\newtheorem{theorem}{Theorem}[section]
\newtheorem*{theorem*}{Theorem}
\newtheorem{lemma}[theorem]{Lemma}
\newtheorem{proposition}[theorem]{Proposition}
\newtheorem{conjecture}[theorem]{Conjecture}
\newtheorem{corollary}[theorem]{Corollary}
\theoremstyle{definition}
\newtheorem{definition}[theorem]{Definition}
\newtheorem{rmk}{Remark}
\newcommand{\E}{\mathbb{E}}
\newcommand{\R}{\mathbb{R}}
\newcommand{\N}{\mathbb{N}}
\newcommand{\Z}{\mathbb{Z}}
\newcommand{\cD}{\mathcal{D}}
\newcommand{\cM}{\mathcal{M}}
\newcommand{\ctM}{\widetilde{\mathcal{M}}}
\newcommand{\ctD}{\widetilde{\mathcal{D}}}
\newcommand{\cR}{\mathcal{R}}
\newcommand{\rh}{\mathfrak{r}}
\renewcommand{\i}{\mathbf{i}}
\newcommand{\ve}{\varepsilon}
\newcommand{\rmU}{\mathrm{U}}
\newcommand{\GT}{\mathbb{GT}}
\newcommand\numberthis{\addtocounter{equation}{1}\tag{\theequation}}
\newcommand{\del}{\partial}
\newcommand{\cyc}{\sum_{\mathrm{cyc}}}
\DeclareRobustCommand{\stirling}{\genfrac\{\}{0pt}{}}
\title{A Quantized Analogue of the Markov-Krein Correspondence}
\author{Gopal K. Goel}
\address{Portland, OR}
\email{gopal.krishna.goel@gmail.com}
\author{Andrew Yao}
\address{Boston, MA}
\email{andrew.j.yao@gmail.com}
\begin{document}

\begin{abstract}
We study a family of measures originating from the signatures of the irreducible components of representations of the unitary group, as the size of the group goes to infinity. Given a random signature $\lambda$ of length $N$ with counting measure $\mathbf{m}$, we obtain a random signature $\mu$ of length $N-1$ through projection onto a unitary group of lower dimension. The signature $\mu$ interlaces with the signature $\lambda$, and we record the data of $\mu,\lambda$ in a random rectangular Young diagram $w$. We show that under a certain set of conditions on $\lambda$, both $\mathbf{m}$ and $w$ converge as $N\to\infty$. We provide an explicit moment generating function relationship between the limiting objects. We further show that the moment generating function relationship induces a bijection between bounded measures and certain continual Young diagrams, which can be viewed as a quantized analogue of the Markov-Krein correspondence. 
\end{abstract}

\maketitle

\section{Introduction}

Given a random matrix $M_N$ whose distribution is invariant under conjugation by unitary matrices, let $\lambda$ be the random vector of its eigenvalues and $\mu = \pi_{N,N-1}\lambda$ be the random vector of the eigenvalues of a principal $(N-1)\times(N-1)$ submatrix of $M_N$. We begin with a folk theorem in random matrix theory, stated here as a conjecture:
\begin{conjecture}
\label{con:MK_intro}
Suppose that we have a sequence of unitarily invariant random matrices $M_N$ such that the spectral measure $\frac{1}{N}\sum_{i=1}^N \delta_{\lambda_i}$ converges weakly in probability to a deterministic measure $\mathbf{m}_{\mathrm{RMT}}$ as $N\to\infty$. Let $\mu=\pi_{N,N-1}\lambda$. The random measure $\sum_{i=1}^N \delta_{\lambda_i} - \sum_{i=1}^{N-1} \delta_{\mu_i}$ converges weakly in probability to a signed measure $\mathbf{d}_{\mathrm{RMT}}$ which is related to $\mathbf{m}_{\mathrm{RMT}}$ by
\begin{equation} \label{eq:MK_intro}
\exp\left(\sum_{k=1}^\infty\frac{z^k}{k}\int_\mathbb{R}x^k \mathbf{d}_{\mathrm{RMT}}(dx)\right)=\sum_{k=0}^\infty z^k\int_\mathbb{R} x^k\mathbf{m}_{\mathrm{RMT}}(dx).
\end{equation}
\end{conjecture}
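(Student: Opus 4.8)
The plan is to reduce the statement to a single algebraic identity relating the characteristic polynomials of $M_N$ and of its principal $(N-1)\times(N-1)$ submatrix, and then to analyze that identity using the fact that the eigenvectors of a unitarily invariant matrix are Haar distributed. Diagonalizing $M_N=U\,\mathrm{diag}(\lambda_1,\dots,\lambda_N)\,U^{*}$ and taking (by unitary invariance, without loss of generality) the deleted index to be $N$, Cramer's rule and the spectral decomposition give two expressions for the $(N,N)$ entry of the resolvent,
\begin{equation*}
\frac{\det(zI-M_{N-1})}{\det(zI-M_N)}=\bigl[(zI-M_N)^{-1}\bigr]_{NN}=\sum_{i=1}^N\frac{q_i}{z-\lambda_i},\qquad q_i:=|U_{Ni}|^2,\quad\sum_{i=1}^N q_i=1 .
\end{equation*}
Replacing $z$ by $1/z$, taking logarithms, and expanding in powers of $z$ turns this into exactly the shape of \eqref{eq:MK_intro}:
\begin{equation*}
\exp\!\left(\sum_{k=1}^\infty\frac{z^k}{k}\Bigl(\sum_{i=1}^N\lambda_i^k-\sum_{j=1}^{N-1}\mu_j^k\Bigr)\right)=\sum_{i=1}^N\frac{q_i}{1-z\lambda_i}=\sum_{k=0}^\infty z^k\int_\R x^k\,d\nu_N(x),
\end{equation*}
where $\nu_N:=\sum_{i=1}^N q_i\,\delta_{\lambda_i}$ is a random probability measure. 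So everything reduces to showing that $\nu_N\to\mathbf{m}_{\mathrm{RMT}}$ weakly in probability; the identity then passes to the limit and forces the limit $\mathbf{d}_{\mathrm{RMT}}$ of $\mathbf{d}_N:=\sum_i\delta_{\lambda_i}-\sum_j\delta_{\mu_j}$ to satisfy \eqref{eq:MK_intro}.

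Next I would bring in unitary invariance: $U$ is Haar distributed and independent of $(\lambda_i)$, so the weight vector $(q_1,\dots,q_N)$ is a Dirichlet point on the simplex independent of the eigenvalues. In particular $\E[\nu_N\mid\lambda]=\tfrac1N\sum_i\delta_{\lambda_i}$, which converges weakly in probability to $\mathbf{m}_{\mathrm{RMT}}$ by hypothesis, while the estimates $\mathrm{Var}(q_i)=O(N^{-2})$ and $\mathrm{Cov}(q_i,q_j)=O(N^{-3})$ give $\mathrm{Var}\bigl(\int f\,d\nu_N\mid\lambda\bigr)=O(N^{-1})$ for bounded $f$. Combining the two facts shows $\int f\,d\nu_N\to\int f\,d\mathbf{m}_{\mathrm{RMT}}$ in probability for every bounded continuous $f$, that is, $\nu_N\to\mathbf{m}_{\mathrm{RMT}}$ weakly in probability.

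To transfer this to $\mathbf{d}_N$, I would first note that expanding the logarithm of the second displayed identity writes the moment $\int x^k\,d\mathbf{d}_N$ as a fixed universal polynomial in $\int x\,d\nu_N,\dots,\int x^k\,d\nu_N$ (a Newton-type recursion), so every moment of $\mathbf{d}_N$ converges in probability to the corresponding moment of the measure $\mathbf{d}_{\mathrm{RMT}}$ determined by \eqref{eq:MK_intro}. For the upgrade to weak convergence I would exploit the defining feature of interlacing: the cumulative distribution function $x\mapsto\#\{i:\lambda_i\le x\}-\#\{j:\mu_j\le x\}$ of $\mathbf{d}_N$ takes values only in $\{0,1\}$ --- equivalently, $\mathbf{d}_N$ is the distributional derivative of the indicator of a union of intervals. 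This simultaneously gives the tightness needed (a uniform bound on $\int f\,d\mathbf{d}_N$ over $C^1$ test functions) and keeps the limiting primitive bounded, and combined with the fact that a compactly supported measure is determined by its moments it identifies the weak-$*$ limit as $\mathbf{d}_{\mathrm{RMT}}$, with convergence in probability.

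The part I expect to be the genuine obstacle is this last step without an a priori compactness assumption. The stated hypotheses do not force the $\lambda_i$ into a fixed bounded interval, nor even guarantee that $\mathbf{m}_{\mathrm{RMT}}$ has finite moments, and both the conditional-variance estimate for $\nu_N$ and the tightness of $\mathbf{d}_N$ rely on such control; moreover a single extreme eigenvalue $\lambda_1$ enters $\mathbf{d}_N$ as an essentially isolated term and the generating function as a near-pole, so one must argue carefully that stray eigenvalues neither spoil the convergence nor prevent $\mathbf{d}_{\mathrm{RMT}}$ from being a genuine finite signed measure. Making this rigorous --- rather than the algebra or the eigenvector concentration, both of which are routine --- is the delicate point, and is presumably why the assertion is recorded here only as a conjecture.
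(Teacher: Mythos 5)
The paper itself does not prove this statement: it is recorded only as Conjecture~\ref{con:MK_intro}, and the authors note that a proof is ``unavailable in the literature'' apart from the Wigner/Wishart special cases in \cite{Bu}. The route the paper \emph{suggests} (at the end of \Cref{ssec:random_signatures}) is to adapt the proof of \Cref{thm:DMK}: replace Schur generating functions by multivariate Bessel (Harish-Chandra) generating functions, act with differential operators having those as eigenfunctions, extract moment formulas as contour integrals, and pass to the limit --- a Fourier-analytic argument in the spirit of \cite{BuG1}. Your route is genuinely different and in some ways cleaner: you write the exact finite-$N$ identity
\[
\exp\!\left(\sum_{k\ge1}\frac{z^k}{k}\Bigl(\sum_i\lambda_i^k-\sum_j\mu_j^k\Bigr)\right)=\sum_{k\ge0}z^k\int x^k\,d\nu_N,\qquad \nu_N=\sum_i q_i\delta_{\lambda_i},
\]
via Cramer's rule for $[(zI-M_N)^{-1}]_{NN}$, so the Markov--Krein shape of \eqref{eq:MK_intro} is literally true before any limit is taken, with $\nu_N$ playing the role of $\mathbf{m}_{\mathrm{RMT}}$. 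Unitary invariance then reduces the whole problem to showing $\nu_N\to\mathbf{m}_{\mathrm{RMT}}$, which you do by noting that $(q_1,\dots,q_N)$ is an independent Dirichlet$(1,\dots,1)$ weight vector and running a conditional variance bound. This localizes all the probabilistic work into one concentration estimate, whereas the paper's approach distributes the work over the asymptotic commutation of the $\cD_{N,k}$-analogues. What the paper's method buys in exchange is uniformity: the Bessel-generating-function framework comes with a built-in class of ``appropriate'' ensembles (the analogue of \Cref{def:LLNA}) for which the moment expansions converge, sidestepping the tightness issue you flag.

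That issue is real and you have located it correctly. As stated, the hypothesis is only weak convergence of the empirical spectral measure, which does not give convergence of moments of $\nu_N$, nor does it even guarantee that the right side of \eqref{eq:MK_intro} is well defined. Your identity expresses $\int x^k\,d\mathbf{d}_N$ as a polynomial in the moments of $\nu_N$, so you need those moments to converge in probability, and the conditional variance bound $O(1/N)$ is proved only for bounded test functions. Without a compact-support or uniform-integrability hypothesis on the spectra, both steps fail, and a single runaway eigenvalue does exactly the damage you describe. One additional small point: the eigenvector/eigenvalue independence and the Haar distribution of $U$ require the ensemble to assign zero mass to matrices with repeated eigenvalues (or at least to have a conjugation-invariant density), which should be added to the hypotheses. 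Modulo those two additions --- a moment/tightness condition and an absolute-continuity condition --- your argument is a correct and more elementary proof strategy than the one the paper gestures at, and your diagnosis of why the statement remains a conjecture is accurate.
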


\begin{figure}[ht]
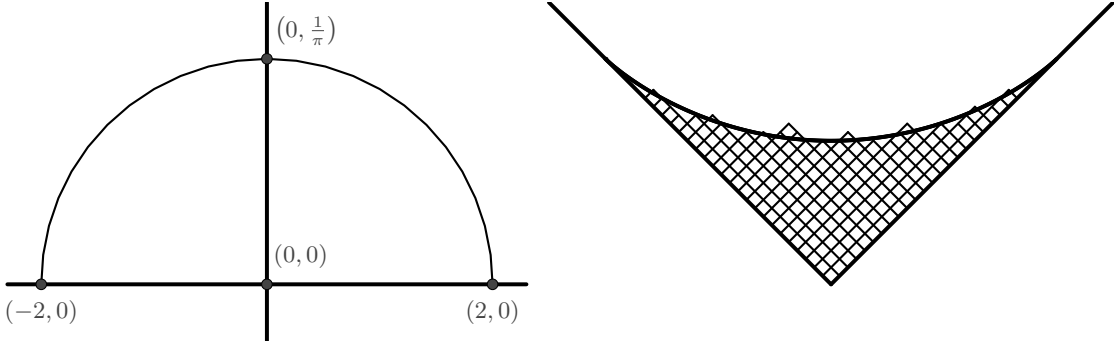

\subfile{SemicircleVKLSCurve}
\caption{Scaled Semicircle Law (left) and Vershik-Kerov-Logan-Shepp Curve, the limit of the Plancherel measure on Young diagrams (right)}
\label{fig:semicircle}
\end{figure}

Furthermore, the above relation is directly related to the Markov-Krein correspondence, which is a bijection between probability measures $\mathbf{m}$ and certain continual Young diagrams, observed by Kerov \cite{KEROV}. Here, $\mathbf{d}$ is the second derivative of the continual Young diagram, see also \cite{KREIN} and Theorem \ref{thm:BMK} in this text. A fascinating instance of this bijection, discovered by Kerov \cite{KEROV}, is where $\mathbf{m}$ is given by the semicircle law and $\mathbf{d}$ is the second derivative of the Vershik-Kerov-Logan-Shepp (VKLS) curve. We note that the semicircle law is the limiting spectral law of the Gaussian Unitary Ensemble (GUE) and more generally of Wigner matrices, see \cite{AGZ}*{Section 2.1}. On the other hand, the VKLS curve arises as the limiting diagram of the Plancherel measure on Young diagrams, see \cite{VK}, \cite{LOGAN}, and \Cref{fig:semicircle}.

Though a proof of \Cref{con:MK_intro} is unavailable in the literature, \cite{Bu} proved the special case where $M_N$ is an $N\times N$ GUE matrix, and more generally a Wigner matrix. Due to the lack of unitary invariance for general Wigner matrices, this suggests that the assumption of unitary invariance may be relaxed, though it is not clear how much one can relax the hypothesis.

The main result of this article establishes a quantized analogue of \Cref{con:MK_intro} where matrices are replaced with representations, and eigenvalues with signatures. Moreover, we find a quantized version of the Markov-Krein correspondence which is directly linked to our main result. We now introduce the quantized setting and our main result.

Let $\rmU(N)$ denote the $N$-dimensional unitary group. Recall that the irreducible representations of $\rmU(N)$ are in bijection with the set $\GT_N$ of $N$-tuples of non-increasing integers $\lambda = (\lambda_1, \ldots,\lambda_N)$ called \emph{signatures of length $N$}. Let $V_N^\lambda$ denote the irreducible representation corresponding to $\lambda \in \GT_N$. Given an arbitrary finite-dimensional representation $V_N$ of $\rmU(N)$, we define a probability measure $\rho[V_N]$ on $\GT_N$ where
\[ \rho[V_N](\lambda) = \frac{m_\lambda\dim V_N^\lambda}{\dim V_N}, \quad V_N = \bigoplus_{\lambda \in \GT_N} m_\lambda V_N^\lambda. \]
In other words, the probability weight of $\lambda$ is proportional to the dimension of the isotypic component corresponding to $\lambda$. For $\lambda \sim \rho[V_N]$, we define $\pi_{N,N-1}\lambda$ to be the random element of $\GT_{N-1}$ such that the probability distribution of $\pi_{N,N-1} \lambda$ given $\lambda$ is $\rho[V_N^\lambda|_{\rmU(N-1)}]$. In particular, the marginal distribution of $\pi_{N,N-1}\lambda$ is given by $\rho[V_N|_{\rmU(N-1)}]$.

Our main result is:
\begin{theorem}
\label{thm:DMK_intro}
Suppose that we have a sequence of representations $V_N$ of $\rmU(N)$ with $\lambda\sim\rho[V_N]$ such that distributions $\{\rho[V_N]\}_{N\ge 1}$ satisfy the technical assumption in Definition \ref{def:LLNA}. Then, the counting measure $\frac{1}{N}\sum_{i=1}^N \delta_{\lambda_i+N-i}$ converges weakly in probability to a deterministic measure $\mathbf{m}$ as $N\to\infty$, and the random measure $\sum_{i=1}^N \delta_{\lambda_i+N-i} - \sum_{i=1}^{N-1} \delta_{\mu_i+N-1-i}$, where $\mu=\pi_{N,N-1}\lambda$, converges weakly in probability to a signed measure $\mathbf{d}$ which are related by
\begin{equation} \label{eq:QMK_intro}
\exp\left(\sum_{k=1}^\infty\frac{z^k}{k}\int_\mathbb{R}x^k \mathbf{d}(dx)\right)=\frac{1}{z}\left(-1+\exp\left(z\sum_{k=0}^\infty z^k\int_\mathbb{R}x^k \mathbf{m}(dx)\right)\right).
\end{equation}
\end{theorem}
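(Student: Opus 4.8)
The plan is to prove convergence of the counting measures and the moment-generating-function identity \eqref{eq:QMK_intro} through the method of moments, exploiting the combinatorial structure of characters of $\rmU(N)$. The key object is the normalized character $\chi^\lambda(u)/\dim V_N^\lambda$, which for the representation-theoretic measures $\rho[V_N]$ satisfies a natural multiplicativity: the Fourier-type transform of $\rho[V_N]$ factors through the character of $V_N$ itself. First I would introduce the moment generating function $G_\lambda(z) = \sum_{k\ge 0} z^k \int x^k \mathbf m_\lambda(dx)$ of the (random) counting measure $\mathbf m_\lambda = \frac1N \sum_i \delta_{\lambda_i + N - i}$, and likewise the cumulant-type generating function $H(z) = \exp\left(\sum_{k\ge 1} \frac{z^k}{k}\int x^k\, \mathbf d(dx)\right)$ for the difference measure. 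The identity \eqref{eq:QMK_intro} to be proven is exactly $H(z) = \frac1z\left(-1 + \exp(z\, G(z))\right)$ in the limit, where $G, H$ are the deterministic limits of $G_\lambda$ and its one-step analogue. So the real content is: (i) the limits exist, (ii) they satisfy this algebraic relation.

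The main technical engine should be the formula for the normalized restricted character, equivalently the branching rule $V_N^\lambda|_{\rmU(N-1)} = \bigoplus_{\mu \prec \lambda} V_{N-1}^\mu$ where $\mu$ interlaces $\lambda$. This means $\pi_{N,N-1}\lambda$ is distributed on the interlacing polytope with weights proportional to $\dim V_{N-1}^\mu = \prod_{i<j}\frac{\mu_i - \mu_j + j - i}{j - i}$. I would compute $\mathbb E\left[\sum_i x^k \mathbf d\text{-increments} \,\middle|\, \lambda\right]$, i.e., $\mathbb E\left[\sum_{i=1}^N (\lambda_i + N - i)^k - \sum_{i=1}^{N-1}(\mu_i + N - 1 - i)^k \,\middle|\, \lambda\right]$, using known expressions for the expected power sums of $\mu$ given $\lambda$ under the branching weights — these are classical and can be written via contour integrals involving $\prod_j \frac{z - (\lambda_j + N - j)}{\cdots}$ or via the logarithmic derivative of the character. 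The crucial algebraic fact I expect to need is that the generating function of these conditional expectations, when exponentiated appropriately, telescopes into $\frac1z(-1 + \exp(z G_\lambda(z)))$ up to lower-order fluctuations — this is the "quantized" shadow of the fact that in the matrix/Markov–Krein case one gets $\exp(z G)$ directly. The exponential-minus-one and the extra factor of $z$ should emerge from the discreteness: summing $\delta_{\lambda_i + N - i}$ over integer-shifted points, versus the continuum, is governed by the difference operator $e^{z\partial} - 1$ rather than $z\partial$.

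Concretely, the steps in order: (1) Recall from Definition \ref{def:LLNA} (the technical assumption) that it guarantees a law of large numbers — the random $G_\lambda(z)$ concentrates around a deterministic limit $G(z)$, holomorphic in a neighborhood of $0$; this is the input that lets me pass from conditional expectations to deterministic limits and replace products of expectations by expectations of products. (2) Write the exact identity, for fixed $N$ and $\lambda$, expressing $\mathbb E[\text{difference-measure power sums} \mid \lambda]$ in terms of the $\lambda_i + N - i$; this should follow from a residue computation on the branching weights or from differentiating the known formula for $s_\mu$ summed over the interlacing polytope. (3) Take $N \to \infty$, using concentration to get the deterministic relation between the generating functions; carefully track that the $\frac1N$ normalization in $\mathbf m_\lambda$ versus the un-normalized difference measure $\mathbf d$ is consistent with the claimed identity (note $\mathbf d$ is not normalized by $N$, reflecting that the difference has "total mass" $1$). (4) Invert the moment problem: show that the limiting $\mathbf m$ is supported in a compact set (from the assumption) so that the moment sequences determine the measures, upgrading the generating-function identity to the measure-level statement and giving weak convergence in probability.

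**Main obstacle.** The hardest part will be step (2) — producing the exact finite-$N$ identity relating the conditional power sums of the difference measure to those of $\mathbf m_\lambda$ in a form that exponentiates cleanly. The naive computation of $\mathbb E[\sum(\mu_i + N - 1 - i)^k \mid \lambda]$ gives a messy alternating sum over the interlacing constraints; the art is to recognize it as coming from $\log$ of a ratio of characters (or equivalently of the function $\prod_j (z - a_j)$ with $a_j = \lambda_j + N - j$) and to see that the combination appearing in the exponent on the left of \eqref{eq:QMK_intro} is precisely $\log$ of $\frac1z(\exp(z G_\lambda(z)) - 1)$ plus an error that vanishes in probability under the LLN assumption. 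I expect this to hinge on a generating-function identity of the shape $\prod_{i=1}^N \frac{z - a_i + 1}{z - a_i}$-type telescoping, or on the known asymptotics of normalized characters of $\rmU(N)$ (the "$q=1$" specialization appearing in work of Bufetov–Gorin and Borodin–Bufetov–Olshanski), which I would cite rather than reprove. Once that identity is in hand, the passage to the limit and the moment-problem inversion are routine given the standing compactness assumption.
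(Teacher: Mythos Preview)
Your plan and the paper's share the method-of-moments skeleton but diverge at the crucial technical step, and the divergence lands exactly on your self-identified ``main obstacle.'' You propose to condition on $\lambda$, compute $\E\bigl[\sum_i(\mu_i+N-1-i)^k\,\big|\,\lambda\bigr]$ from the interlacing branching weights, and hope the resulting generating function exponentiates to $\tfrac{1}{z}(e^{zG_\lambda(z)}-1)$ up to vanishing error. The paper never conditions on $\lambda$. It works on the Fourier side throughout: the Schur generating function $S_{\rho_N}$ is acted on by differential operators $\cD_{N,k}$ whose eigenfunctions are Schur functions, and the projection $\pi_{N,N-1}$ enters only as ``set $x_N=1$ in $S_{\rho_N}$.'' Joint moments of $m[\rho_N]$ and $d[\rho_N]$ are then $\cD_{N,k}^n S_{\rho_N}\big|_{x_i=1}$ and combinations of $\cD_{N,k},\cD_{N-1,k}$ applied to $S_{\rho_N}$; LLN-appropriateness enters not as concentration of $G_\lambda$ but as the existence of a single holomorphic $H$ with $\del_i\log S_{\rho_N}\sim N H'(x_i)$. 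Both limiting moment sequences emerge as explicit residues in $H'$---$m_k$ and $d_k$ are the residues at $w=1$ of $\tfrac{1}{(k+1)w}\bigl(wH'(w)+\tfrac{w}{w-1}\bigr)^{k+1}$ and $\tfrac{1}{w-1}\bigl(wH'(w)+\tfrac{w}{w-1}\bigr)^{k}$---and \eqref{eq:QMK_intro} then falls out of the change of variable $w=e^{yF(y)}$ with $F(y)=\sum_k m_ky^k$.

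Your step (2) is a genuine gap, not just a hard part deferred. You have not exhibited the finite-$N$ conditional identity you need, and the telescoping-product mechanism you speculate about ($\prod_i\tfrac{z-a_i+1}{z-a_i}$-type) is not what drives the paper's argument; whether a clean conditional formula of that shape exists for the $\dim$-weighted discrete interlacing is itself a nontrivial question your plan leaves open. The paper bypasses this entirely: its insight is that the LLN-appropriate function $H$ is the correct intermediate object, parametrizing $m_k$ and $d_k$ by nearly identical combinatorial sums (they differ only by $\tfrac{1}{(\ell+1)!}$ versus $\tfrac{1}{\ell!}$ in one place), so that \eqref{eq:QMK_intro} reduces to a short contour manipulation rather than a conditional-expectation computation. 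If you were to push your route through and then average your conditional identity over $\lambda\sim\rho_N$, you would be forced back to the SGF and its operator calculus anyway---which is where the paper begins.
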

Furthermore, we show that the above relation defines a bijection between probability measures with density bounded by $1$ and a certain subclass of continual Young diagrams, where $\mathbf{m}$ is the bounded measure, and $\mathbf{d}$ can be viewed as the second derivative of the continual Young diagram. An exact statement is given in Theorem \ref{thm:BQMK}. This bijection is a quantized analogue of the Markov-Krein correspondence. Moreover, the Markov-Krein correspondence can be obtained from our quantized correspondence through a semiclassical limit, see Section \ref{ssec:semiclassical} for details.

Just as the semicircle law and the VKLS curve are linked by the Markov-Krein correspondence, we similarly find that the one-sided Plancherel measures are linked to the VKLS curve by our quantized correspondence, see \Cref{sec:appendixb}. Here, the one-sided Plancherel measures are a one-parameter family of distributions describing $N\to\infty$ limits of certain characters of $\rmU(N)$, see \cite{BBO}.

We expect that \Cref{con:MK_intro} may be proved by adapting the proof of \Cref{thm:DMK_intro}. Since the focus of this article is on the quantized setting, we do not pursue this direction, though we point out the necessary adaptations, see the end of \Cref{ssec:random_signatures}.

The main tool we use is the Fourier transform on representations of the unitary group and differential operators whose eigenfunctions are given by Schur functions. The action of these operators on the Fourier transform yield combinatorial expressions for the moments of the measures $\rho[V_N]$ and $\rho[V_N^\lambda|_{\mathrm{U}(N-1)}]$, from which we can study their behavior in the large $N$ limit. The use of the Fourier transform to study large $N$ limits of representations of the unitary group was pioneered by Bufetov and Gorin in \cite{BuG1}, where they studied limit shapes for the classical Lie groups. Their methods were further developed in \cites{BuG2,BuG3} to study global fluctuations for discrete particle systems, encompassing a variety of applications including large limits of lozenge tilings, domino tilings, and representations of the unitary group. Related methods were also recently used to obtain large $N$ local asymptotics of measures $\rho[V_N](\lambda)$ \cite{AHN}. For our purposes, we require the expansion of the moments of $\rho[V_N]$ to subleading terms. A critical property that we use for our analysis is that the differential operators commute with each other asymptotically relative to the order of the limit.

Using this approach, it should be possible to further refine our results to study the global fluctuations of the signed measure $\sum_{i=1}^N \delta_{\lambda_i+N-i} - \sum_{i=1}^{N-1} \delta_{\mu_i+N-1-i}$, though we do not pursue this here. We note that the global fluctuations of the random matrix analogue of these signed measures were studied in \cite{ERDOS} for Wigner matrices, and identified with a derivative of the Gaussian free field --- a $2$ dimensional conformally invariant, universal random distribution. Similar results were also established for the $\beta$-Jacobi corners process \cite{GZ} and the $\beta$-Hermite ensemble \cite{AG}.

The article is organized as follows. In \Cref{sec:main results}, we set up and state our main results, \Cref{thm:DMK}, a technical modification of \Cref{thm:DMK_intro}, and \Cref{thm:BQMK}, the quantized Markov-Krein bijection, as well as touch upon their respective random matrix analogues. In \Cref{sec:momlim}, we set up and prove \Cref{thm:momreg,thm:momdiff}, which explicitly calculate the moments referenced in \Cref{thm:DMK}. In \Cref{sec:theproof}, we use \Cref{thm:momreg,thm:momdiff} to prove \Cref{thm:DMK}. In \Cref{sec:bijection_proof}, we prove \Cref{thm:BQMK} and show how we can extract the Markov-Krein correspondence through a semiclassical limit. Finally, in \Cref{sec:appendixb}, we give an example of objects paired by the quantized Markov-Krein correspondence.

\textbf{Acknowledgements}. We would like to thank our mentor Andrew J. Ahn for guiding us throughout this project. Also, we would like to thank Vadim Gorin for suggesting the project and providing comments.

\section{Main Results and Random Matrix Analogues}
\label{sec:main results}

There are two main results we prove. The first is a technical restatement of Theorem \ref{thm:DMK_intro}. The second main result establishes that the relation \eqref{eq:QMK_intro} gives a bijective correspondence between a class of continual Young diagrams with second derivative $\mathbf{d}$ and probability measures $\mathbf{m}$. For further motivation, we also present random matrix analogues of these results.

\subsection{Asymptotics of Random Signatures} \label{ssec:random_signatures}

An $N$-tuple of non-increasing positive integers $\lambda=(\lambda_1\ge\lambda_2\ge\cdots\ge\lambda_N)$ is called a \textit{signature} of length $N$, and we denote by $\GT_N$ the set of all signatures of length $N$. The \textit{Schur function} $s_\lambda$, for $\lambda\in\GT_N$, is a symmetric rational function defined by
\[s_\lambda(x_1,\ldots,x_N)=\frac{\det\left[x_i^{\lambda_j+N-j}\right]_{i,j=1}^N}{\prod_{i<j}(x_i-x_j)}.\]
Let $\rh$ be a probability measure on the set $\GT_N$. The \textit{Schur generating function} $S_\rh(x_1,\ldots,x_N)$ of $\rh$ is a symmetric Laurent power series in $x_1,\ldots,x_N$ given by
\[S_\rh(x_1,\ldots,x_N)=\sum_{\lambda\in\GT_N}\rh(\lambda)\frac{s_\lambda(x_1,\ldots,x_N)}{s_\lambda(1^N)}.\]
In general, we will assume that the measure $\rh$ is such that this sum is uniformly convergent in an open neighborhood of $(1^N)$.

A key object of study is the \textit{projection map} $\pi_{N,N-1}$, depending on context either a map from random variables on $\GT_N$ to random variables on $\GT_{N-1}$, or a map from probability distributions on $\GT_N$ to probability distributions on $\GT_{N-1}$.
\begin{definition}
\label{def:proj}
For any fixed $\lambda\in\GT_N$, let $V_N^\lambda$ denote the corresponding irreducible representation of $\rmU(N)$. Suppose that its restriction onto $\mathrm{U}(N-1)$ decomposes into irreducible representations as
\[V_N^\lambda|_{\rmU(N-1)} = \bigoplus_{\mu \in \GT_{N-1}} m_\mu V_{N-1}^\mu. \]
Now, if $\lambda$ is a random variable, then $\pi_{N,N-1}\lambda$ is the random element of $\GT_{N-1}$ such that
\[ \mathbb{P}(\pi_{N,N-1}\lambda = \mu |\lambda) = \frac{m_\mu\dim(V_{N-1}^\mu)}{\dim (V_N^\lambda|_{\rmU(N-1)})}. \]
\end{definition}

Our goal is to study asymptotics of counting measures of random signatures. To this end, for any $\lambda\in\GT_N$, we define the counting measure
\[m[\lambda]:=\frac{1}{N}\sum_{i=1}^N\delta\left(\frac{\lambda_i+N-i}{N}\right).\]
Furthermore, for any $\lambda\in\GT_N$ and $\mu\in\GT_{N-1}$, we define
\[d[\lambda,\mu]:=\sum_{i=1}^N\delta\left(\frac{\lambda_i+N-i}{N}\right)-\sum_{i=1}^{N-1}\delta\left(\frac{\mu_i+N-1-i}{N}\right).\]
The pushforward of a measure $\rh$ on $\GT_N$ with respect to the map $\lambda\to m[\lambda]$ defines a random measure on $\mathbb{R}$ which we denote by $m[\rh]$. Furthermore, the random measure $d[\rh]$ on $\mathbb{R}$ is defined as $d[\lambda,\mu]$ where $\lambda$ is distributed according to $\rh$ and $\mu$ is distributed according to $\pi_{N,N-1}\lambda$.

We will be interested in asymptotics as $N\to\infty$, so we have the following definition.

\begin{definition}[\cite{BuG2}*{Definition 2.1}]
\label{def:LLNA}
A sequence of symmetric functions $\{F_N(x_1,\ldots,x_N)\}_{N\ge 1}$ is called \textbf{LLN-appropriate} if there exists a collection of reals $\{c_k\}_{k\ge 1}$ such that
\begin{itemize}
    \item For any $N$, the function $\log F_N(x_1,\ldots,x_N)$ is holomorphic in an open complex neighborhood of $(1^N)$.
    \item For any index $j$ and any $k\in\mathbb{N}$ we have
    \[\lim_{N\to\infty}\left.\frac{\del_j^k\log F_N(x_1,\ldots,x_N)}{N}\right|_{x_i=1}=c_k.\]
    \item For any $s\in\mathbb{N}$ and any indices $i_1,\ldots,i_s$ such at least two are distinct, we have
    \[\lim_{N\to\infty}\left.\frac{\del_{i_1}\cdots\del_{i_s}\log F_N(x_1,\ldots,x_N)}{N}\right|_{x_i=1}=0.\]
    \item The power series
    \[\sum_{k=1}^\infty\frac{c_k}{(k-1)!}(x-1)^{k-1}\]
    converges in a neighborhood of unity.
\end{itemize}
Now, a sequence $\rho=\{\rho_N\}_{N\ge 1}$, where $\rho_N$ is a probability measure on $\GT_N$, is called \textbf{LLN-appropriate} if the sequence $\{S_{\rho_N}\}_{N\ge 1}$ of its Schur generating functions is LLN-appropriate. For this sequence, we let $H_\rho(x)$ be a holomorphic function in a neighborhood of unity such that
\[H_\rho'(x) = \sum_{k=1}^\infty\frac{c_k}{(k-1)!}(x-1)^{k-1}.\]
\end{definition}

We have the following technical restatement of Theorem \ref{thm:DMK_intro}, which is what we will refer to for the rest of the paper.
\begin{theorem}
\label{thm:DMK}
Suppose that a sequence of probability measures $\rho=\{\rho_N\}_{N\ge 1}$, where $\rho_N$ is a probability measure on $\GT_N$, is LLN-appropriate. Then the random measures $m[\rho_N]$ and $d[\rho_N]$ converge as $N\to\infty$ in probability, in the sense of moments to \textit{deterministic} measures $\mathbf{m}$ and $\mathbf{d}$ on $\mathbb{R}$ respectively. Furthermore, their moment generating functions are related by
\begin{equation}
\label{eq:DMK}
\exp\left(\sum_{k=1}^\infty\frac{z^k}{k}\int_\mathbb{R}x^k \mathbf{d}(dx)\right)=\frac{1}{z}\left(-1+\exp\left(z\sum_{k=0}^\infty z^k\int_\mathbb{R}x^k \mathbf{m}(dx)\right)\right).
\end{equation}
\end{theorem}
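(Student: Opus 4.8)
The plan is to follow the Bufetov--Gorin machinery \cite{BuG2} for extracting limit shapes from Schur generating functions, but carried out to one more order of accuracy than is usually needed, since $\mathbf{d}$ records a subleading piece of the signature data. First I would recall that for an LLN-appropriate sequence $\rho=\{\rho_N\}$, the normalized power-sum observables $p_k[\lambda]=\sum_i\left((\lambda_i+N-i)/N\right)^k$ can be written as differential operators applied to $S_{\rho_N}$ and evaluated at $(1^N)$: concretely, operators built from $\cD_k=\frac{1}{N^k}\left(x_i\del_i\right)^k$-type expressions symmetrized over the variables, whose eigenfunctions are the Schur functions. The key structural input, emphasized in the introduction, is that these operators commute with each other to leading order; the higher-order commutator corrections are exactly what feed into the $\mathbf{d}$-moments. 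So Step 1 is to set up the operator formalism and state precisely (citing \Cref{thm:momreg,thm:momdiff}, which are promised to do this) that
\[
\lim_{N\to\infty}\frac{1}{N}\E_{\rho_N} p_k[\lambda] = \int_\R x^k\,\mathbf{m}(dx)
\]
with an explicit formula for the right side in terms of $H_\rho$, and similarly that the moments of $d[\rho_N]$ converge to explicit expressions in $H_\rho$.

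Step 2 is the concentration argument: to upgrade ``convergence of expected moments'' to ``convergence in probability in the sense of moments'', I would compute the second moments (covariances) of $p_k[\lambda]$ and of the linear statistics against $d[\lambda,\mu]$, again via the differential-operator calculus, and show these covariances vanish in the limit after the appropriate normalization. This is the standard LLN-type argument in \cite{BuG2}: applying two observable operators and using that the cross terms are of lower order because of the vanishing of mixed derivatives $\del_{i_1}\cdots\del_{i_s}\log F_N/N\to 0$ in Definition \ref{def:LLNA}. For the $d[\rho_N]$ part one must also handle the conditional law $\pi_{N,N-1}\lambda$: here I would use that the Schur generating function of $\rho[V_N^\lambda|_{\rmU(N-1)}]$ is obtained from $s_\lambda$ by a branching/specialization identity, so that the conditional moments of $\mu$ given $\lambda$ are again differential operators applied to an explicit generating function, and the tower structure lets the same concentration estimates go through.

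Step 3 is the identification of the moment generating functions, i.e.\ deriving \eqref{eq:DMK}. From \Cref{thm:momreg} one gets, in the spirit of the Bufetov--Gorin ``quantized free probability'' formula, an identity of the form
\[
\sum_{k=0}^\infty z^k\int_\R x^k\,\mathbf{m}(dx) = \frac{1}{z}\log\!\big(\text{something involving }H_\rho\big)
\]
or equivalently an inverse-function (Markov--Krein-type) relation between the Cauchy/moment transform of $\mathbf{m}$ and $H_\rho$; and from \Cref{thm:momdiff} one gets that $\sum_{k\ge1}\frac{z^k}{k}\int x^k\mathbf{d}(dx)$ equals a closely related expression in $H_\rho$ — morally $H_\rho$ itself, since $\mathbf{d}$ is the ``first-order defect'' and $H_\rho'$ is precisely the generating function of the $c_k$. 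Eliminating $H_\rho$ between the two formulas should yield exactly \eqref{eq:DMK}. The bookkeeping here is the part I would be most careful about: tracking the shift by $N-i$ versus $N-1-i$ in the definitions of $m[\lambda]$ and $d[\lambda,\mu]$, getting the constant term ($k=0$) in the $\mathbf{m}$-series right (it should be $1$, reflecting that $\mathbf{m}$ is a probability measure), and matching the normalization $\frac{1}{k}$ versus $\frac{1}{(k-1)!}$ conventions.

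The main obstacle I anticipate is Step 1 carried to the needed precision: the operators do \emph{not} exactly commute, and $\mathbf{d}$ lives precisely at the scale of the commutator corrections, so one cannot simply quote the leading-order limit-shape result as a black box. One has to expand $\cD_{k_1}\cD_{k_2}S_{\rho_N}$ (and the branching operator) one order beyond the usual LLN analysis, control the error terms uniformly in a complex neighborhood of $(1^N)$ using holomorphy from Definition \ref{def:LLNA}, and check that the subleading contribution assembles into the clean exponential-generating-function form on the right of \eqref{eq:DMK}. This is presumably the content of \Cref{thm:momdiff}, so in the present section the proof of \Cref{thm:DMK} reduces to: invoke \Cref{thm:momreg,thm:momdiff} for the expectations, run the covariance computation for concentration, and perform the generating-function algebra to pass from the explicit $H_\rho$-formulas to \eqref{eq:DMK}.
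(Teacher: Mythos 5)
Your high-level skeleton is correct, and Steps 1 and 2 are in fact already entirely absorbed into the statements of \Cref{thm:momreg,thm:momdiff}: those theorems assert convergence in probability in the sense of moments, not merely convergence of expectations, so the covariance/concentration computation you describe in Step 2 is not something the proof of \Cref{thm:DMK} still has to do separately. What remains is only Step 3, and that is where your proposal has a genuine gap.

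You say that eliminating $H_\rho$ between the two explicit moment formulas ``should yield'' \eqref{eq:DMK}, and you guess that $\sum_{k\ge 1}\frac{z^k}{k}d_k$ is ``morally $H_\rho$ itself.'' Neither is accurate as stated, and the elimination is not a routine bookkeeping exercise. The formulas from \Cref{thm:momreg,thm:momdiff},
\[
m_k=\left.\sum_{\ell=0}^k\frac{1}{(\ell+1)!}\binom{k}{\ell}\Big(\tfrac{\del}{\del z}\Big)^\ell\!\big(z^k H'(z)^{k-\ell}\big)\right|_{z=1},
\qquad
d_k=\left.\sum_{\ell=0}^k\frac{1}{\ell!}\binom{k}{\ell}\Big(\tfrac{\del}{\del z}\Big)^\ell\!\big(z^k H'(z)^{k-\ell}\big)\right|_{z=1},
\]
do not directly expose a functional relation between $F(z)=\sum m_k z^k$ and $G(z)=\sum d_k z^k/k$. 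The missing idea in the paper's proof is to recognize both $m_k$ and $d_k$ as residues: $m_k=\frac{1}{k+1}\oint_1\frac{1}{w}\big(wH'(w)+\frac{w}{w-1}\big)^{k+1}\frac{dw}{2\pi\i}$ and $d_k=\oint_1\frac{1}{w-1}\big(wH'(w)+\frac{w}{w-1}\big)^{k}\frac{dw}{2\pi\i}$, then change variables via $y(w)=\big(wH'(w)+\frac{w}{w-1}\big)^{-1}$. The crucial lemma (\Cref{lem:inverse}) is a Lagrange-inversion observation: this $y(w)$ is locally invertible near $w=1$ with inverse $w=e^{yF(y)}$, i.e.\ the moment generating series $F$ of $\mathbf m$ \emph{is} the compositional inverse, not $H_\rho$. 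Substituting into the $d_k$ contour integral and applying Cauchy's differentiation formula gives $1+zG'(z)=\frac{ze^{zF(z)}}{e^{zF(z)}-1}\big(F(z)+zF'(z)\big)$, which integrates to $e^{G(z)}=\frac{1}{z}\big(e^{zF(z)}-1\big)$, i.e.\ \eqref{eq:DMK}. Without this residue-plus-inversion mechanism the generating-function algebra you sketch does not close, so you should replace the vague ``eliminate $H_\rho$'' step with this explicit argument.
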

Here are two applications demonstrating that the natural operations of tensor products and projections give LLN-appropriate sequences, meaning Theorem \ref{thm:DMK} applies to them. We adopt the notion below from \cite{BuG1}.
\begin{definition}[\cite{BuG1}*{Definition 2.5}] \label{def:reg}
A sequence $\lambda(N)$ of signatures is called \textit{regular} if there is a piecewise continuous function $f(t)$ and a constant $C$ such that
\[\lim_{N\to\infty}\sum_{j=1}^N\left|\frac{\lambda_j(N)}{N}-f(j/N)\right|=0\]
and
\[\left|\frac{\lambda_j(N)}{N}-f(j/N)\right|<C\]
for all $j=1,\ldots,N$ and $N=1,2,\ldots$.
\end{definition}
For a sequence of representations $V_N$, recall the measure $\rho[V_N]$ on $\GT_N$ defined in the introduction. Note that the Schur generating function of $\rho[V_N]$ is the normalized character of $V_N$, see the proof of \Cref{lem:ESGF} for justification.

The following result is about tensor products of irreducible representations.
\begin{corollary}
\label{thm:tensor}
Suppose $\lambda^{(1)}(N),\ldots,\lambda^{(r)}(N)$ are regular sequences of signatures. Let $V_N$ be the representation of $\rmU(N)$ given by
\[V_N=V_N^{\lambda^{(1)}(N)}\otimes\cdots\otimes V_N^{\lambda^{(r)}(N)}.\]
As $N\to\infty$, the measures $m[\rho[V_N]]$ and $d[\rho[V_N]]$ converge to measures $\mathbf{m}$ and $\mathbf{d}$ that are related by (\ref{eq:DMK}). 
\end{corollary}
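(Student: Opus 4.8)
The plan is to reduce Corollary \ref{thm:tensor} to Theorem \ref{thm:DMK} by verifying that the sequence of Schur generating functions of $\rho[V_N]$ is LLN-appropriate in the sense of Definition \ref{def:LLNA}. Since the Schur generating function of $\rho[V_N]$ equals the normalized character of $V_N$, and characters are multiplicative under tensor products, we have
\[
S_{\rho[V_N]}(x_1,\ldots,x_N)=\prod_{\ell=1}^r \frac{s_{\lambda^{(\ell)}(N)}(x_1,\ldots,x_N)}{s_{\lambda^{(\ell)}(N)}(1^N)}.
\]
Taking logarithms turns the product into a sum, so it suffices to check the four bullet points of Definition \ref{def:LLNA} for each factor $\log\bigl(s_{\lambda^{(\ell)}(N)}(x_1,\ldots,x_N)/s_{\lambda^{(\ell)}(N)}(1^N)\bigr)$ separately and then add the resulting constants $c_k$; the mixed-partial condition and the holomorphy condition are preserved under finite sums, and the radius-of-convergence condition only requires that the summed power series converge near unity, which follows if each summand does.

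Thus the heart of the argument is the single-Schur-function estimate: if $\lambda(N)$ is a regular sequence with limit profile $f$, then $\frac{1}{N}\partial_j^k \log s_{\lambda(N)}(x_1,\ldots,x_N)\big|_{x_i=1}$ converges to an explicit constant $c_k^{(f)}$, while all genuinely mixed derivatives divided by $N$ vanish. First I would invoke the known asymptotic expansion of normalized Schur functions for regular sequences --- this is exactly the content of the Bufetov--Gorin analysis in \cite{BuG1}; their results give that the normalized Schur function behaves like $\exp\bigl(N\sum_i G(x_i)+o(N)\bigr)$ near $(1^N)$ for a function $G$ determined by $f$, with control on derivatives. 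Concretely, one writes $s_{\lambda(N)}/s_{\lambda(N)}(1^N)$ via the Weyl character formula as a ratio of determinants and performs a saddle-point/contour-integral analysis of the single-variable specialization; the regularity hypothesis (both the $L^1$ convergence and the uniform bound $C$) is precisely what is needed to push the error terms below order $N$ and to ensure the limiting derivatives exist. The upshot is that $\log S_{\rho[V_N]}$ is, to leading order, a sum of single-variable functions $N\sum_i H(x_i)$, which forces the mixed partials to be $o(N)$ and identifies $c_k$ with $H^{(k)}(1)$ summed over the $r$ profiles.

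Once LLN-appropriateness is established, Theorem \ref{thm:DMK} applies verbatim and yields the convergence of $m[\rho[V_N]]$ and $d[\rho[V_N]]$ to deterministic $\mathbf{m}$ and $\mathbf{d}$ satisfying \eqref{eq:DMK}, completing the proof. I expect the main obstacle to be the bookkeeping in the single-Schur-function asymptotics --- specifically, verifying that the subleading terms in the expansion of $\log s_{\lambda(N)}$ near $(1^N)$ really are $o(N)$ uniformly enough that \emph{all} partial derivatives (not just the value) converge after dividing by $N$, and that the mixed derivatives vanish. If the precise statement needed is not literally quoted in \cite{BuG1}, one may instead cite \cite{BuG2} (whose Definition 2.1 we are using) where such sequences are shown to be LLN-appropriate, or reprove the estimate by differentiating the contour-integral representation under the integral sign and bounding using the regularity constant $C$. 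The algebraic reduction (multiplicativity of characters, additivity of the $c_k$) is routine; it is the analytic input about a single Schur function that carries the weight.
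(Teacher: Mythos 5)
Your proposal is correct and follows essentially the same route as the paper: express the Schur generating function as the product of normalized Schur functions, invoke the fact that regular sequences give LLN-appropriate Schur functions (the paper cites \cite{BuG2}*{Theorem 8.1} directly, whereas you sketch the underlying saddle-point argument before noting the same citation suffices), observe that products of LLN-appropriate functions remain LLN-appropriate, and apply Theorem \ref{thm:DMK}.
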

\begin{proof}
Note that the character of $V_N^{\lambda^{(1)}(N)}\otimes\cdots\otimes V_N^{\lambda^{(r)}(N)}$ is simply the product of the characters of the $V_N^{\lambda^{(i)}(N)}$, so
\[S_{\rho[V_N]}(x_1,\ldots,x_N)=\prod_{i=1}^r\frac{s_{\lambda^{(i)}(N)}(x_1,\ldots,x_N)}{s_{\lambda^{(i)}(N)}(1^N)}.\]
It is well known that Schur functions of a regular sequence of signatures are LLN-appropriate (see Theorem 8.1 from \cite{BuG2}*{Theorem 8.1} for a reference). Furthermore, it is easy to see that a product of LLN-appropriate functions is also LLN-appropriate, so the conclusion of Theorem \ref{thm:DMK} holds here.
\end{proof}
The next result is about projections of irreducible representations.
\begin{corollary}
\label{thm:project}
Suppose we have a regular sequence $\lambda(N)$ of signatures, and some fixed $0<\eta<1$. Let $V_N$ be the representation of $\rmU(\lfloor\eta N\rfloor)$ given by 
\[V_N=\rho[V_N^\lambda|_{\rmU(\lfloor\eta N\rfloor)}]].\]
As $N\to\infty$, the measures $m[\rho[V_N]]$ and $d[\rho[V_N]]$ converge to measures $\mathbf{m}$ and $\mathbf{d}$ that are related by (\ref{eq:DMK}). 
\end{corollary}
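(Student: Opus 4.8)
The plan is to deduce \Cref{thm:project} from \Cref{thm:DMK} in exactly the way \Cref{thm:tensor} was deduced from it: the only thing to verify is that the relevant Schur generating functions form an LLN-appropriate sequence. Write $M_N = \lfloor\eta N\rfloor$, so that $V_N = V_N^{\lambda(N)}|_{\rmU(M_N)}$ is a representation of $\rmU(M_N)$ and $\rho[V_N]$ is a probability measure on $\GT_{M_N}$. Since the Schur generating function of $\rho[V_N]$ is the normalized character of $V_N$ (as noted before \Cref{thm:tensor}), and the character of a restriction is obtained from the character of the ambient representation by freezing the extra variables to $1$, I would record that
\[ S_{\rho[V_N]}(x_1,\ldots,x_{M_N}) = \frac{s_{\lambda(N)}(x_1,\ldots,x_{M_N},1^{N-M_N})}{s_{\lambda(N)}(1^N)}. \]
Because $n\mapsto\lfloor\eta n\rfloor$ increases to $\infty$ in steps of $0$ or $1$, every integer $M\ge1$ equals $M_N$ for some $N$; fixing such an $N=N(M)$ for each $M$ and setting $\tilde\rho_M := \rho[V_{N(M)}]$, a measure on $\GT_M$, one has $N(M)\to\infty$ and $M/N(M)\to\eta$, while $m[\rho[V_N]] = m[\tilde\rho_{M_N}]$, $d[\rho[V_N]] = d[\tilde\rho_{M_N}]$ with $M_N\to\infty$. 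Hence it suffices to prove that $\{\tilde\rho_M\}_{M\ge1}$ is LLN-appropriate and invoke \Cref{thm:DMK}.

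The main step is a general stability fact, which I would state and check directly from \Cref{def:LLNA}: if $\{F_n(x_1,\ldots,x_n)\}_{n\ge1}$ is LLN-appropriate with constants $\{c_k\}$ and $M=M(n)\to\infty$ with $M/n\to\eta\in(0,1]$, then $F_n(x_1,\ldots,x_M,1^{n-M})$, viewed as a symmetric function in $M$ variables, is LLN-appropriate with constants $\{c_k/\eta\}$. The verification is immediate: for indices in $\{1,\ldots,M\}$ the pure and mixed partial derivatives of $\log F_n(x_1,\ldots,x_M,1^{n-M})$ evaluated at $(1^M)$ coincide with the corresponding derivatives of $\log F_n$ evaluated at $(1^n)$, since differentiation commutes with freezing the last $n-M$ variables; hence each defining limit in \Cref{def:LLNA} equals the one for $\{F_n\}$ multiplied by $n/M\to1/\eta$. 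Holomorphy of the restricted function near $(1^M)$ is inherited from holomorphy of $\log F_n$ near $(1^n)$, and multiplying every $c_k$ by the fixed constant $1/\eta$ does not change the radius of convergence of $\sum_{k\ge1}\frac{c_k}{(k-1)!}(x-1)^{k-1}$.

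To finish, I would invoke the classical fact — the same input behind \Cref{thm:tensor}, see \cite{BuG2}*{Theorem 8.1} — that for a regular sequence $\lambda(n)$ the normalized Schur functions $F_n := s_{\lambda(n)}(\cdot)/s_{\lambda(n)}(1^n)$ form an LLN-appropriate sequence, and then apply the stability fact with this $F_n$ and $M(n) = \lfloor\eta n\rfloor$. Since $S_{\tilde\rho_M}$ is precisely $F_{N(M)}(x_1,\ldots,x_M,1^{N(M)-M})$, this shows $\{\tilde\rho_M\}$ is LLN-appropriate, so \Cref{thm:DMK} applies and gives the convergence of $m[\rho[V_N]]$ and $d[\rho[V_N]]$ to deterministic measures $\mathbf{m},\mathbf{d}$ related by \eqref{eq:DMK}. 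I do not anticipate a genuine obstacle here; the only point requiring care is the bookkeeping forced by $M_N=\lfloor\eta N\rfloor$ — that the sequence is naturally indexed by the ambient dimension $N$ rather than by the number of variables $M_N$, and that $M_N/N$ only tends to $\eta$ rather than equalling it — which is exactly what the reindexing by $M$ in the first paragraph takes care of.
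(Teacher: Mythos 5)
Your proof takes the same route as the paper's: identify $S_{\rho[V_N]}$ as the normalized Schur polynomial with the last $N-\lfloor\eta N\rfloor$ variables specialized to $1$, observe that this specialization preserves LLN-appropriateness, and apply \Cref{thm:DMK}. The paper simply asserts that "restrictions of LLN-appropriate functions are also LLN-appropriate," whereas you correctly fill in the details — the reindexing from $N$ to $M=\lfloor\eta N\rfloor$ and the verification that the derivative limits rescale by $n(M)/M\to1/\eta$, yielding constants $c_k/\eta$ — so this is the same argument, just made fully explicit.
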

\begin{proof}
It is easy to see that the character of $V_N$ is the character of $V_N^\lambda$ with $1$s plugged in for the last $N-\lfloor\eta N\rfloor$ entries, so the Schur generating function of $\rho[V_N]$ is simply
\[S_{\rho[V_N]}(x_1,\ldots,x_{\eta N})=\frac{s_\lambda(x_1,\ldots,x_{\lfloor{\eta N\rfloor}},1^{N-\lfloor\eta N\rfloor})}{s_\lambda(1^N)}.\]
It is easy to see that the restrictions of LLN-appropriate functions are also LLN-appropriate, so the conclusion of Theorem \ref{thm:DMK} holds in this setting.
\end{proof}

We now give some details on the random matrix analogue of these results, avoiding precise technical statements. Let $(M_N)_{N\ge 1}$ be a sequence of unitarily invariant random matrices --- that is, the distribution of $M_N$ is invariant under conjugation by fixed unitary matrices. Letting $\mathbb{T}_N$ denote the set of all strictly decreasing sequences of real numbers of length $N$, we see that the eigenvalues of $M_N$ induce a probability measure $\rho[M_N]$ on $\mathbb{T}_N$. Furthermore, we may define a probability measure $\tilde{\rho}[M_N]$ on $\mathbb{T}_N\times\mathbb{T}_{N-1}$ given by the eigenvalues of $M_N$ and its principal $(N-1)\times(N-1)$ submatrix.

The counting measures $m[\rho]$ and $d[\tilde{\rho}]$ can be defined in the same way as discussed previously. Then, one can produce an analogue of LLN-appropriateness for the sequence $(M_N)_{N \ge 1}$ such that the limiting measures of $m[\rho]$ and $d[\tilde{\rho}]$ are $\mathbf{m}$ and $\mathbf{d}$, respectively. Moreover, the measures $\mathbf{m}$ and $\mathbf{d}$ satisfy
\begin{equation}
\label{eq:MK}
\exp\left(\sum_{k=1}^\infty\frac{z^k}{k}\int_\mathbb{R}x^k \mathbf{d}(dx)\right)=\sum_{k=0}^\infty z^k\int_\mathbb{R}x^k\mathbf{m}(dx).
\end{equation}
This can be shown using similar methods as ours, replacing Schur functions with multivariate Bessel functions. For more details, see Section 2 of \cite{GS}. 
We note that this is nontrivial and do not discuss the proof further. Moreover, the unitarily invariant matrix ensemble can be recovered from the random signatures with a semiclassical limit. A partial description of this semiclassical limit is provided in \Cref{sec:bijection_proof}, where we prove results about the bijective nature of the correspondences.

Clearly, our statement is non-rigorous, and without proof. However, we note that \eqref{eq:MK} was shown to hold for the Wigner and Wishart ensembles in \cite{Bu}, which intersects the above result in the case of the GUE ensemble.

\subsection{Quantized Markov-Krein Correspondence Bijection} \label{ssec:QMK_correspondence}

The next main result establishes that the relation \eqref{eq:DMK} produces a bijection between two classes of objects. An analogous result was proved by Kerov in \cite{KEROV} for the relation \eqref{eq:MK}, which is the Markov-Krein correspondence. We begin by introducing some classes of objects.

Let $\cM[a,b]$ denote the set of probability measures supported on the interval $[a,b]$. For any $\mu\in\cM[a,b]$, define
\[\mu_k=\int_{-\infty}^{\infty} t^k d\mu(t).\]
Let $\ctM[a,b]$ denote the set of probability measures supported on the interval $[a,b]$ with density bounded between $0$ and $1$. A continual Young diagram is defined to be a function $w:\R\to\R$ that satisfies
\begin{itemize}
    \item $|w(x_1)-w(x_2)|\le|x_1-x_2|$ for all $x_1,x_2\in\R$.
    \item There exists $x_0\in\R$ such that $w(x)=|x-x_0|$ for sufficiently large $x_0$.
\end{itemize}
For an interval $[a,b]$, let $\cD[a,b]$ denote the set of continual Young diagrams satisfying $w(x)=|x-x_0|$ for all $x\not\in[a,b]$. Furthermore, let $\ctD[a,b]$ denote the set of $w\in\cD[a,b]$ such that $R_w(u)>-1$ for real $u$ outside the interval $[a,b]$ ($R_w$ is defined below). 

Define the function $p_k:\cD[a,b]\to\R$ for $k\in\Z_{\ge 1}$ by
\begin{equation}
\label{eq:pkdef}
p_k(w)=\frac{1}{2}\int_a^b t^k w''(t)dt.
\end{equation}
Note that $w''$ is $0$ outside of the interval $[a,b]$.

For a measure $\mu\in\cM[a,b]$, define its $R$-function to be
\[R_\mu(u)=\int_a^b\frac{d\mu(t)}{u-t} = \frac{1}{u}\sum_{k=0}^\infty u^{-k}\mu_k.\]
Similarly, for a measure $\psi\in\ctM[a,b]$, define its $R$-function to be
\[R_\psi(u)=-1+\exp\int_a^b\frac{d\psi(t)}{u-t} = -1+\exp\left(\frac{1}{u}\sum_{k=0}^\infty u^{-k}\psi_k\right).\]
Also, for a diagram $w\in\cD[a,b]$ with associated $\sigma(x)=\frac{1}{2}(w(x)-|x|)$, define its $R$-function to be
\[R_w(u)=\frac{1}{u}\exp\int_a^b\frac{d\sigma(t)}{t-u} = \frac{1}{u}\exp\sum_{k=1}^\infty\frac{u^{-k}}{k}p_k(w).\]
These functions are holomorphic outside the interval $[a,b]$.

As our next main result, we will prove that \eqref{eq:DMK} produces a bijective correspondence between $\ctM[a,b]$ and $\ctD[a,b]$.

\begin{theorem}
\label{thm:BQMK}
There is a bijective correspondence between $\ctM[a,b]$ and $\ctD[a,b]$ such that $\psi\leftrightarrow w$ if and only if
\[\frac{1}{z}\left(-1+\exp\left(z\sum_{k=0}^\infty \psi_k z^k\right)\right)=\exp\left(\sum_{k=1}^\infty\frac{p_k(w)}{k}z^k\right).\]
The above relation is equivalent to $R_\psi(u)=R_w(u)$.
\end{theorem}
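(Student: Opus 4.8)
The plan is to establish the equivalence of the two displayed identities first, and then build the bijection by solving the resulting functional relation uniquely in each direction. For the equivalence: substituting $u = 1/z$, the left side of the stated relation is $z^{-1}(-1+\exp(z\sum_k \psi_k z^k)) = u(-1+\exp(u^{-1}\sum_k \psi_k u^{-k})) = u\,R_\psi(u)$ by the definition of the $R$-function of $\psi\in\ctM[a,b]$. Likewise the right side is $\exp(\sum_{k\ge 1}\frac{p_k(w)}{k}z^k) = \exp(\sum_{k\ge1}\frac{u^{-k}}{k}p_k(w)) = u\,R_w(u)$ by the definition of $R_w$. So the relation $\frac1z(-1+\exp(z\sum\psi_k z^k)) = \exp(\sum\frac{p_k(w)}{k}z^k)$ holds as an identity of formal power series in $z$ (equivalently Laurent series in $u$ near $u=\infty$) if and only if $R_\psi(u) = R_w(u)$ as holomorphic functions outside $[a,b]$. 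I would note that since both sides are holomorphic on the connected domain $\C\setminus[a,b]$ and agree on a neighborhood of $\infty$, agreement of the asymptotic expansions is equivalent to agreement of the functions.

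Next, I would set up the bijection by reading the relation as determining the moment sequences from each other. Writing $M(z) = \sum_{k\ge0}\mu_k z^k$ for a measure's moment generating function, the relation says $\exp(\sum_{k\ge1}\frac{p_k(w)}{k}z^k) = \frac1z(-1+\exp(zM_\psi(z)))$. The right-hand side, expanded, is $M_\psi(z)\cdot\frac{e^{zM_\psi(z)}-1}{zM_\psi(z)} = M_\psi(z)(1 + \tfrac12 zM_\psi(z) + \cdots)$, a power series in $z$ with constant term $\psi_0 = 1$; taking $\log$ gives a power series with zero constant term, whose coefficients are $p_k(w)/k$. This shows that given $\psi$, the numbers $p_k(w)$ are determined, and conversely given the $p_k(w)$ one recovers $zM_\psi(z) = \log(1 + z\exp(\sum\frac{p_k}{k}z^k))$, hence all the $\psi_k$, by a triangular (invertible over $\Q$) system. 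So at the level of moment sequences the correspondence is a bijection; the real content is that $\psi\in\ctM[a,b]$ (probability measure on $[a,b]$, density in $[0,1]$) corresponds exactly to $w\in\ctD[a,b]$ (continual Young diagram supported on $[a,b]$ with $R_w>-1$ outside $[a,b]$), and not to some larger or smaller class.

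For the forward direction, starting from $\psi\in\ctM[a,b]$: I would define $w$ via its $R$-function, $R_w(u) := R_\psi(u) = -1+\exp(\int_a^b\frac{d\psi(t)}{u-t})$, and need to show this $R_w$ comes from a genuine diagram $w\in\ctD[a,b]$. The key is an inverse Stieltjes-type / Markov-Krein argument: the Cauchy transform $G_\psi(u) = \int\frac{d\psi(t)}{u-t}$ of a probability measure with density bounded by $1$ has negative imaginary part bounded in modulus by $\pi$ on the upper half-plane (since $\mathrm{Im}\,G_\psi(x+i0) = -\pi\cdot(\text{density})\in[-\pi,0]$), and $u G_\psi(u)\to 1$ at infinity. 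Then $uR_w(u) = u(-1+e^{G_\psi(u)}) = u\,e^{G_\psi(u)} - u$; I want to recognize $u + uR_w(u) = ue^{G_\psi(u)}$ as $\exp$ of the Cauchy transform of $\sigma$ where $2\sigma = w - |x|$, i.e. to show $\int_a^b\frac{d\sigma(t)}{t-u}$ has the right boundary behavior so that $\sigma$ is the difference of two $1$-Lipschitz profiles — concretely that $\mathrm{Im}$ of $\log(uR_w(u)) = \log u + G_\psi(u)$ stays in an interval of length $\pi$ along $\R$, which forces $\sigma'$ (hence $\frac12 w''$) to be a measure, and $w$ to be $1$-Lipschitz. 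The condition $R_w(u)>-1$ outside $[a,b]$ is automatic since $R_w = R_\psi = -1 + (\text{positive})$ there. For the reverse direction, starting from $w\in\ctD[a,b]$: I would use that $R_w(u)>-1$ off $[a,b]$ precisely guarantees $\log(1+R_w(u)) = \log(u R_w(u)) - \log u$ is well-defined and holomorphic outside $[a,b]$, with the correct behavior at $\infty$, and that its boundary values have imaginary part in $[-\pi,0]$ because $w$ is $1$-Lipschitz; this identifies it as $G_\psi$ for a unique $\psi\in\ctM[a,b]$ via the Stieltjes inversion formula (density $= -\frac1\pi\mathrm{Im}\,G_\psi(x+i0)\in[0,1]$).

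The main obstacle I anticipate is the analytic bookkeeping of boundary values: translating ``density of $\psi$ lies in $[0,1]$'' into the precise statement that $R_w = -1+\exp(G_\psi)$ has boundary behavior characterizing a $1$-Lipschitz diagram supported on $[a,b]$, and vice versa, making sure the $\exp$ and $\log$ are single-valued on $\C\setminus[a,b]$ (this is exactly where $R_w>-1$, i.e. the passage from $\cD$ to $\ctD$, is needed) and that the constant/asymptotic normalizations ($\psi_0=1$, $w(x)=|x-x_0|$ near $\infty$) match up. I would isolate this as a lemma: a function $G$ holomorphic on $\C\setminus[a,b]$ with $G(u)=u^{-1}+O(u^{-2})$ at $\infty$ is the Cauchy transform of a measure in $\ctM[a,b]$ if and only if $-\pi\le\mathrm{Im}\,G(x+i0)\le 0$ for $x\in\R$ — a standard Nevanlinna/Stieltjes fact — and then the theorem reduces to checking that the map $G\mapsto -1+\exp(G)$, at the level of boundary imaginary parts, is exactly the change of variables that sends this strip condition to the $1$-Lipschitz condition on the associated profile, with the $R_w>-1$ condition being the single-valuedness constraint that makes the inverse $\log$ legitimate. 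Everything else (the formal power series manipulations, triangularity of the moment recursion, holomorphy on the connected complement) is routine.
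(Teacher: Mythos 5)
Your reduction of the stated power-series identity to $R_\psi(u)=R_w(u)$ via $u=1/z$ is correct, and your observation that the real content is identifying the precise image classes $\ctM[a,b]\leftrightarrow\ctD[a,b]$ is on target. However, you take a genuinely different route from the paper, and your route contains an algebraic error that undermines the analysis.

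The paper never constructs $w$ directly from $\psi$. It introduces an intermediate ordinary probability measure $\mu\in\cM[a,b]$ characterized by $R_\mu=R_\psi$, proves a bijection $\ctM[a,b]\leftrightarrow\{\mu\in\cM[a,b]:R_\mu(u)>-1\text{ for }u<a\}$ (Theorem \ref{thm:BQMK1}, established via the Krein--Nudelman characterization, Theorem \ref{thm:A6}), and then composes with Kerov's classical Markov--Krein bijection (Theorem \ref{thm:BMK}) to pass from $\mu$ to $w$. All the ``diagram recovery'' analysis is thus delegated to \ref{thm:BMK}, cited as known. Your plan bypasses the intermediate $\mu$ and attempts to verify the diagram conditions on $w$ by a direct boundary-value analysis of $\log(uR_w)$ — which amounts to re-proving the diagram side of \ref{thm:BMK} from scratch.

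The error: in the forward direction you assert $\log(uR_w(u))=\log u + G_\psi(u)$, and in the reverse direction that $\log(1+R_w(u))=\log(uR_w(u))-\log u$. Both confuse $uR_w(u)$ with $u(1+R_w(u))$. From the paper's definitions, when $R_w=R_\psi$ one has $1+R_w(u)=\exp G_\psi(u)$ (so $\log(1+R_w)=G_\psi$), while the Cauchy transform of $\sigma$ is $\log(uR_w(u))=\log u+\log\bigl(-1+e^{G_\psi(u)}\bigr)$. The second summand is \emph{not} $G_\psi(u)$, and the boundary imaginary part of $\log\bigl(-1+e^{G_\psi(u)}\bigr)$ is exactly the nontrivial object you would need to analyze to conclude that $w$ is a $1$-Lipschitz profile supported on $[a,b]$. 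Your shortcut $\Im(\log u + G_\psi)$ does not perform this analysis. To repair the argument, either carry out the correct boundary study of $\arg\bigl(u(-1+e^{G_\psi(u)})\bigr)$ along $x+i0$ (essentially re-deriving the Markov--Krein inversion for diagrams), or, as the paper does, establish only the measure-to-measure bijection $\ctM[a,b]\leftrightarrow\{\mu:R_\mu>-1\}$ using the Nevanlinna/Stieltjes lemma you already identified, and then quote Theorem \ref{thm:BMK} to pass to $\ctD[a,b]$; the side condition $R_w(u)>-1$ off $[a,b]$ then falls out automatically from $R_w=R_\mu=-1+e^{-G}$ with $e^{-G}>0$ on the real line.
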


Moreover, the Markov-Krein correspondence stated in \eqref{eq:MK}, shown by Krein and Nudelman in \cite{KREIN}, is bijective as well. The statement below, due to Kerov, is that the Markov-Krein correspondence is a bijection between $\cM[a,b]$ and $\cD[a,b]$.

\begin{theorem}[\cite{KEROV}*{p.\,107}]
\label{thm:BMK}
There is a bijective correspondence between $\cM[a,b]$ and $\cD[a,b]$ such that $\mu\leftrightarrow w$ if and only if
\[\sum_{k=0}^\infty\mu_k z^k = \exp\left(\sum_{k=1}^\infty\frac{p_k(w)}{k}z^k\right).\]
The above relation is equivalent to $R_\mu(u)=R_w(u)$.
\end{theorem}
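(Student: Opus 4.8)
\textit{Proof proposal.} The plan is to prove the correspondence in the equivalent form $R_\mu=R_w$; the displayed power-series identity then follows by the substitution $z=1/u$ together with the expansions $R_\mu(u)=u^{-1}\sum_k\mu_k u^{-k}$ and $R_w(u)=u^{-1}\exp\sum_k\tfrac{p_k(w)}{k}u^{-k}$. All series in sight converge for $|u|>\max(|a|,|b|)$, since $|\mu_k|\le\max(|a|,|b|)^k$ and, after a Stieltjes integration by parts using $w'\equiv\pm1$ off $[a,b]$, $p_k(w)=\tfrac12(a^k+b^k)-\tfrac k2\int_a^b t^{k-1}w'(t)\,dt=O(k\max(|a|,|b|)^k)$. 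Inserting this formula for $p_k(w)$ and summing the resulting geometric-type series, one obtains the closed form
\[
R_w(u)=\frac{1}{\sqrt{(u-a)(u-b)}}\exp\left(-\frac12\int_a^b\frac{w'(t)\,dt}{u-t}\right),
\]
with $\sqrt{(u-a)(u-b)}$ the branch holomorphic on $\C\setminus[a,b]$ behaving like $u$ at infinity. First valid for $|u|$ large, this identity exhibits $R_w$ as a holomorphic, zero-free function on $\C\setminus[a,b]$ with $uR_w(u)\to1$.

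The heart of the argument is the claim that for every $w\in\cD[a,b]$ the function $R_w$ maps the open upper half-plane into the lower half-plane. From the closed form, for $\Im u>0$,
\[
\arg R_w(u)=-\tfrac12\big(\arg(u-a)+\arg(u-b)\big)+\tfrac12\int_a^b w'(t)\,\frac{\Im u}{|u-t|^2}\,dt,
\]
and since $|w'|\le1$ the integral lies in $[-\theta(u),\theta(u)]$ with $\theta(u)=\int_a^b\frac{\Im u}{|u-t|^2}\,dt=\arg(u-b)-\arg(u-a)>0$; hence $\arg R_w(u)\in[-\arg(u-b),-\arg(u-a)]\subset(-\pi,0)$. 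This is the analytic incarnation of Kerov's fact that the transition measure of a continual diagram is a genuine probability measure, and it is the only place the condition $|w'|\le1$ is used. Granting it, I invoke the Nevanlinna--Herglotz representation for Cauchy transforms: a function holomorphic on $\C\setminus[a,b]$ that maps the upper half-plane into the lower and satisfies $uF(u)\to1$ equals $\int_a^b\frac{d\mu(t)}{u-t}$ for a unique $\mu\in\cM[a,b]$. Applied to $F=R_w$ this gives a well-defined map $\Phi\colon\cD[a,b]\to\cM[a,b]$ with $R_{\Phi(w)}=R_w$.

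For surjectivity of $\Phi$ I would build an explicit inverse. Given $\mu\in\cM[a,b]$, $R_\mu$ already maps the upper half-plane into the lower, so $g_\mu(u):=R_\mu(u)\sqrt{(u-a)(u-b)}$ is holomorphic and zero-free on $\C\setminus[a,b]$, tends to $1$ at infinity, and has boundary argument $\arg g_\mu(x+i0)=\arg R_\mu(x+i0)+\tfrac\pi2\in[-\tfrac\pi2,\tfrac\pi2]$ for a.e.\ $x\in[a,b]$. Consequently $\log g_\mu$ is holomorphic on $\C\setminus[a,b]$, vanishes at infinity, and (with at most logarithmic growth at $a,b$) equals the Cauchy transform of its own boundary jump: $\log g_\mu(u)=-\int_a^b\frac{\phi(t)\,dt}{u-t}$ with $\phi(t)=\tfrac1\pi\Im\log g_\mu(t+i0)\in[-\tfrac12,\tfrac12]$. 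Set $w'(t):=2\phi(t)$ on $[a,b]$, extend by $w'(t):=\mathrm{sgn}(t-x_0)$ with $x_0:=\tfrac{a+b}2-\tfrac12\int_a^b w'\in[a,b]$, and let $w$ be the antiderivative normalized so that $w(t)=|t-x_0|$ off $[a,b]$; then $w$ is $1$-Lipschitz and lies in $\cD[a,b]$. The closed form of the first paragraph now gives $R_w(u)=g_\mu(u)/\sqrt{(u-a)(u-b)}=R_\mu(u)$, so $\Phi(w)=\mu$; set $\Psi(\mu):=w$.

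Finally $\Phi$ and $\Psi$ are mutually inverse: $\Phi\circ\Psi=\mathrm{id}$ by construction, and for $w\in\cD[a,b]$ with $w_1:=\Psi(\Phi(w))$ one has $R_{w_1}=R_{\Phi(w)}=R_w$, whence the closed form forces $\exp\left(-\tfrac12\int_a^b\frac{w_1'(t)-w'(t)}{u-t}\,dt\right)\equiv1$; the exponent is holomorphic on $\C\setminus[a,b]$ and vanishes at infinity, so it is identically $0$, and the vanishing of the Cauchy transform of the $L^1$ function $w_1'-w'$ on $[a,b]$ gives $w_1'=w'$ a.e.\ there, hence $w_1=w$ after matching the common $x_0$ (which for any $w\in\cD[a,b]$ equals $\tfrac{a+b}2-\tfrac12\int_a^b w'$) and the additive normalization. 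This is the asserted bijection, with $\mu\leftrightarrow w$ exactly when $R_\mu=R_w$, i.e.\ when the power-series identity holds; specializing recovers Kerov's theorem. I expect the main obstacle to be the positivity lemma of the second paragraph --- that $R_w$ is a Nevanlinna function for \emph{every} continual diagram --- with the Nevanlinna--Herglotz representation and the (routine but slightly delicate) boundary-value bookkeeping for the Cauchy representation of $\log g_\mu$ being the remaining ingredients.
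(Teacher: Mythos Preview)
The paper does not supply its own proof of this theorem: it is stated as a result of Kerov, with the proof deferred to \cite{KEROV}. The paper does, however, prove the quantized analogue (\Cref{thm:BQMK}) by reducing it to \Cref{thm:BMK} together with a simpler bijection $\ctM[a,b]\leftrightarrow\{\mu\in\cM[a,b]:R_\mu>-1\text{ on }(-\infty,a)\}$ established via the Krein--Nudelman representation theorem (\Cref{thm:A6}). Your approach is precisely in this spirit and is essentially a reconstruction of Kerov's argument: the ``positivity lemma'' that $R_w$ sends the upper half-plane to the lower half-plane, followed by the Nevanlinna--Herglotz representation, is exactly how Kerov shows that the transition measure of a continual diagram is a genuine probability measure, and your use of it mirrors the paper's use of \Cref{thm:A6} in the quantized case.

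Your argument is correct in outline, and the computation of the closed form for $R_w$ and the bound $\arg R_w(u)\in[-\arg(u-b),-\arg(u-a)]\subset(-\pi,0)$ are clean. The one place where you are, as you say, slightly cavalier is the Cauchy representation of $\log g_\mu$ in the surjectivity step: when $\mu$ has an atom at an endpoint, $g_\mu$ blows up like $(u-a)^{-1/2}$ there, so $\log g_\mu$ genuinely has logarithmic singularities and one must check that the Plemelj-type recovery of $\phi$ still goes through (it does, since $\log g_\mu$ remains locally integrable on $[a,b]$ and the boundary argument stays in $[-\pi/2,\pi/2]$). With that caveat addressed, your proof stands on its own and does not rely on the citation the paper invokes.
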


One can realize \Cref{thm:BMK} as a semiclassical limit of \Cref{thm:BQMK}. We give a heuristic description of this semiclassical limit in \Cref{sec:bijection_proof}.

\subsection{Connection between Theorems \ref{thm:DMK} and \ref{thm:BQMK}}
The measure $\mathbf{m}$ lives in the space of probability measures with compact support and density bounded by $1$. Instead of looking at $\mathbf{d}$, we will be looking at a related object viewable as a second anti-derivative of $\mathbf{d}$.

Let $\{x_i\}$ and $\{y_i\}$ be two \textit{interlacing} sequences of real numbers, i.e.
\[x_1\le y_1\le x_2\le\cdots\le x_{N-1}\le y_{N-1}\le x_N.\]
Define $w^{\{x_i\},\{y_i\}}(x)$ to be the \textit{rectangular Young diagram} of $\{x_i\}$ and $\{y_i\}$ in the following way. Let $z_0=\sum_{i=1}^N x_i-\sum_{i=1}^{N-1}y_i$. Then, $w^{\{x_i\},\{y_i\}}(x)$ is the unique continuous function with the following properties.
\begin{itemize}
    \item $w^{\{x_i\},\{y_i\}}(x)=|x-z_0|$ for $x\le x_1$ and $x\ge x_N$.
    \item $\frac{d}{dx}w^{\{x_i\},\{y_i\}}(x) = 1$ for $x_i< x < y_i$ and $\frac{d}{dx}w^{\{x_i\},\{y_i\}}(x)=-1$ for $y_i<x<x_{i+1}$.
\end{itemize}
An example of a rectangular Young diagram is in Figure \ref{fig:interlacing}. For a probability measure $\rh$ on $\GT_N$, $w[\rh]$ is defined to be the random rectangular Young diagram $w^{\{x_i\},\{y_i\}}(x)$ with
\[\{x_i\}=\left\{\frac{\lambda_i+N-i}{N}\right\},  \quad \quad \{y_i\}=\left\{\frac{\mu_i+N-1-i}{N}\right\}\]
where $\lambda$ is distributed according to $\rh$ and $\mu$ is distributed according to $\pi_{N,N-1}\lambda$. For an LLN-appropriate sequence of probability measures $\{\rho_N\}$, convergence in probability of $d[\rho_N]$ to some measure $\mathbf{d}$ implies convergence of $w[\rho_N]$ to a limiting continual Young diagram $w$ such that $\frac{1}{2}w''(x)$ is given by the density of $\mathbf{d}$. This follows from the lemma below.
\begin{figure}[ht]
\begin{center}
\begin{tikzpicture}[line cap=round,line join=round,>=triangle 45,x=2.4cm,y=2.4cm]
\clip(-2.5,-0.2) rectangle (2.5,2.5);
\draw [line width=1.5pt,domain=-2.5:-1.65] plot(\x,{(-0.--2.*\x)/-2.});
\draw [line width=0.8pt,domain=-1.65:0.0] plot(\x,{(-0.--2.*\x)/-2.});
\draw [line width=0.8pt,domain=0.0:1.65] plot(\x,{(-0.--2.*\x)/2.});
\draw [line width=1.5pt,domain=1.65:2.5] plot(\x,{(-0.--2.*\x)/2.});
\draw [line width=0.8pt,domain=-2.5:2.5] plot(\x,{(-0.-0.*\x)/1.});
\draw [line width=1.5pt] (-1.65,1.65)-- (-1.5,1.8);
\draw [line width=1.5pt] (-1.5,1.8)-- (-1.125,1.425);
\draw [line width=1.5pt] (-1.5,1.8)-- (-1.65,1.65);
\draw [line width=1.5pt] (-1.125,1.425)-- (-0.9,1.65);
\draw [line width=1.5pt] (-0.525,1.275)-- (-0.9,1.65);
\draw [line width=1.5pt] (-0.3,1.5)-- (-0.525,1.275);
\draw [line width=1.5pt] (-0.3,1.5)-- (0.225,0.975);
\draw [line width=1.5pt] (0.225,0.975)-- (0.525,1.275);
\draw [line width=1.5pt] (0.525,1.275)-- (0.675,1.125);
\draw [line width=1.5pt] (1.425,1.875)-- (1.65,1.65);
\draw [line width=1.5pt] (0.675,1.125)-- (1.425,1.875);
\draw [line width=0.8pt,dash pattern=on 3pt off 3pt] (-1.65,0.) -- (-1.65,2.5);
\draw [line width=0.8pt,dash pattern=on 3pt off 3pt] (-1.5,0.) -- (-1.5,2.5);
\draw [line width=0.8pt,dash pattern=on 3pt off 3pt] (-1.125,0.) -- (-1.125,2.5);
\draw [line width=0.8pt,dash pattern=on 3pt off 3pt] (-0.9,0.) -- (-0.9,2.5);
\draw [line width=0.8pt,dash pattern=on 3pt off 3pt] (-0.525,0.) -- (-0.525,2.5);
\draw [line width=0.8pt,dash pattern=on 3pt off 3pt] (-0.3,0.) -- (-0.3,2.5);
\draw [line width=0.8pt,dash pattern=on 3pt off 3pt] (0.225,0.) -- (0.225,2.5);
\draw [line width=0.8pt,dash pattern=on 3pt off 3pt] (0.525,0.) -- (0.525,2.5);
\draw [line width=0.8pt,dash pattern=on 3pt off 3pt] (0.675,0.) -- (0.675,2.5);
\draw [line width=0.8pt,dash pattern=on 3pt off 3pt] (1.425,0.) -- (1.425,2.5);
\draw [line width=0.8pt,dash pattern=on 3pt off 3pt] (1.65,0.) -- (1.65,2.5);
\begin{scriptsize}
\draw [fill=black] (-1.65,0.) circle (1.8pt);
\draw[color=black] (-1.65, -0.125) node {$x_1$};
\draw [fill=black] (-1.5,0.) circle (1.8pt);
\draw[color=black] (-1.5,-0.125) node {$y_1$};
\draw [fill=black] (-1.125,0.) circle (1.8pt);
\draw[color=black] (-1.125,-0.125) node {$x_2$};
\draw [fill=black] (-0.9,0.) circle (1.8pt);
\draw[color=black] (-0.9,-0.125) node {$y_2$};
\draw [fill=black] (-0.525,0.) circle (1.8pt);
\draw[color=black] (-0.525,-0.125) node {$x_3$};
\draw [fill=black] (-0.3,0.) circle (1.8pt);
\draw[color=black] (-0.3,-0.125) node {$y_3$};
\draw [fill=black] (0.225,0.) circle (1.8pt);
\draw[color=black] (0.225,-0.125) node {$x_4$};
\draw [fill=black] (0.525,0.) circle (1.8pt);
\draw[color=black] (0.525,-0.125) node {$y_4$};
\draw [fill=black] (0.675,0.) circle (1.8pt);
\draw[color=black] (0.675,-0.125) node {$x_5$};
\draw [fill=black] (1.425,0.) circle (1.8pt);
\draw[color=black] (1.425,-0.125) node {$y_5$};
\draw [fill=black] (1.65,0.) circle (1.8pt);
\draw[color=black] (1.65,-0.125) node {$x_6$};
\end{scriptsize}
\end{tikzpicture}
\end{center}
\caption{Interlacing sequences and rectangular Young diagram with $N= 6$}
\label{fig:interlacing}
\end{figure}
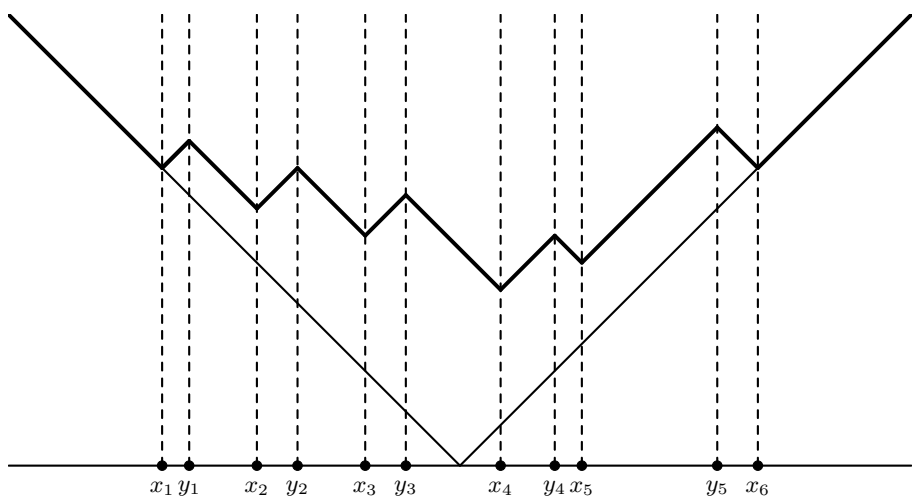
\begin{lemma}[\cite{Bu}*{Lemma 2.1}]
\label{lem:diagram_top}
Let $\mathcal{F}[a,b]$ be the set of all Lipchitz-$1$ real valued function $f(x)$ supported on $[a,b]$. The weak topology on this set defined by the functionals
\[f(x)\to\int_{a}^b f(x)x^k dx\]
for $k\ge 0$ coincides with the uniform topology.
\end{lemma}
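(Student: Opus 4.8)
The plan is to prove the lemma by showing that the identity map from $\mathcal{F}[a,b]$ with the uniform topology to $\mathcal{F}[a,b]$ with the weak topology generated by the moment functionals $\phi_k(f)=\int_a^b f(x)x^k\,dx$ is a homeomorphism. Since a continuous bijection from a compact space onto a Hausdorff space is automatically a homeomorphism, it suffices to establish three facts: (i) each $\phi_k$ is continuous in the uniform topology, so that the identity map is continuous; (ii) $\mathcal{F}[a,b]$ is compact in the uniform topology; and (iii) the weak moment topology is Hausdorff. Once these are in place the two topologies coincide on $\mathcal{F}[a,b]$, which is exactly the assertion.

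For (i), if $\|f_n-f\|_\infty\to 0$ then $|\phi_k(f_n)-\phi_k(f)|\le (b-a)\max(|a|,|b|)^k\|f_n-f\|_\infty\to 0$, using that $[a,b]$ is bounded; hence every $\phi_k$, and therefore the initial topology they generate, is coarser than the uniform topology. For (ii), I would invoke Arzel\`a--Ascoli: every $f\in\mathcal{F}[a,b]$ vanishes outside $[a,b]$ and is Lipschitz-$1$, so $|f(x)|\le\min(x-a,b-x)\le\tfrac12(b-a)$ on $[a,b]$, giving uniform boundedness, while the Lipschitz-$1$ bound gives equicontinuity; one then checks that $\mathcal{F}[a,b]$ is closed under uniform limits, since a uniform limit of Lipschitz-$1$ functions vanishing outside $[a,b]$ retains both properties. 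For (iii), if $f,g\in\mathcal{F}[a,b]$ satisfy $\phi_k(f)=\phi_k(g)$ for all $k\ge 0$, then $\int_a^b(f-g)q=0$ for every polynomial $q$; since $f-g$ is continuous, the Weierstrass approximation theorem upgrades this to $\int_a^b(f-g)h=0$ for all $h\in C[a,b]$, and taking $h=f-g$ forces $f=g$, so distinct functions in $\mathcal{F}[a,b]$ are separated by the $\phi_k$.

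I do not expect a genuine obstacle here: the statement is a soft functional-analytic fact and the three ingredients above are standard. The only points deserving care are the verification that $\mathcal{F}[a,b]$ is uniformly closed, which is what turns precompactness from Arzel\`a--Ascoli into honest compactness, and --- if one prefers to argue with sequences rather than the abstract compact-to-Hausdorff principle --- the observation that the weak topology is generated by countably many functionals and hence is metrizable on $\mathcal{F}[a,b]$; in that formulation, a failure of uniform convergence of some $f_n\to f$ would produce, via Arzel\`a--Ascoli, a subsequence converging uniformly to a limit $g\neq f$, which would also converge weakly to $g$ and contradict (iii).
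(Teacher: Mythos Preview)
The paper does not supply its own proof of this lemma; it is quoted verbatim from \cite{Bu}*{Lemma 2.1} and used as a black box, so there is nothing in the text to compare your argument against. Your proposal is correct and is essentially the standard proof one would expect: continuity of each $\phi_k$ in the uniform norm shows the weak topology is coarser, Arzel\`a--Ascoli together with closedness gives compactness of $\mathcal{F}[a,b]$ in the uniform topology, Weierstrass approximation shows the moments separate points so the weak topology is Hausdorff, and the compact-to-Hausdorff principle finishes. The care points you flag (closedness of $\mathcal{F}[a,b]$ under uniform limits, and the optional sequential reformulation via metrizability) are exactly the right ones, and neither presents any difficulty.
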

Now, \Cref{thm:DMK} and \eqref{eq:pkdef} imply that the limits of $m[\rho_N]$ and $w[\rho_N]$ are paired by quantized Markov-Krein bijection as in Theorem \ref{thm:BQMK}.

\section{Moments of Limiting Measure}
\label{sec:momlim}
Here we will explicitly compute the moments referenced in \Cref{thm:DMK}. This is important for our eventual proof of the theorem.
\begin{theorem}[\cite{BuG1}*{Theorem 5.1}]
\label{thm:momreg}
Suppose that a sequence of probability measures $\rho=\{\rho_N\}_{N\ge 1}$, where $\rho_N$ is a probability measure on $\GT_N$, is LLN-appropriate. Let $H(x)=H_\rho(x)$ be the associated function from Definition \ref{def:LLNA}. The random measure $m[\rho_N]$ converges as $N\to\infty$ in probability, in the sense of moments to a deterministic measure $\mathbf{m}$ on $\mathbb{R}$ with moments
\[\int_\R x^k\mathbf{m}(dx)=\left.\sum_{\ell=0}^k\frac{1}{(\ell+1)!}\binom{k}{\ell}\left(\frac{\del}{\del z}\right)^\ell\left(z^k H'(z)^{k-\ell}\right)\right|_{z=1}.\]
\end{theorem}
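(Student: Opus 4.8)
The plan is a method of moments, carried out on the side of Schur generating functions, following \cite{BuG1}. By definition the $k$-th moment of $m[\rho_N]$ equals $N^{-(k+1)}p_k(\lambda)$, where $p_k(\lambda):=\sum_{i=1}^N(\lambda_i+N-i)^k$, so it suffices to control the joint distribution of the $p_k(\lambda)$ under $\rho_N$. The algebraic input I would use is the differential operator
\[\mathcal{P}_k:=\Delta^{-1}\circ\Bigl(\sum_{i=1}^N (x_i\del_i)^k\Bigr)\circ\Delta,\qquad \Delta:=\prod_{1\le i<j\le N}(x_i-x_j),\]
which is well defined on symmetric (Laurent) polynomials because an antisymmetric polynomial is always divisible by $\Delta$. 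Since $(x_i\del_i)^k$ scales the monomial $x^\mu$ by $\mu_i^k$, the operators $\sum_i(x_i\del_i)^k$ are simultaneously diagonal in the monomial basis (hence commute), and one computes directly that $\mathcal{P}_k s_\lambda=p_k(\lambda)\,s_\lambda$. Therefore, for any $k_1,\dots,k_s$,
\[\bigl(\mathcal{P}_{k_1}\cdots\mathcal{P}_{k_s}S_{\rho_N}\bigr)\big|_{(1^N)}=\mathbb{E}_{\rho_N}\!\Bigl[\prod_{t=1}^{s}p_{k_t}(\lambda)\Bigr],\]
the only subtlety being that $\mathcal{P}_k$ must be applied at the level of symmetric functions and only afterwards specialized at $(1^N)$, where $\Delta$ vanishes.

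The second step is the $N\to\infty$ asymptotics of $\mathcal{P}_kS_{\rho_N}|_{(1^N)}$ and $\mathcal{P}_{k_1}\mathcal{P}_{k_2}S_{\rho_N}|_{(1^N)}$. Expanding near $x=(1^N)$ with $x_i=1+\varepsilon_i$, one has $x_i\del_i=(1+\varepsilon_i)\del_{\varepsilon_i}$ and $\Delta$ is, to leading order, the Vandermonde in the $\varepsilon_i$; writing $S_{\rho_N}$ through its Taylor expansion at $(1^N)$ and invoking LLN-appropriateness — diagonal derivatives $\del_i^j\log S_{\rho_N}|_{(1^N)}\sim Nc_j$, mixed derivatives $o(N)$ — the computation decouples across variables, so that only the single-variable datum $H=H_\rho$ (with $H'(z)=\sum_{k\ge1}\frac{c_k}{(k-1)!}(z-1)^{k-1}$) contributes at leading order. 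Carrying this out via the Leibniz expansion of $(x_i\del_i)^k$ (using $(x\del)^k=\sum_\ell\stirling{k}{\ell}x^\ell\del^\ell$) together with the rational identities for $\Delta^{-1}(x_i\del_i)^a\Delta$, and collecting the surviving contributions as a residue at $z=1$, one should arrive at
\[\lim_{N\to\infty}\frac{1}{N^{k+1}}\,\mathcal{P}_k S_{\rho_N}\big|_{(1^N)}=\mathfrak{m}_k:=\left.\sum_{\ell=0}^{k}\frac{1}{(\ell+1)!}\binom{k}{\ell}\Bigl(\frac{\del}{\del z}\Bigr)^{\ell}\!\bigl(z^kH'(z)^{k-\ell}\bigr)\right|_{z=1}.\]
The same bookkeeping applied to $\mathcal{P}_{k_1}\mathcal{P}_{k_2}$ shows that the terms coupling the two operators are of strictly lower order — each either carries a mixed derivative of $\log S_{\rho_N}$ or loses a factor of $N$ through the $\Delta$-conjugation — so that $N^{-(k_1+k_2+2)}\mathbb{E}[p_{k_1}p_{k_2}]\to\mathfrak{m}_{k_1}\mathfrak{m}_{k_2}$; equivalently $\mathrm{Var}\bigl(N^{-(k+1)}p_k\bigr)\to0$.

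Combining the two steps, for each $k$ the random variable $\int_\R x^k\,m[\rho_N](dx)=N^{-(k+1)}p_k$ has expectation converging to $\mathfrak{m}_k$ and variance tending to $0$, hence converges in probability to $\mathfrak{m}_k$ by Chebyshev; the convergence of the power series for $H'$ near $1$ (the last bullet in Definition \ref{def:LLNA}) keeps the rescaled particles $(\lambda_i+N-i)/N$ in a fixed compact set with probability tending to $1$, so the $\mathfrak{m}_k$ grow at most geometrically and are the moments of a unique compactly supported measure $\mathbf m$, giving $m[\rho_N]\to\mathbf m$ in probability in the sense of moments. I expect the main obstacle to be exactly the asymptotic evaluation in the second step — making the single-variable reduction rigorous with uniform control of the error, and extracting precisely the coefficients $\frac{1}{(\ell+1)!}\binom{k}{\ell}$ from the Leibniz/residue computation after the singular conjugation by $\Delta$ and the specialization at $(1^N)$. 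This is precisely the content of \cite{BuG1}*{Theorem 5.1} (together with the contour-integral techniques of \cite{BuG2}), to which I would ultimately defer for the full details.
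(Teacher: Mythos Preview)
Your proposal is correct and follows essentially the same route as the paper: the operator $\mathcal{P}_k$ is exactly the paper's $\cD_{N,k}$, and the paper likewise expands it via the Stirling-number identity $(x\del)^k=\sum_m\stirling{k}{m}x^m\del^m$ (Lemma~\ref{lem:expansion}), reduces the evaluation at $(1^N)$ to a single-variable computation through the cyclic-sum identity of Lemma~\ref{lem:BG 5.5}, and then uses LLN-appropriateness to extract the leading term (Proposition~\ref{prelimit}) and to show the variance vanishes (Proposition~\ref{prop:equal_leading_terms}). The only mild overreach is your last sentence: the compact-support/tightness claim does not follow from the convergence of the power series for $H'$ alone, and the paper does not argue it that way---it simply establishes convergence of each moment in probability and leaves the identification of $\mathbf m$ implicit.
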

\begin{theorem}
\label{thm:momdiff}
Suppose that a sequence of probability measures $\rho=\{\rho_N\}_{N\ge 1}$, where $\rho_N$ is a probability measure on $\GT_N$, is LLN-appropriate. Let $H(x)=H_\rho(x)$ be the associated function from Definition \ref{def:LLNA}. The random measure $d[\rho_N]$ converges as $N\to\infty$ in probability, in the sense of moments to a deterministic measure $\mathbf{d}$ on $\mathbb{R}$ with moments
\[\int_\R x^k\mathbf{d}(dx)=\left.\sum_{\ell=0}^k\frac{1}{\ell!}\binom{k}{\ell}\left(\frac{\del}{\del z}\right)^\ell\left(z^k H'(z)^{k-\ell}\right)\right|_{z=1}.\]
\end{theorem}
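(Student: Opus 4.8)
The plan is to obtain the moments of $\mathbf{d}$ by the same Schur-generating-function machinery that yields Theorem~\ref{thm:momreg}, applied now to the pair $(\lambda,\mu)$ with $\mu = \pi_{N,N-1}\lambda$. The key point is that $d[\lambda,\mu]$ splits as $N\cdot m[\lambda] - (N-1)\cdot m[\mu]$ (after matching normalizations), so it suffices to understand, to subleading order in $N$, the joint moments $\int x^k\, m[\lambda](dx)$ and $\int x^k\, m[\mu](dx)$ under the coupled distribution $(\rho_N, \pi_{N,N-1}\rho_N)$. For this I would introduce the differential operators $\cD_k$ (powers of $\del_i$ symmetrized, in the normalization used in \cite{BuG1,BuG2}) whose eigenfunctions are Schur functions with eigenvalue $\sum_i (x_i\del_i)^k$ evaluated appropriately, and whose action on the Schur generating function $S_{\rho_N}$ produces precisely the moments of $m[\rho_N]$. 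The crucial structural fact — stated in the excerpt as the property that ``the differential operators commute with each other asymptotically relative to the order of the limit'' — is what lets one compute these moments term by term and keep only the leading plus first subleading contributions.

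The main new ingredient relative to Theorem~\ref{thm:momreg} is the Schur generating function of the projection $\pi_{N,N-1}\lambda$ given $\lambda$. Here one uses the branching rule: restricting $V_N^\lambda$ to $\rmU(N-1)$ corresponds, at the level of normalized characters, to setting $x_N = 1$, so that $S_{\pi_{N,N-1}\rho_N}(x_1,\dots,x_{N-1})$ is obtained from $S_{\rho_N}(x_1,\dots,x_{N-1},1)$ up to the dimension normalization. Consequently the generating function governing the $\mu$-part differs from that of the $\lambda$-part only by this ``freezing of one variable,'' which at the level of the cumulant data $\{c_k\}$ of Definition~\ref{def:LLNA} is an $O(1/N)$ perturbation — exactly the order that survives in $d[\rho_N]$ while being invisible in $m[\rho_N]$. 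I would therefore: (i) write $\int x^k\, d[\rho_N](dx)$ as the difference of the two families of moments, each expanded as $N\cdot(\text{leading}) + (\text{subleading}) + o(1)$; (ii) observe the leading terms cancel; (iii) identify the surviving subleading term with the stated formula; and (iv) upgrade convergence of moments to convergence in probability by the usual second-moment/variance estimate, again using asymptotic commutativity of the $\cD_k$ to show cross-variances vanish.

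Comparing the target formula $\sum_{\ell=0}^k \frac{1}{\ell!}\binom{k}{\ell}(\del/\del z)^\ell(z^k H'(z)^{k-\ell})\big|_{z=1}$ with the one in Theorem~\ref{thm:momreg}, the only change is $1/(\ell+1)!\rightsquigarrow 1/\ell!$. Since $\frac{1}{(\ell+1)!} = \frac{1}{\ell!}\cdot\frac{1}{\ell+1}$ and $\frac{1}{\ell!} - \frac{1}{(\ell+1)!} = \frac{1}{\ell!}\cdot\frac{\ell}{\ell+1}$, one natural route is to not recompute from scratch but to show that $\int x^k\, \mathbf{d}(dx)$ equals $\frac{d}{ds}\big|_{s=1}\big(s\cdot \int x^k\, \mathbf{m}_s(dx)\big)$ or an analogous ``derivative in the size parameter'' of the moment functional of Theorem~\ref{thm:momreg} — i.e. differentiating the leading-order $m$-moment with respect to the number of variables $N$ produces exactly the combinatorial shift in the factorials. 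I would make this precise by treating $N$ as a formal continuous parameter in the symmetric-function computation (as is standard in this circle of ideas), so that $d[\rho_N] = -\del_N\big(N\, m[\rho_N]\big)$ heuristically, and then extracting the $\ell$-th term. The hard part will be step (ii)–(iii): controlling the subleading term in the expansion of each moment uniformly, since this requires tracking how the $O(1/N)$ discrepancy between the $\lambda$- and $\mu$-generating functions propagates through the (only asymptotically commuting) operators $\cD_1,\dots,\cD_k$ — precisely the estimate flagged in the introduction as ``the expansion of the moments of $\rho[V_N]$ to subleading terms.'' Everything else is bookkeeping: matching the resulting combinatorial sum to the closed form above, and the standard variance bound for the probabilistic upgrade.
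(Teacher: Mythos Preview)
Your high-level plan (Schur generating functions, the operators $\cD_{N,k}$, branching via freezing $x_N=1$) matches the paper's. However, your step (iii) as written would not go through. LLN-appropriateness controls only the \emph{leading} behaviour of $\del^\alpha \log S_{\rho_N}|_{x=1}$; the individual subleading terms of $\frac{1}{N^k}\cD_{N,k}S_{\rho_N}|_{x=1}$ and of $\frac{1}{N^k}\cD_{N-1,k}S_{\rho_N}|_{x=1}$ involve the uncontrolled factor $T_N$ (the quotient of $S_{\rho_N}$ by $\exp(N\sum_i H(x_i))$) and are not functions of $H$ alone, so you cannot ``identify the surviving subleading term'' from each piece separately. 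The paper avoids this entirely by working with the difference operator: via Lemma~\ref{lem:expansion}, $\cD_{N,k}-\cD_{N-1,k}$ is the same sum of cyclic terms as $\cD_{N,k}$ but restricted to index sets $\{i_0,\dots,i_\ell\}\subseteq[N]$ containing $N$. There are $\binom{N-1}{\ell}$ such sets rather than $\binom{N}{\ell+1}$, so the whole expression is one order lower in $N$, and the \emph{leading-order} computation (Proposition~\ref{prelimit}, which only uses $H$) already suffices. The combinatorial shift $\binom{N}{\ell+1}\to\binom{N-1}{\ell}$ is precisely what converts $\frac{1}{(\ell+1)!}$ into $\frac{1}{\ell!}$ --- so your ``differentiate in the size parameter'' heuristic is the right intuition, but it is realized as a discrete subset count, not via a subleading expansion of each moment.

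For step (iv) there is a new estimate you do not flag. In Theorem~\ref{thm:momreg} the two copies of $\cD_{N,k}$ commute exactly; here Theorem~\ref{thm:opmom} gives the second moment as $\cD_{N,k}^2 - 2\cD_{N-1,k}\cD_{N,k} + \cD_{N-1,k}^2 = (\cD_{N,k}-\cD_{N-1,k})^2 + [\cD_{N,k},\cD_{N-1,k}]$ applied to $S_{\rho_N}$, and the commutator is nonzero. The paper shows $[\cD_{N,k},\cD_{N-1,k}]S_{\rho_N}|_{x=1}$ is $O(N^{2k-1})$ (hence negligible after dividing by $N^{2k}$) by observing that in its expansion only pairs of index sets with nonempty intersection, one of which must contain $N$, survive, and that on those the per-term order drops by one. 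This is the concrete content of ``asymptotic commutativity'' here and is the genuinely new work relative to Theorem~\ref{thm:momreg}.
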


\subsection{Moments and Schur Generating Functions}
\label{sec:moments and schur gf}
In this section, we will be introducing tools necessary to prove \Cref{thm:momreg,thm:momdiff}. The point of this section is that the moments of $m[\rh]$ and $d[\rh]$ can be found by applying certain differential operators on the Schur generating function of $\rh$.
\begin{definition}
\label{def:operator}
Define a differential operator on functions of $(x_1,\ldots,x_N)$ by
\[\cD_{N,k} := \frac{1}{\displaystyle\prod_{1\le i<j\le N}(x_i-x_j)}\left(\sum_{i=1}^N(x_i\del_i)^k\right)\prod_{1\le i<j\le N}(x_i-x_j)\]
where $\del_i := \frac{\del}{\del x_i}$.
\end{definition}
The following theorem shows how this can be used to find the moments of $m[\rh]$ and $d[\rh]$. Note that this is similar to \cite{BuG1}*{Theorem 4.5}.
\begin{theorem}
\label{thm:opmom}
Let $\rh$ be a probability distribution on $\GT_N$. Then, we have
\[\E\left[\left(\int_\mathbb{R}x^km[\rh](dx)\right)^n\right]=\left.\frac{1}{N^{n(k+1)}}\cD_{N,k}^nS_\rh(x_1,\ldots,x_N)\right|_{x_1=\cdots=x_N=1}\]
and
\[\E\left[\left(\int_\mathbb{R}x^kd[\rh](dx)\right)^n\right]=\left.\frac{1}{N^{nk}}\left(\sum_{\ell=0}^n\binom{n}{\ell}(-1)^{n-\ell}\cD_{N-1,k}^{n-\ell}\cD_{N,k}^\ell\right)S_\rh(x_1,\ldots,x_N)\right|_{x_1=\cdots=x_N=1}.\]
\end{theorem}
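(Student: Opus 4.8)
The plan is to reduce the whole statement to one algebraic fact: the Schur functions diagonalize the operators $\cD_{N,k}$. Writing $\Delta_N:=\prod_{1\le i<j\le N}(x_i-x_j)$, I would first prove that for every $\lambda\in\GT_N$,
\[\cD_{N,k}\,s_\lambda(x_1,\dots,x_N)=\Big(\sum_{i=1}^N(\lambda_i+N-i)^k\Big)\,s_\lambda(x_1,\dots,x_N).\]
This comes straight from $s_\lambda=\det\big[x_i^{\lambda_j+N-j}\big]/\Delta_N$: in the permutation expansion of the determinant, the operator $\sum_i(x_i\del_i)^k$ scales the monomial $\prod_i x_i^{\lambda_{\sigma(i)}+N-\sigma(i)}$ by $\sum_i(\lambda_{\sigma(i)}+N-\sigma(i))^k=\sum_j(\lambda_j+N-j)^k$, which does not depend on $\sigma$; hence $\sum_i(x_i\del_i)^k$ multiplies $\Delta_N s_\lambda$ by this scalar, and dividing by $\Delta_N$ gives the claim. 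I would also note that $\cD_{N,k}$ is genuinely well defined on symmetric (Laurent) functions, since $\sum_i(x_i\del_i)^k$ is a symmetric differential operator, hence sends antisymmetric Laurent series to antisymmetric ones, and every antisymmetric Laurent polynomial is divisible by $\Delta_N$; in particular $\cD_{N,k}^n S_\rh$ is holomorphic in a neighborhood of $(1^N)$ and may be evaluated there, and termwise differentiation of the defining series of $S_\rh$ is legitimate under the standing uniform-convergence hypothesis on $\rh$.

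Abbreviate $p_k(\lambda):=\sum_{i=1}^N(\lambda_i+N-i)^k$. For the first identity, note that if $\lambda\sim\rh$ then $\int_\R x^k\,m[\rh](dx)=p_k(\lambda)/N^{k+1}$, so $\E\big[(\int x^k m[\rh])^n\big]=N^{-n(k+1)}\E[p_k(\lambda)^n]$. Applying the eigenvalue relation $n$ times and using $s_\lambda(1^N)/s_\lambda(1^N)=1$ gives $\cD_{N,k}^n S_\rh\big|_{x=1}=\sum_\lambda\rh(\lambda)\,p_k(\lambda)^n=\E[p_k(\lambda)^n]$, which is exactly the asserted formula after inserting the $N^{-n(k+1)}$ factor.

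For the second identity, set $q_k(\mu):=\sum_{i=1}^{N-1}(\mu_i+(N-1)-i)^k$ for $\mu\in\GT_{N-1}$, so that $\int_\R x^k\,d[\lambda,\mu](dx)=N^{-k}\big(p_k(\lambda)-q_k(\mu)\big)$ and, by the binomial theorem, $\E\big[(\int x^k d[\rh])^n\big]=N^{-nk}\sum_{\ell=0}^n\binom n\ell(-1)^{n-\ell}\E[p_k(\lambda)^\ell q_k(\mu)^{n-\ell}]$, where $\lambda\sim\rh$ and $\mu\sim\pi_{N,N-1}\lambda$. It remains to identify $\E[p_k(\lambda)^\ell q_k(\mu)^{n-\ell}]$ with $\cD_{N-1,k}^{n-\ell}\cD_{N,k}^\ell S_\rh\big|_{x=1}$, where in this composite $\cD_{N-1,k}$ acts only in the variables $x_1,\dots,x_{N-1}$ and treats $x_N$ as a constant. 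The branching rule $s_\lambda(x_1,\dots,x_{N-1},1)=\sum_{\mu}s_\mu(x_1,\dots,x_{N-1})$, summed over $\mu\in\GT_{N-1}$ interlacing $\lambda$, is precisely the multiplicity-one decomposition of $V_N^\lambda|_{\rmU(N-1)}$; combined with $\dim V_N^\lambda=s_\lambda(1^N)$ and $\dim V_{N-1}^\mu=s_\mu(1^{N-1})$ it gives $\mathbb P(\pi_{N,N-1}\lambda=\mu\mid\lambda)=s_\mu(1^{N-1})/s_\lambda(1^N)$ for interlacing $\mu$. Now I apply $\cD_{N,k}^\ell$ to $s_\lambda(x_1,\dots,x_N)$, obtaining $p_k(\lambda)^\ell s_\lambda$, and then $\cD_{N-1,k}^{n-\ell}$; since $\cD_{N-1,k}$ involves neither $\del_N$ nor $x_N$, it commutes with the substitution $x_N=1$, so by the branching rule and the eigenvalue relation applied in $N-1$ variables the result, evaluated at $x_1=\dots=x_N=1$, equals $p_k(\lambda)^\ell\sum_{\mu}q_k(\mu)^{n-\ell}s_\mu(1^{N-1})=p_k(\lambda)^\ell\,s_\lambda(1^N)\,\E[q_k(\mu)^{n-\ell}\mid\lambda]$. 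Dividing by $s_\lambda(1^N)$, weighting by $\rh(\lambda)$, summing over $\lambda$, and recognizing $S_\rh=\sum_\lambda\frac{\rh(\lambda)}{s_\lambda(1^N)}s_\lambda$ then yields $\cD_{N-1,k}^{n-\ell}\cD_{N,k}^\ell S_\rh\big|_{x=1}=\E[p_k(\lambda)^\ell q_k(\mu)^{n-\ell}]$, and assembling the binomial sum proves the theorem.

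The diagonalization step and the first identity are routine; the main obstacle is the bookkeeping in the last paragraph — keeping straight which variables each operator differentiates, justifying that $\cD_{N-1,k}$ commutes with setting $x_N=1$, and correctly matching the branching rule to the conditional law of $\mu$ given $\lambda$.
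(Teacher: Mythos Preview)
Your proposal is correct and follows essentially the same route as the paper: diagonalize $\cD_{N,k}$ on Schur functions, handle the first identity by direct application, and for the second identity use the branching rule $s_\lambda(x_1,\dots,x_{N-1},1)=\sum_{\mu\prec\lambda}s_\mu(x_1,\dots,x_{N-1})$ to convert $\cD_{N-1,k}^{n-\ell}\cD_{N,k}^\ell S_\rh|_{x=1}$ into the mixed moment $\E[p_k(\lambda)^\ell q_k(\mu)^{n-\ell}]$, then assemble with the binomial theorem. The paper packages the branching step as a separate lemma stating $S_{\pi_{N,N-1}\delta_\lambda}(x_1,\dots,x_{N-1})=s_\lambda(x_1,\dots,x_{N-1},1)/s_\lambda(1^N)$, but this is exactly the same content you invoke directly.
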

To prove this, we need the following lemma that allows us to get a handle on the Schur generating function after applying the projection map.
\begin{lemma}
\label{lem:ESGF}
For any $\lambda\in\GT_N$, we have
\[S_{\pi_{N,N-1}\delta_\lambda}(x_1,\ldots,x_{N-1})=\frac{s_\lambda(x_1,\ldots,x_{N-1},1)}{s_\lambda(1^N)}.\]
\end{lemma}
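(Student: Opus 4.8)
The plan is to unwind both sides using the branching rule for the restriction $V_N^\lambda|_{\rmU(N-1)}$ and the combinatorial interpretation of the Schur polynomial in the extra variable set to $1$. First I would recall that the branching rule says $V_N^\lambda|_{\rmU(N-1)} = \bigoplus_{\mu\prec\lambda} V_{N-1}^\mu$, where $\mu\prec\lambda$ means $\mu\in\GT_{N-1}$ interlaces with $\lambda$ (i.e. $\lambda_i\ge\mu_i\ge\lambda_{i+1}$), and crucially each multiplicity $m_\mu$ is $0$ or $1$. Plugging this into Definition \ref{def:proj} gives $\mathbb{P}(\pi_{N,N-1}\delta_\lambda=\mu) = \dim(V_{N-1}^\mu)/\dim(V_N^\lambda|_{\rmU(N-1)})$ for $\mu\prec\lambda$, and $0$ otherwise. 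Since $\dim(V_N^\lambda|_{\rmU(N-1)}) = \dim V_N^\lambda = s_\lambda(1^N)$ and $\dim V_{N-1}^\mu = s_\mu(1^{N-1})$, the definition of the Schur generating function then yields
\[
S_{\pi_{N,N-1}\delta_\lambda}(x_1,\ldots,x_{N-1}) = \sum_{\mu\prec\lambda} \frac{s_\mu(1^{N-1})}{s_\lambda(1^N)}\cdot\frac{s_\mu(x_1,\ldots,x_{N-1})}{s_\mu(1^{N-1})} = \frac{1}{s_\lambda(1^N)}\sum_{\mu\prec\lambda} s_\mu(x_1,\ldots,x_{N-1}).
\]
So the lemma reduces to the classical branching identity for Schur functions $s_\lambda(x_1,\ldots,x_{N-1},1) = \sum_{\mu\prec\lambda} s_\mu(x_1,\ldots,x_{N-1})$.

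To justify that last identity I would invoke the standard combinatorial fact: specializing one variable of a Schur polynomial to $1$ corresponds to summing over the possible shapes obtained by adding a horizontal strip, which in the $\GT$/interlacing language is exactly the sum over $\mu$ with $\mu\prec\lambda$. Concretely, one can cite the branching rule for Schur functions (e.g. via the combinatorial definition $s_\lambda = \sum_T x^T$ over semistandard tableaux, splitting off the entries equal to $N$, which form a horizontal strip), or derive it directly from the bialternant formula by expanding the determinant $\det[x_i^{\lambda_j+N-j}]$ along its last row after setting $x_N=1$ and matching the Vandermonde factors. Either route is routine and I would just state it with a reference.

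The only mild subtlety — the step I would be most careful about — is bookkeeping the dimension normalizations and the index shift: one must check that $\dim V_N^\lambda = s_\lambda(1^N)$ (the Weyl dimension formula for $\rmU(N)$, which is exactly the principal specialization of the Schur function), that $\dim(V_N^\lambda|_{\rmU(N-1)}) = \dim V_N^\lambda$ since restriction preserves dimension, and that the normalization $s_\mu(1^{N-1})$ in the denominator of the Schur generating function cancels precisely the dimension factor in the numerator from Definition \ref{def:proj}. Once these cancellations are verified, the lemma is immediate from the Schur branching rule, and there is no real obstacle beyond this bookkeeping.
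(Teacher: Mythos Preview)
Your proof is correct and follows essentially the same outline as the paper: unwind the definition of $S_{\pi_{N,N-1}\delta_\lambda}$, cancel the factor $s_\mu(1^{N-1})=\dim V_{N-1}^\mu$, and identify the remaining sum with $s_\lambda(x_1,\ldots,x_{N-1},1)/s_\lambda(1^N)$.

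The one place where the paper's argument differs slightly is in the final identification. You invoke the explicit multiplicity-free branching rule $m_\mu=\mathbf{1}_{\mu\prec\lambda}$ and then the combinatorial identity $s_\lambda(x_1,\ldots,x_{N-1},1)=\sum_{\mu\prec\lambda}s_\mu(x_1,\ldots,x_{N-1})$. The paper instead leaves the multiplicities $m_\mu$ abstract and simply observes that $\sum_\mu m_\mu\, s_\mu(x_1,\ldots,x_{N-1})$ is, by definition, the character of $V_N^\lambda|_{\rmU(N-1)}$, which equals $s_\lambda(x_1,\ldots,x_{N-1},1)$ because restricting a character from $\rmU(N)$ to $\rmU(N-1)$ amounts to setting the last variable to $1$. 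This sidesteps any need to know what the $m_\mu$ actually are or to cite the Schur branching rule separately, so it is a hair slicker; on the other hand, your version makes the interlacing structure explicit, which is pleasant since that structure is used elsewhere in the paper. Both routes are standard and the difference is cosmetic.
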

\begin{proof}
Let $V_N^\lambda$ be the irreducible representation of $\rmU(N)$ indexed by $\lambda$. Let it's restriction onto $\rmU(N-1)$ decompose into irreducible representations as
\[V_N^\lambda|_{\rmU(N-1)} = \bigoplus_{\mu \in \GT_{N-1}} m_\mu V_{N-1}^\mu. \]
Then,
\begin{align*}
    S_{\pi_{N,N-1}\delta_\lambda}(x_1,\ldots,x_{N-1}) &= \sum_{\mu\in\GT_{N-1}}\frac{m_\mu\dim(V_{N-1}^\mu)}{\dim (V_N^\lambda|_{\rmU(N-1)})}\frac{s_\mu(x_1,\ldots,x_{N-1})}{s_\mu(1^{N-1})} \\
    &= \sum_{\mu\in\GT_{N-1}}\frac{m_\mu}{\dim (V_N^\lambda|_{\rmU(N-1)})}s_\mu(x_1,\ldots,x_{N-1}).
\end{align*}
This is just the normalized character of $V_N^\lambda|_{\rmU(N-1)}$ due to the fact that characters add under direct sums. Therefore, the proof is complete, since the normalized character is also
\[\frac{s_\lambda(x_1,\ldots,x_{N-1},1)}{s_\lambda(1^N)}.\]
\end{proof}
We are now in position to prove Theorem \ref{thm:opmom}.
\begin{proof}[Proof of Theorem \ref{thm:opmom}]
The key to this theorem is that Schur functions $s_\lambda$ are eigenfunctions of $\cD_{N,k}$. In particular, for any $\lambda\in\GT_N$, we have
\[\cD_{N,k}s_\lambda(x_1,\ldots,x_N) = \left(\sum_{i=1}^N(\lambda_i+N-i)^k\right)s_\lambda(x_1,\ldots,x_N).\]
This is shown by a straightforward calculation with the determinant definition of Schur functions. We have that
\[\E\left[\left(\int_\R x^km[\rh](dx)\right)^n\right]=\sum_{\lambda\in\GT_N}\rh(\lambda)\left(\frac{1}{N}\sum_{i=1}^N\left(\frac{\lambda_i+N-i}{N}\right)^k\right)^n.\]
By the eigenfunction observation, this can be written as
\[\E\left[\left(\int_\R x^km[\rh](dx)\right)^n\right]=\frac{1}{N^{n(k+1)}}\sum_{\lambda\in\GT_N}\rh(\lambda)\frac{\cD_{N,k}^n s_\lambda(x_1,\ldots,x_N)}{s_\lambda(x_1,\ldots,x_N)}.\]
Evaluating the right side at $x_1=\cdots=x_N=1$, the first result is immediate by definition of $S_\rh$.

We will now show the second result. By similar logic as above, we may set
\begin{align*}
    T(x_1,\ldots,x_N)&:=\frac{1}{N^{\ell k}}\cD_{N,k}^\ell\sum_{\lambda\in\GT_N}\rh(\lambda)\frac{s_\lambda(x_1,\ldots,x_N)}{s_\lambda(1^N)}\\
    &= \sum_{\lambda\in\GT_N}\rh(\lambda)\left(\sum_{i=1}^N\left(\frac{\lambda_i+N-i}{N}\right)^k\right)^\ell\frac{s_\lambda(x_1,\ldots,x_N)}{s_\lambda(1^N)}.
\end{align*}
Our goal is to evaluate
\[\left.\frac{1}{N^{(n-\ell)k}}\cD_{N-1,k}^{n-\ell}T(x_1,\ldots,x_N)\right|_{x_i=1}.\]
Since $\cD_{N-1,k}$ doesn't act on the variable $x_N$, we may evaluate the above expression with $T(x_1,\ldots,x_N)$ replaced by $T(x_1,\ldots,x_{N-1},1)$. By Lemma \ref{lem:ESGF}, 
\begin{align*}
&\frac{1}{N^{(n-\ell)k}}\cD_{N-1,k}^{n-\ell}\frac{s_\lambda(x_1,\ldots,x_{N-1},1)}{s_\lambda(1^N)}\\
=&\frac{1}{N^{(n-\ell)k}}\cD_{N-1,k}^{n-\ell}\sum_{\mu\in\GT_{N-1}}\pi_{N,N-1}\delta_\lambda(\mu)\frac{s_\mu(x_1,\ldots,x_{N-1})}{s_\mu(1^{N-1})} \\
=&\sum_{\mu\in\GT_{N-1}}\pi_{N,N-1}\delta_\lambda(\mu)\left(\sum_{i=1}^{N-1}\left(\frac{\mu_i+N-1-i}{N}\right)^k\right)^{n-\ell}\frac{s_\mu(x_1,\ldots,x_{N-1})}{s_\mu(1^{N-1})}, \end{align*}
so
\begin{align*}
&\left.\frac{1}{N^{(n-\ell)k}}\cD_{N-1,k}^{n-\ell}T(x_1,\ldots,x_{N-1},1)\right|_{x_i=1} \\
=& \sum_{\lambda\in\GT_N}\rh(\lambda)\left(\sum_{i=1}^N\left(\frac{\lambda_i+N-i}{N}\right)^k\right)^\ell\sum_{\mu\in\GT_{N-1}}\pi_{N,N-1}\delta_\lambda(\mu)\left(\sum_{i=1}^{N-1}\left(\frac{\mu_i+N-1-i}{N}\right)^k\right)^{n-\ell} \\
=&\sum_{\substack{\lambda\in\GT_N \\ \mu\in\GT_{N-1}}}\rh(\lambda)\pi_{N,N-1}\delta_\lambda(\mu)\left(\sum_{i=1}^N\left(\frac{\lambda_i+N-i}{N}\right)^k\right)^\ell\left(\sum_{i=1}^{N-1}\left(\frac{\mu_i+N-1-i}{N}\right)^k\right)^{n-\ell}.
\end{align*}
The theorem now follows by the binomial theorem.
\end{proof}

\subsection{Two Lemmas}
We introduce two lemmas that will be useful in our analysis of the moments of the counting measure. In particular, they help to reduce the number of variables in the calculation. Before proceeding, we introduce the following notation.

Given a function $f(z_1,\ldots,z_n)$, define
\[\cyc f(z_1,\ldots,z_n):=f(z_1,\ldots,z_n)+f(z_2,\ldots,z_n,z_1)+\cdots+f(z_n,z_1,\ldots,z_{n-1}).\]
\begin{lemma}[\cite{BuG1}*{Lemma 5.5}]
\label{lem:BG 5.5}
Let $g(z)$ be a function analytic in some neighborhood of $z=1$. Then
\[\lim_{z_i\to 1}\cyc \frac{g(z_1)}{(z_1-z_2)(z_1-z_3)\cdots(z_1-z_n)} = \left.\frac{1}{(n-1)!}\frac{\del^{n-1}g(z)}{\del z^{n-1}}\right|_{z=1}.\]
\end{lemma}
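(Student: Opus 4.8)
The plan is to recognize the cyclic sum as a divided difference of $g$ at the nodes $z_1,\dots,z_n$ and then compute its coalescent limit via a single contour integral. Expanding the cyclic sum, the term coming from the shift that places $z_j$ in the first slot is $g(z_j)/\prod_{i\ne j}(z_j-z_i)$, so
\[
\cyc \frac{g(z_1)}{(z_1-z_2)\cdots(z_1-z_n)}=\sum_{j=1}^n\frac{g(z_j)}{\prod_{i\ne j}(z_j-z_i)},
\]
which is precisely the $(n-1)$-st order divided difference $g[z_1,\dots,z_n]$. Here one takes $z_1,\dots,z_n$ distinct, so that the left-hand side is literally defined before the limit is performed.

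First I would apply the Cauchy integral formula termwise. Fix a small positively oriented circle $\gamma$ centered at $z=1$ and contained in the domain of holomorphy of $g$, and restrict to $z_1,\dots,z_n$ in the open disk bounded by $\gamma$. Then $g(z_j)=\frac{1}{2\pi\i}\oint_\gamma\frac{g(w)}{w-z_j}\,dw$, and interchanging the finite sum with the integral gives
\[
\sum_{j=1}^n\frac{g(z_j)}{\prod_{i\ne j}(z_j-z_i)}=\frac{1}{2\pi\i}\oint_\gamma g(w)\Biggl(\sum_{j=1}^n\frac{1}{(w-z_j)\prod_{i\ne j}(z_j-z_i)}\Biggr)\,dw.
\]
Next comes the elementary partial-fraction identity
\[
\sum_{j=1}^n\frac{1}{(w-z_j)\prod_{i\ne j}(z_j-z_i)}=\frac{1}{\prod_{i=1}^n(w-z_i)},
\]
valid for $w\notin\{z_1,\dots,z_n\}$: both sides are rational in $w$, vanish at $\infty$, and have the same simple poles with the same residues $1/\prod_{i\ne j}(z_j-z_i)$ at $w=z_j$, hence agree. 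This collapses the cyclic sum to $\frac{1}{2\pi\i}\oint_\gamma\frac{g(w)}{\prod_{i=1}^n(w-z_i)}\,dw$.

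Finally I would let $z_i\to1$. Since all the $z_i$ remain inside $\gamma$ while $w$ runs over $\gamma$, the integrand converges uniformly on $\gamma$ to $g(w)/(w-1)^n$, so the limit passes under the integral sign and
\[
\lim_{z_i\to1}\cyc\frac{g(z_1)}{(z_1-z_2)\cdots(z_1-z_n)}=\frac{1}{2\pi\i}\oint_\gamma\frac{g(w)}{(w-1)^n}\,dw=\frac{1}{(n-1)!}\,g^{(n-1)}(1),
\]
the last equality being Cauchy's formula for higher derivatives. The computation is essentially routine; the only point that needs a word of justification is the interchange of limit and integral, which is immediate from uniform convergence once $\gamma$ is fixed away from $z=1$. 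Alternatively one could invoke the Hermite--Genocchi representation $g[z_1,\dots,z_n]=\int_{\Delta_{n-1}}g^{(n-1)}(t_1z_1+\cdots+t_nz_n)\,dt$ together with $\mathrm{vol}(\Delta_{n-1})=1/(n-1)!$, but the contour-integral route keeps the whole argument inside one-variable complex analysis.
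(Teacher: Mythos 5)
Your argument is correct and self-contained. The paper does not prove this lemma itself; it is imported verbatim from Bufetov--Gorin \cite{BuG1}*{Lemma 5.5}, so there is no in-paper proof to compare against. Your route --- identifying the cyclic sum with the divided difference $g[z_1,\dots,z_n]=\sum_j g(z_j)/\prod_{i\ne j}(z_j-z_i)$, rewriting it as the contour integral $\frac{1}{2\pi\i}\oint_\gamma g(w)\prod_i(w-z_i)^{-1}\,dw$ via Cauchy's formula and the standard partial-fraction collapse, and then passing to the coalescent limit under the (fixed) contour --- is a clean one-variable complex-analysis derivation, and the one point requiring care, the interchange of $\lim_{z_i\to1}$ with $\oint_\gamma$, is handled correctly by uniform convergence on a contour kept away from $1$. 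The only small stylistic caveat: the verification that the cyclic shifts produce exactly the terms $g(z_j)/\prod_{i\ne j}(z_j-z_i)$ depends on the observation that, starting the cycle at index $j$, every other index appears exactly once in the denominator; you state this implicitly, and it is worth a half-sentence of justification since the identification with the divided difference is what makes the rest of the argument go. Otherwise the proposal is complete and valid.
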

\begin{lemma}
\label{lem:expansion}
For positive integers $k$, we have that
\[\cD_{N,k} = \sum_{m=1}^k\stirling{k}{m}\sum_{\ell=0}^m\binom{m}{\ell}\ell!\sum_{\{i_0,\ldots,i_\ell\}\subseteq[N]}\cyc \frac{x_{i_0}^m\del_{i_0}^{m-\ell}}{(x_{i_0}-x_{i_1})\cdots(x_{i_0}-x_{i_\ell})},\]
where $\stirling{k}{m}$ are Stirling numbers of the second kind.
\end{lemma}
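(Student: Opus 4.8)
The plan is to expand the operator $\cD_{N,k}$ by conjugating the power sum $\sum_i (x_i\del_i)^k$ by the Vandermonde $\Delta = \prod_{i<j}(x_i - x_j)$ and tracking exactly how many derivatives land on $\Delta$ versus remain free. First I would reduce from $k$-th powers to falling factorials: since $(x\del)^k = \sum_{m=1}^k \stirling{k}{m} x^m \del^m$ as operators in one variable (the standard identity relating the operator $x\del$ to $x^m\del^m$ via Stirling numbers of the second kind), it suffices to compute $\Delta^{-1} x_i^m \del_i^m \Delta$ for each fixed $i$ and $m$ and then sum over $i$ and over $m$ with weights $\stirling{k}{m}$. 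This immediately produces the outer sum $\sum_{m=1}^k \stirling{k}{m}$.

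Next I would carry out the conjugation $\Delta^{-1}\del_i^m \Delta$ for a single variable $x_i$. Write $\Delta = P_i(x_i) Q_i$ where $P_i(x_i) = \prod_{j \ne i}(x_i - x_j)$ collects all factors involving $x_i$ and $Q_i$ is independent of $x_i$. By the Leibniz rule, $\del_i^m (\Delta \cdot f) = Q_i \sum_{\ell=0}^m \binom{m}{\ell} \big(\del_i^{m-\ell} P_i\big)\big(\del_i^{\ell} f\big)$ — wait, I want the derivatives that hit $P_i$ to be few, so I would instead split as $\del_i^m(P_i f) = \sum_{\ell=0}^m \binom{m}{\ell}(\del_i^\ell P_i)(\del_i^{m-\ell} f)$, giving
\[
\Delta^{-1} x_i^m \del_i^m \Delta = \sum_{\ell=0}^m \binom{m}{\ell} x_i^m \frac{\del_i^\ell P_i}{P_i}\, \del_i^{m-\ell}.
\]
The remaining task is to identify $\del_i^\ell P_i / P_i$. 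Since $P_i(x_i) = \prod_{j \ne i}(x_i - x_j)$ is a polynomial with simple roots $\{x_j\}_{j \ne i}$, its logarithmic derivative and higher analogues have a clean combinatorial form: $\del_i^\ell P_i / P_i = \ell! \sum_{\{j_1,\dots,j_\ell\} \subseteq [N]\setminus\{i\}} \prod_{t=1}^\ell \frac{1}{x_i - x_{j_t}}$, which is exactly the $\ell$-th complete-to-elementary type identity for derivatives of a product of linear forms (provable by induction on $\ell$, or by expanding $\log P_i = \sum_{j\ne i}\log(x_i - x_j)$ and using the exponential formula, noting that for simple roots no repeated-index terms survive). Substituting this and relabeling $i = i_0$, $\{j_1,\dots,j_\ell\} = \{i_1,\dots,i_\ell\}$ yields the summand $\ell!\, x_{i_0}^m \del_{i_0}^{m-\ell} / \big((x_{i_0}-x_{i_1})\cdots(x_{i_0}-x_{i_\ell})\big)$, and summing over the distinguished index $i_0$ together with the unordered set $\{i_1,\dots,i_\ell\}$ is precisely the operation $\sum_{\{i_0,\dots,i_\ell\}\subseteq[N]} \cyc$ applied to that summand (the cyclic sum over an $(\ell+1)$-element set records each choice of which element plays the role of $i_0$). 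Assembling the three sums — over $m$ with weight $\stirling{k}{m}$, over $\ell$ with weight $\binom{m}{\ell}\ell!$, and the set-plus-cyclic sum — gives the claimed formula.

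The main obstacle is the bookkeeping in the identity for $\del_i^\ell P_i / P_i$: one must verify that, because the roots $x_j$ are formally distinct, only squarefree products of the $\frac{1}{x_i - x_j}$ appear (terms like $(x_i-x_j)^{-2}$ that would arise from a naive multinomial expansion of $(\log P_i)'$ cancel against cross terms), and that the combinatorial coefficient is exactly $\ell!$. I would handle this by a clean induction on $\ell$: differentiating $\sum_{|S|=\ell}\prod_{j\in S}(x_i-x_j)^{-1}$ once produces $-\sum_{|S|=\ell}\sum_{j\in S}(x_i-x_j)^{-1}\prod_{j'\in S}(x_i-x_{j'})^{-1}$, and one checks this reorganizes into $(\ell+1)!/\ell!$ times the sum over $(\ell+1)$-sets plus lower-order terms that combine correctly with the product-rule contribution from the $P_i^{-1}$ prefactor — a short but slightly delicate computation. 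A secondary point to state carefully is that the outer identity $(x\del)^k = \sum_m \stirling{k}{m} x^m\del^m$ is an identity of differential operators, valid termwise under the Vandermonde conjugation because conjugation is an algebra homomorphism; everything else is formal manipulation with no convergence issues since we work with polynomial/rational operators.
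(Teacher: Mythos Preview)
Your approach is essentially identical to the paper's: reduce via the Stirling identity $(x\del)^k=\sum_m\stirling{k}{m}x^m\del^m$, apply the Leibniz rule to $\del_i^m(\Delta\cdot)$, compute $\del_i^\ell P_i/P_i$ (equivalently $\del_p^\ell\Delta/\Delta$), and repackage the sum over a distinguished index plus an $\ell$-subset as a sum over $(\ell{+}1)$-subsets with $\cyc$. The ``obstacle'' you flag is simpler than you suggest: since $P_i=\prod_{j\ne i}(x_i-x_j)$ is a product of \emph{linear} factors in $x_i$, the generalized Leibniz rule immediately gives $\del_i^\ell P_i = \ell!\sum_{|S|=\ell}\prod_{j\notin S\cup\{i\}}(x_i-x_j)$ (any factor hit twice vanishes), so no induction or cancellation argument is needed.
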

\begin{proof}
Let $\Delta(x)=\prod_{i<j}(x_i-x_j)$ denote the Vandermonde determinant. We first compute $\del_p^m\Delta(x)$. By the Leibniz rule, we have
\[\del_p^m\Delta(x) = \sum_{\substack{(k_{ij})_{1\le i<j\le N} \\k_{i,j}\in\mathbb{Z}_{\ge 0} \\ \sum k_{i,j}=m}}\binom{m}{k_{1,2},\ldots,k_{N-1,N}}\prod_{i<j}\del_p^{k_{i,j}}(x_i-x_j).\]
With some work, this reduces to
\[\del_p^m\Delta(x) = m!\prod_{i<j}(x_i-x_j)\sum_{\substack{S\subseteq[N]\setminus\{p\}\\|S|=m}}\frac{1}{\displaystyle\prod_{i\in S}(x_p-x_i)}.\]
It is well known that
\[\sum_{i=1}^N (x_i \partial_i)^k = \sum_{i=1}^N\sum_{m=1}^k\stirling{k}{m}x_i^m\del_i^m,\]
so in fact
\begin{align*}
\cD_{N,k} &= \Delta(x)^{-1}\sum_{m=1}^k\stirling{k}{m}\sum_{i=1}^N x_i^m\sum_{\ell=0}^m\binom{m}{\ell}(\del_i^\ell\Delta(x))\del_i^{m-\ell} \\
&= \sum_{m=1}^k\stirling{k}{m}\sum_{\ell=0}^m\binom{m}{\ell}\ell!\sum_{i=1}^N x_i^m\sum_{\substack{S\subseteq[N]\setminus\{i\}\\|S|=\ell}}\left(\prod_{j\in S}\frac{1}{x_i-x_j}\right)\del_i^{m-\ell}.
\end{align*}
But we have that
\[\sum_{i=1}^N x_i^m\sum_{\substack{S\subseteq[N]\setminus\{i\}\\|S|=\ell}}\left(\prod_{j\in S}\frac{1}{x_i-x_j}\right)\del_i^{m-\ell} = \sum_{\{i_0,\ldots,i_\ell\}\subseteq[N]}\cyc \frac{x_{i_0}^m\del_{i_0}^{m-\ell}}{(x_{i_0}-x_{i_1})\cdots(x_{i_0}-x_{i_\ell})},\]
which completes the proof.
\end{proof}

\subsection{Proof of Theorem \ref{thm:momreg}}
Note that the proof presented here is very similar to that given in \cite{BuG1}*{Section 5.2}. It is necessary to show
\begin{align}\label{eq:firstorder}
\lim_{N\to\infty}\E\left(\int_\R x^k m[\rho_N](dx)\right)=\sum_{\ell=0}^k\frac{1}{(\ell+1)!}\binom{k}{\ell}\left.\left(\frac{\del}{\del z}\right)^\ell\left(z^k H'(z)^{k-\ell}\right)\right|_{z=1}
\end{align}
and
\begin{align}\label{eq:secondorder}\lim_{N\to\infty}\E\left[\left(\int_\R x^k m[\rho_N](dx)\right)^2\right] = \lim_{N\to\infty}\left[\E\left(\int_\R x^k m[\rho_N](dx)\right)\right]^2.\end{align}
First, we will show (\ref{eq:firstorder}). By Theorem \ref{thm:opmom}, 
\begin{align}\label{eq:4}
\E\left(\int_\R x^k m[\rho_N](dx)\right) = \frac{1}{N^{k+1}}\lim_{x_i\to 1}\cD_{N,k}S_{\rho_N}(x_1,\ldots,x_N).
\end{align}
Define $T_N(x_1,\ldots,x_N)$ by 
\begin{align}\label{eq:almost_mult}
S_{\rho_N}(x_1,\ldots,x_N)=\exp\left(\sum_{i=1}^N NH(x_i)\right)T_N(x_1,\ldots,x_N),
\end{align}
where $T_N$ is analytic in some open neighborhood of $(1^N)$. By Definition \ref{def:LLNA}, $T_N(1^N)=1$ and, for any fixed $k$,
\[\lim_{N\to\infty}\frac{1}{N}\log T_N(x_1,\ldots,x_k,1^{N-k})=0\] uniformly in some open neighborhood of $(1^k)$. Differentiating the above result, we have
\begin{align}\label{eq:deriv_die}
\lim_{N\to\infty}\frac{1}{N}\frac{\del_1^{a_1}\cdots\del_k^{a_k} T_N(x_1,\ldots,x_k,1^{N-k})}{T_N(x_1,\ldots,x_k,1^{N-k})}=0
\end{align}
for any $(a_1,\ldots,a_k)\in\Z_{\ge 0}^k$. By Lemma \ref{lem:expansion}, $\cD_{N,k}S_{\rho_N}$ is
\begin{align}\label{eq:7}
\cD_{N,k}S_{\rho_N} = \sum_{m=1}^k\stirling{k}{m}\sum_{\ell=0}^m\binom{m}{\ell}\ell!\sum_{\{i_0,\ldots,i_\ell\}\subseteq[N]}\cyc \frac{x_{i_0}^m\del_{i_0}^{m-\ell}S_{\rho_N}}{(x_{i_0}-x_{i_1})\cdots(x_{i_0}-x_{i_\ell})}.
\end{align}
We wish to take the limit $x_i\to 1$ of both sides. To do this, consider the following proposition.
\begin{proposition}
\label{prelimit}
The leading order term of
\[L:=\lim_{x_i\to 1}\cyc \frac{x_{i_0}^m\del_{i_0}^{m-\ell}S_{\rho_N}}{(x_{i_0}-x_{i_1})\cdots(x_{i_0}-x_{i_\ell})}\]
is
\[\frac{1}{\ell!}N^{m-\ell}\left.\left(\frac{\del}{\del z}\right)^\ell(z^mH'(z)^{m-\ell})\right|_{z=1}.\]
\end{proposition}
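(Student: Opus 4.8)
The plan is to insert the factorization \eqref{eq:almost_mult}, $S_{\rho_N}=\exp\!\big(\sum_{i=1}^{N} NH(x_i)\big)\,T_N$, and to peel off from $\del_{i_0}^{m-\ell}S_{\rho_N}$ the single term of top order $N^{m-\ell}$. Since $S_{\rho_N}$ is holomorphic and nonvanishing near $(1^N)$ with $S_{\rho_N}(1^N)=1$, the ratio $\Psi_{i_0}:=\del_{i_0}^{m-\ell}S_{\rho_N}/S_{\rho_N}$ is holomorphic near $(1^N)$; and since $\del_{i_0}$ only touches the factor $e^{NH(x_{i_0})}$ and $T_N$, the Leibniz rule together with Fa\`a di Bruno's formula (with $B_a$ the complete Bell polynomial) gives
\[
\Psi_{i_0}=\sum_{a=0}^{m-\ell}\binom{m-\ell}{a}\,B_a\!\big(NH'(x_{i_0}),\dots,NH^{(a)}(x_{i_0})\big)\,\frac{\del_{i_0}^{\,m-\ell-a}T_N}{T_N}.
\]
I would then split $\Psi_{i_0}=\big(NH'(x_{i_0})\big)^{m-\ell}+\Phi_{i_0}$, where the first summand is the $a=m-\ell$ term carrying the top Bell monomial $(NH')^{m-\ell}$ and the factor $\del_{i_0}^{0}T_N/T_N=1$, and $\Phi_{i_0}$ collects everything else.

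The next step is to verify that $\Phi_{i_0}=o(N^{m-\ell})$ uniformly on a fixed polydisc around $(1^{\ell+1})$ (the remaining $N-\ell-1$ variables set to $1$). This rests on two elementary bounds: in $B_a(NH',\dots,NH^{(a)})$, a monomial $\prod_j(NH^{(j)})^{c_j}$ with $\sum_j jc_j=a$ carries $N$-degree $\sum_j c_j\le a$, with equality only for $(NH')^a$, so the $a=m-\ell$ part minus its top monomial is $O(N^{m-\ell-1})$ uniformly near $(1^{\ell+1})$ (the $H^{(j)}$ being holomorphic near $1$ by the last bullet of Definition \ref{def:LLNA}); and for $a<m-\ell$ one has $B_a=O(N^a)$, while $\del_{i_0}^{m-\ell-a}T_N/T_N=o(N)$ uniformly near $(1^{\ell+1})$ by \eqref{eq:deriv_die} (applied with $\ell+1$ distinguished variables, its convergence being uniform), so the term is $o(N^{a+1})=o(N^{m-\ell})$.

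Finally I would feed this into the cyclic sum. Since $S_{\rho_N}$ is symmetric, it is invariant under the cyclic shift and factors out of $\cyc$, and splitting $\Psi_{i_0}$ as above yields
\begin{align*}
\cyc\frac{x_{i_0}^m\del_{i_0}^{m-\ell}S_{\rho_N}}{\prod_{a=1}^{\ell}(x_{i_0}-x_{i_a})}
&=S_{\rho_N}\cyc\frac{x_{i_0}^m\,\Psi_{i_0}}{\prod_{a=1}^{\ell}(x_{i_0}-x_{i_a})}\\
&=S_{\rho_N}\!\left[\,N^{m-\ell}\cyc\frac{x_{i_0}^m H'(x_{i_0})^{m-\ell}}{\prod_{a=1}^{\ell}(x_{i_0}-x_{i_a})}+\cyc\frac{x_{i_0}^m\,\Phi_{i_0}}{\prod_{a=1}^{\ell}(x_{i_0}-x_{i_a})}\right].
\end{align*}
Letting all $x_i\to1$: the prefactor tends to $S_{\rho_N}(1^N)=1$; the numerator $g(w)=w^m H'(w)^{m-\ell}$ of the middle sum depends on a single variable and is holomorphic near $w=1$, so Lemma \ref{lem:BG 5.5} with $n=\ell+1$ evaluates its limit, giving the contribution $\tfrac{1}{\ell!}N^{m-\ell}\left.\big(\tfrac{\del}{\del z}\big)^{\ell}\big(z^mH'(z)^{m-\ell}\big)\right|_{z=1}$ — exactly the claimed leading term; and the limit of $\cyc x_{i_0}^m\Phi_{i_0}/\prod_a(x_{i_0}-x_{i_a})$, which exists since all other pieces converge, is a fixed linear combination, with universal coefficients depending only on $\ell$, of partial derivatives of order $\le\ell$ of $x_{i_0}^m\Phi_{i_0}$ and of its cyclic shifts at $(1^{\ell+1})$ — a routine extension of Lemma \ref{lem:BG 5.5} — so by the Cauchy estimates applied to the uniform bound $\Phi_{i_0}=o(N^{m-\ell})$, it is itself $o(N^{m-\ell})$. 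Hence $L=\tfrac{1}{\ell!}N^{m-\ell}\left.\big(\tfrac{\del}{\del z}\big)^{\ell}\big(z^mH'(z)^{m-\ell}\big)\right|_{z=1}+o(N^{m-\ell})$, which is the assertion.

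The step I expect to be the main obstacle is precisely this last passage to the confluent limit in the presence of the poles $x_{i_0}=x_{i_a}$. The key structural point making it work is that the symmetric prefactor $S_{\rho_N}$ — which is only $e^{O(N)}$, not $O(1)$, on a fixed polydisc — must be extracted \emph{before} any divided-difference manipulation, using nothing but its cyclic invariance and the value $S_{\rho_N}(1^N)=1$; only then are the remaining factors genuinely $o(N^{m-\ell})$ on a fixed polydisc, which is what lets the Cauchy-estimate bound survive the limit. The two precise inputs are the uniformity in \eqref{eq:deriv_die} and the routine fact that the cyclic-divided-difference limit is a bounded linear functional on holomorphic functions on a fixed polydisc; the Bell-polynomial bookkeeping is elementary.
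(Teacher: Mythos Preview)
Your proposal is correct and follows essentially the same approach as the paper: both extract the leading term $(NH')^{m-\ell}S_{\rho_N}$ from $\del_{i_0}^{m-\ell}S_{\rho_N}$ via the factorization \eqref{eq:almost_mult} and \eqref{eq:deriv_die}, factor out the symmetric $S_{\rho_N}$ from the cyclic sum, apply Lemma~\ref{lem:BG 5.5} to the single-variable numerator $z^m H'(z)^{m-\ell}$, and use $S_{\rho_N}(1^N)=1$. Your version is more explicit---the Bell-polynomial bookkeeping, the uniform $o(N^{m-\ell})$ bound on $\Phi_{i_0}$, and the Cauchy-estimate control of the remainder through the confluent limit are all spelled out, whereas the paper compresses these into the phrase ``the leading order term of''---but the structure is the same.
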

\begin{proof}
Since all functions are symmetric, we may assume $\{i_0,\ldots,i_\ell\}=\{1,\ldots,\ell+1\}$. For $r\ge \ell+2$, we can set $x_r=1$, so
\[L=\lim_{x_1,\ldots,x_{\ell+1}\to 1}\cyc\frac{x_1^m\del_1^{m-\ell}S_{\rho_N}(x_1,\ldots,x_{\ell+1},1^{N-\ell-1})}{(x_{1}-x_{2})\cdots(x_{1}-x_{\ell+1})}.\]
From (\ref{eq:almost_mult}) and (\ref{eq:deriv_die}), the leading order term of
\[\del_1^{m-\ell}S_{\rho_N}(x_1,\ldots,x_{\ell+1},1^{N-\ell-1})\]
is $N^{m-\ell}H'(x_1)^{m-\ell}S_{\rho_N}(x_1,\ldots,x_{\ell+1},1^{N-\ell-1})$. Thus, the leading order term of $L$ is
\[N^{m-\ell}\lim_{x_1,\ldots,x_{\ell+1}\to 1}\cyc\frac{x_1^mH'(x_1)^{m-\ell}}{(x_{1}-x_{2})\cdots(x_{1}-x_{\ell+1})}S_{\rho_N}(x_1,\ldots,x_{\ell+1},1^{N-\ell-1}).\]
Applying Lemma \ref{lem:BG 5.5} and noting that $S_{\rho_N}(1^N)=1$ yields the desired result.
\end{proof}
Therefore, the leading order term of the right side of (\ref{eq:7}) under the limit $x_i\to 1$ is
\begin{align}\label{eq:setcount}
\left.\sum_{m=1}^k\stirling{k}{m}\sum_{\ell=0}^m\binom{m}{\ell}\binom{N}{\ell+1}N^{m-\ell}\left(\frac{\del}{\del z}\right)^\ell(z^mH'(z)^{m-\ell})\right|_{z=1}.
\end{align}
Since the order of the summand is $N^{\ell+1}N^{m-\ell}=N^{m+1}$, the only contribution in the limit comes from $m=k$, so the leading order term is
\[\left.N^{k+1}\sum_{\ell=0}^k\binom{k}{\ell}\frac{1}{(\ell+1)!}\left(\frac{\del}{\del z}\right)^\ell(z^mH'(z)^{m-\ell})\right|_{z=1}.\]
Thus, by (\ref{eq:4}), 
\[\lim_{N\to\infty}\E\left(\int_\R x^k m[\rho_N](dx)\right)=\sum_{\ell=0}^k\frac{1}{(\ell+1)!}\binom{k}{\ell}\left.\left(\frac{\del}{\del z}\right)^\ell\left(z^k H'(z)^{k-\ell}\right)\right|_{z=1},\]
as desired.

We will now show (\ref{eq:secondorder}). It suffices to show that the leading order terms of $(\cD_{N,k}S_{\rho_N})^2$ and $(\cD_{N,k})^2S_{\rho_N}$ match up when we take $x_i\to 1$. We see that
\begin{align*}
(\cD_{N,k}S_{\rho_N})^2 =& \sum_{m=1}^k\sum_{m'=1}^k\stirling{k}{m}\stirling{k}{m'}\sum_{\ell=0}^m\sum_{\ell'=0}^{m'}\binom{m}{\ell}\ell!\binom{m'}{\ell'}\ell'!\\
&\sum_{\{i_0,\ldots,i_\ell\}\subseteq[N]}\sum_{\{i_0',\ldots,i_{\ell'}'\}\subseteq[N]}\cyc\cyc \frac{(x_{i_0}^m\del_{i_0}^{m-\ell}S_{\rho_N})(x_{i_0'}^{m'}\del_{i_0'}^{m'-\ell'}S_{\rho_N})}{(x_{i_0}-x_{i_1})\cdots(x_{i_0}-x_{i_\ell})(x_{i_0'}-x_{i_1'})\cdots(x_{i_0'}-x_{i_\ell'})}
\end{align*}
and
\begin{align*}
\numberthis \label{eq:Dnk2}(\cD_{N,k})^2S_{\rho_N} =& \sum_{m=1}^k\sum_{m'=1}^k\stirling{k}{m}\stirling{k}{m'}\sum_{\ell=0}^m\sum_{\ell'=0}^{m'}\binom{m}{\ell}\ell!\binom{m'}{\ell'}\ell'!\\
&\sum_{\{i_0,\ldots,i_\ell\}\subseteq[N]}\sum_{\{i_0',\ldots,i_{\ell'}'\}\subseteq[N]}\cyc\cyc \frac{(x_{i_0}^m\del_{i_0}^{m-\ell}x_{i_0'}^{m'}\del_{i_0'}^{m'-\ell'})S_{\rho_N}}{(x_{i_0}-x_{i_1})\cdots(x_{i_0}-x_{i_\ell})(x_{i_0'}-x_{i_1'})\cdots(x_{i_0'}-x_{i_\ell'})}.
\end{align*}
The only difference in the two is the numerator of the final summand. We have the following claim that is an analogue of Proposition \ref{prelimit}.
\begin{proposition}
\label{prop:equal_leading_terms}
The leading order terms of
\[\lim_{x_i\to 1}\cyc\cyc \frac{(x_{i_0}^m\del_{i_0}^{m-\ell}S_{\rho_N})(x_{i_0'}^{m'}\del_{i_0'}^{m'-\ell'}S_{\rho_N})}{(x_{i_0}-x_{i_1})\cdots(x_{i_0}-x_{i_\ell})(x_{i_0'}-x_{i_1'})\cdots(x_{i_0'}-x_{i_\ell'})}\]
and
\[\lim_{x_i\to 1}\cyc\cyc \frac{(x_{i_0}^m\del_{i_0}^{m-\ell}x_{i_0'}^{m'}\del_{i_0'}^{m'-\ell'})S_{\rho_N}}{(x_{i_0}-x_{i_1})\cdots(x_{i_0}-x_{i_\ell})(x_{i_0'}-x_{i_1'})\cdots(x_{i_0'}-x_{i_\ell'})}\]
are identical.
\end{proposition}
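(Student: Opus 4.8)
The two displayed quantities differ only in their numerators: the first contains the product $(x_{i_0}^m\del_{i_0}^{m-\ell}S_{\rho_N})(x_{i_0'}^{m'}\del_{i_0'}^{m'-\ell'}S_{\rho_N})$ of two separately differentiated copies of the Schur generating function, whereas the second contains a single copy hit by both strings of operators. The plan is to show that, after extracting the leading power of $N$ and then letting $x_i\to 1$, both reduce to the same closed expression, which (in parallel with Proposition~\ref{prelimit}) will be
\[\frac{1}{\ell!\,\ell'!}\,N^{(m-\ell)+(m'-\ell')}\left.\left(\frac{\del}{\del z}\right)^{\ell}\!\bigl(z^{m}H'(z)^{m-\ell}\bigr)\right|_{z=1}\left.\left(\frac{\del}{\del z}\right)^{\ell'}\!\bigl(z^{m'}H'(z)^{m'-\ell'}\bigr)\right|_{z=1}.\]

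First I would record the estimate behind Proposition~\ref{prelimit}: by \eqref{eq:almost_mult}--\eqref{eq:deriv_die}, for every index $i$ and every $a\ge 0$ the leading-order term in $N$ of $\del_i^{\,a}S_{\rho_N}$ is $N^{a}H'(x_i)^{a}S_{\rho_N}$, uniformly for the variables in a fixed complex neighbourhood of $1$. Applying this to each factor of the first numerator gives, to leading order,
\[x_{i_0}^{m}x_{i_0'}^{m'}\,N^{(m-\ell)+(m'-\ell')}\,H'(x_{i_0})^{m-\ell}H'(x_{i_0'})^{m'-\ell'}\,S_{\rho_N}^{2}.\]
For the second numerator I would apply the same estimate twice. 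If $i_0\neq i_0'$, then $\del_{i_0}$ commutes both with $\del_{i_0'}$ and with multiplication by $x_{i_0'}^{m'}$, so $\del_{i_0}^{m-\ell}\bigl(x_{i_0'}^{m'}\del_{i_0'}^{m'-\ell'}S_{\rho_N}\bigr)$ has the leading term $x_{i_0'}^{m'}N^{(m-\ell)+(m'-\ell')}H'(x_{i_0})^{m-\ell}H'(x_{i_0'})^{m'-\ell'}S_{\rho_N}$; if $i_0=i_0'$, one first merges the two derivative strings into $\del_{i_0}^{(m-\ell)+(m'-\ell')}$ and obtains the same leading term. So the second numerator matches the first one \emph{except} that $S_{\rho_N}^{2}$ is replaced by $S_{\rho_N}$.

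Next I would substitute this into the iterated cyclic sum $\cyc\cyc$ and the limit $x_i\to 1$. Since $S_{\rho_N}$ is symmetric, it is fixed by every cyclic shift appearing in $\cyc\cyc$, so the factor $S_{\rho_N}$ (respectively $S_{\rho_N}^{2}$) pulls out of both cyclic sums; applying Lemma~\ref{lem:BG 5.5} first over the primed indices and then over the unprimed ones evaluates what is left to the displayed product of the $\del^{\ell}$ and $\del^{\ell'}$ expressions, multiplied by $S_{\rho_N}(1^N)$ (respectively $S_{\rho_N}(1^N)^{2}$). As $S_{\rho_N}(1^N)=1$, these prefactors coincide, and the two leading-order terms are equal.

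The delicate part is not a new idea but the bookkeeping around error terms and overlapping index sets. One must verify that the ``leading-order term of $\del_i^{\,a}S_{\rho_N}$ is $N^{a}H'(x_i)^{a}S_{\rho_N}$'' estimate is uniform enough to commute with the limit $x_i\to 1$ --- exactly the estimate underlying Proposition~\ref{prelimit} and \cite{BuG1}*{Section~5.2}, which can be invoked --- and one must treat the case where $\{i_0,\dots,i_\ell\}$ and $\{i_0',\dots,i_{\ell'}'\}$ overlap, which creates coincident variables and higher-order poles in the summand. The cleanest route is to observe that for fixed \emph{disjoint} index sets all the commutations above hold literally and the argument goes through verbatim, while the overlapping pairs of index sets number only $O(N^{\ell+\ell'+1})$ against $\Theta(N^{\ell+\ell'+2})$ disjoint pairs, so they are subleading in the full double sums $(\cD_{N,k}S_{\rho_N})^2$ and $(\cD_{N,k})^2S_{\rho_N}$ of \eqref{eq:Dnk2} and hence do not affect the leading-order comparison needed for \eqref{eq:secondorder}.
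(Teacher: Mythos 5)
Your proof takes essentially the same route as the paper: identify that the two numerators have the same leading-order expansion in $N$ up to an $S_{\rho_N}^2$ versus $S_{\rho_N}$ factor, and then note that both collapse to $1$ at $(1^N)$. Two points of comparison are worth recording. First, the paper handles the second numerator by a single Leibniz-rule observation rather than your two-case split: it writes the leading order of $(x_{i_0}^m\del_{i_0}^{m-\ell}x_{i_0'}^{m'}\del_{i_0'}^{m'-\ell'})S_{\rho_N}$ as that of $N^{m'-\ell'}x_{i_0}^m\del_{i_0}^{m-\ell}\bigl(x_{i_0'}^{m'}H'(x_{i_0'})^{m'-\ell'}S_{\rho_N}\bigr)$ and then remarks that when $\del_{i_0}^{m-\ell}$ is distributed, only the pieces landing on $S_{\rho_N}$ pick up an extra factor of $N$; this covers $i_0=i_0'$ and $i_0\ne i_0'$ simultaneously. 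Your phrasing that for $i_0=i_0'$ one can ``merge the two derivative strings into $\del_{i_0}^{(m-\ell)+(m'-\ell')}$'' is not literally correct as an operator identity, since $\del_{i_0}^{m-\ell}$ does not commute with multiplication by $x_{i_0}^{m'}$; the leading-order conclusion is still right because the commutator contributions are subleading, which is exactly what the paper's Leibniz-rule sentence makes precise. Second, your final paragraph, treating overlapping index sets by a counting argument ($O(N^{\ell+\ell'+1})$ versus $\Theta(N^{\ell+\ell'+2})$), is a genuinely different move: the proposition as stated is about a \emph{fixed} pair of index sets, overlapping or not, and the paper proves it uniformly via the $S_{\rho_N}$ versus $S_{\rho_N}^2$ comparison, without any need to compare cardinalities of index-set pairs in the outer sums. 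Your counting argument is a valid alternate route to \eqref{eq:secondorder}, but it sidesteps rather than proves the proposition for overlapping sets, and is not needed.
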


\begin{proof}
As in the proof of Proposition \ref{prelimit}, set $x_r=1$ for all $r\not\in\{i_0,\ldots,i_\ell\}\cup\{i_0',\ldots,i_\ell'\}$. Then, using (\ref{eq:almost_mult}) and (\ref{eq:deriv_die}), the leading order term of $(x_{i_0}^m\del_{i_0}^{m-\ell}S_{\rho_N})(x_{i_0'}^{m'}\del_{i_0'}^{m'-\ell'}S_{\rho_N})$ is
\[N^{m-\ell+m'-\ell'}x_{i_0}^mx_{i_0'}^{m'}H'(x_{i_0})^{m-\ell}H'(x_{i_0'})^{m'-\ell'}S_{\rho_N}^2.\]
Now, the leading order term of $(x_{i_0}^m\del_{i_0}^{m-\ell}x_{i_0'}^{m'}\del_{i_0'}^{m'-\ell'})S_{\rho_N}$ is the same as the leading order term of
\[N^{m'-\ell'}x_{i_0}^m\del_{i_0}^{m-\ell}(x_{i_0'}^{m'}H'(x_{i_0'})^{m'-l'}S_{\rho_N}).\]
When applying the product rule, there is a new factor of $N$ after applying the derivative on the $S_{\rho_N}$, and none for any of the other terms. Therefore, the leading order of this is
\[N^{m-\ell+m'-\ell'}x_{i_0}^mx_{i_0'}^{m'}H'(x_{i_0})^{m-\ell}H'(x_{i_0'})^{m'-\ell'}S_{\rho_N}.\]
Taking $x_i\to 1$ and using Lemma \ref{lem:BG 5.5}, we will get the same thing since $S_{\rho_N}(1^N)^2=S_{\rho_N}(1^N)=1$. Thus, the leading order terms match as desired.
\end{proof}
This shows (\ref{eq:secondorder}), and the proof of Theorem \ref{thm:momreg} is complete.

\subsection{Proof of Theorem \ref{thm:momdiff}}

This proof is almost identical to the proof of Theorem \ref{thm:momreg}, except for a few key differences. We will highlight those differences, and the rest of the proof carries through in a similar way.

As in the previous proof, it is required to show
\begin{align}\label{eq:firstorder_diff}
\lim_{N\to\infty}\E\left(\int_\R x^k d[\rho_N](dx)\right)=\sum_{\ell=0}^k\frac{1}{\ell!}\binom{k}{\ell}\left.\left(\frac{\del}{\del z}\right)^\ell\left(z^k H'(z)^{k-\ell}\right)\right|_{z=1}
\end{align}
and
\begin{align}\label{eq:secondorder_diff}\lim_{N\to\infty}\E\left[\left(\int_\R x^k d[\rho_N](dx)\right)^2\right] = \lim_{N\to\infty}\left(\E\left(\int_\R x^k d[\rho_N](dx)\right)\right)^2.\end{align}
First, we will show (\ref{eq:firstorder_diff}). From Theorem \ref{thm:opmom}, 
\begin{align}\label{eq:4_diff}
\E\left(\int_\R x^k d[\rho_N](dx)\right) = \frac{1}{N^k}\lim_{x_i\to 1}(\cD_{N,k}-\cD_{N-1,k})S_{\rho_N}(x_1,\ldots,x_N),
\end{align}
and from Lemma \ref{lem:expansion},
\[\cD_{N,k}-\cD_{N-1,k} = \sum_{m=1}^k\stirling{k}{m}\sum_{\ell=0}^m\binom{m}{\ell}\ell!\sum_{\substack{\{i_0,\ldots,i_\ell\}\subseteq[N]\\N\in\{i_0,\ldots,i_\ell\}}}\cyc \frac{x_{i_0}^m\del_{i_0}^{m-\ell}}{(x_{i_0}-x_{i_1})\cdots(x_{i_0}-x_{i_\ell})}.\]
Thus, the proof of (\ref{eq:firstorder_diff}) is the same as the proof of (\ref{eq:firstorder}), except the step at (\ref{eq:setcount}). Now, instead of $\binom{N}{\ell+1}$, we will have $\binom{N-1}{\ell}$. The loss of order of $N$ by $1$ here is compensated by the same loss in (\ref{eq:4_diff}). However, the factor of $\frac{1}{(\ell+1)!}$ in Theorem \ref{thm:momreg} that came from $\binom{N}{\ell+1}$ becomes $\frac{1}{\ell!}$. Thus, the moments of $d[\rho_N]$ converge to
\[\left.\sum_{\ell=0}^k\frac{1}{\ell!}\binom{k}{\ell}\left(\frac{\del}{\del z}\right)^\ell\left(z^k H'(z)^{k-\ell}\right)\right|_{z=1},\]
as desired.

The modification for (\ref{eq:secondorder_diff}) is slightly more complicated, since $\cD_{N,K}^2$ in (\ref{eq:Dnk2}) is replaced with
\[\cD_{N,k}^2-2\cD_{N-1,k}\cD_{N,k}+\cD_{N-1,k}^2=(\cD_{N,k}-\cD_{N-1,k})^2+(\cD_{N,k}\cD_{N-1,k}-\cD_{N-1,k}\cD_{N,k})\]
instead of just $(\cD_{N,k}-\cD_{N-1,k})^2$. Ignoring the commutator term, we see that (\ref{eq:secondorder_diff}) holds using a similar proof as in Theorem \ref{thm:momreg}, with
\[\frac{1}{N^{2k}}(\cD_{N,k}-\cD_{N-1,k})^2S_{\rho_N}=\frac{1}{N^{2k}}((\cD_{N,k}-\cD_{N-1,k})S_{\rho_N})^2\]
in the limit $N\to\infty$. So, it suffices to show that the commutator term doesn't contribute to the leading order. In other words, it suffices to show that
\begin{align}
\label{eq:lim_commutator}
\lim_{N\to\infty}\frac{1}{N^{2k}}\lim_{x_i\to 1}(\cD_{N,k}\cD_{N-1,k}-\cD_{N-1,k}\cD_{N,k})S_{\rho_N}(x_1,\ldots,x_N)=0.
\end{align}
We see that
\begin{align*}
\numberthis \label{eq:commutator_explicit}(\cD_{N,k}\cD_{N-1,k}-\cD_{N-1,k}\cD_{N,k})S_{\rho_N} &= \sum_{m=1}^k\sum_{m'=1}^k\stirling{k}{m}\stirling{k}{m'}\sum_{\ell=0}^m\sum_{\ell'=0}^{m'}\binom{m}{\ell}\ell!\binom{m'}{\ell'}\ell'!\\
&\left(\sum_{\{i_0,\ldots,i_\ell\}\subseteq[N]}\sum_{\{i_0',\ldots,i_{\ell'}'\}\subseteq[N-1]}-\sum_{\{i_0,\ldots,i_\ell\}\subseteq[N-1]}\sum_{\{i_0',\ldots,i_{\ell'}'\}\subseteq[N]}\right) \\
&\cyc\cyc \frac{(x_{i_0}^m\del_{i_0}^{m-\ell}x_{i_0'}^{m'}\del_{i_0'}^{m'-\ell'})S_{\rho_n}}{(x_{i_0}-x_{i_1})\cdots(x_{i_0}-x_{i_\ell})(x_{i_0'}-x_{i_1'})\cdots(x_{i_0'}-x_{i_\ell'})}.
\end{align*}
Note that if $i_0\ne i_0'$, the operators $x_{i_0}^m\del_{i_0}^{m-\ell}$ and $x_{i_0'}^{m'}\del_{i_0'}^{m'-\ell'}$ commute, so we may restrict our attention to sets $\{i_0,\ldots,i_\ell\}$ and $\{i_0',\ldots,i_{\ell'}'\}$ having nonempty intersection. Furthermore, if both sets $\{i_0,\ldots,i_\ell\}$ and $\{i_0',\ldots,i_{\ell'}'\}$ are contained in $[N-1]$, the two summations cancel. Thus, we may rewrite (\ref{eq:commutator_explicit}) as
\begin{align*}
(\cD_{N,k}\cD_{N-1,k}-\cD_{N-1,k} &  \cD_{N,k})S_{\rho_N} = \sum_{m=1}^k\sum_{m'=1}^k\stirling{k}{m}\stirling{k}{m'}\sum_{\ell=0}^m\sum_{\ell'=0}^{m'}\binom{m}{\ell}\ell!\binom{m'}{\ell'}\ell'!\\
&\sum_{\substack{\{i_0,\ldots,i_\ell\}\subseteq[N] \\ N\in \{i_0,\ldots,i_\ell\} \\ \{i_0',\ldots,i_{\ell'}'\}\subseteq[N-1] \\  \{i_0,\ldots,i_\ell\}\cap\{i_0',\ldots,i_{\ell'}'\}\ne\emptyset}}\cyc\cyc \frac{(x_{i_0}^m\del_{i_0}^{m-\ell}x_{i_0'}^{m'}\del_{i_0'}^{m'-\ell'}-x_{i_0'}^{m'}\del_{i_0'}^{m'-\ell'}x_{i_0}^m\del_{i_0}^{m-\ell})S_{\rho_N}}{(x_{i_0}-x_{i_1})\cdots(x_{i_0}-x_{i_\ell})(x_{i_0'}-x_{i_1'})\cdots(x_{i_0'}-x_{i_\ell'})}.
\end{align*}
We already noted that $(x_{i_0}^m\del_{i_0}^{m-\ell}x_{i_0'}^{m'}\del_{i_0'}^{m'-\ell'}-x_{i_0'}^{m'}\del_{i_0'}^{m'-\ell'}x_{i_0}^m\del_{i_0}^{m-\ell})S_{\rho_N}=0$ if $i_0\ne i_0'$, but even if $i_0=i_0'$, its order is bounded above by $N^{m+m'-\ell-\ell'-1}$ using essentially the same argument as in the proof of Proposition \ref{prop:equal_leading_terms}. The number of pairs of sets satisfying the four conditions in the summation is on the order of $N^{-1}\cdot N^{\ell}\cdot N^{\ell'+1}=N^{\ell+\ell'}$, so the order of
\[\sum_{\substack{\{i_0,\ldots,i_\ell\}\subseteq[N] \\ N\in \{i_0,\ldots,i_\ell\} \\ \{i_0',\ldots,i_{\ell'}'\}\subseteq[N-1] \\  \{i_0,\ldots,i_\ell\}\cap\{i_0',\ldots,i_{\ell'}'\}\ne\emptyset}}\cyc\cyc \frac{(x_{i_0}^m\del_{i_0}^{m-\ell}x_{i_0'}^{m'}\del_{i_0'}^{m'-\ell'}-x_{i_0'}^{m'}\del_{i_0'}^{m'-\ell'}x_{i_0}^m\del_{i_0}^{m-\ell})S_{\rho_N}}{(x_{i_0}-x_{i_1})\cdots(x_{i_0}-x_{i_\ell})(x_{i_0'}-x_{i_1'})\cdots(x_{i_0'}-x_{i_\ell'})}\]
is at most $N^{m+m'-1}$, which is at most $N^{2k-1}$. The sum
\[\sum_{m=1}^k\sum_{m'=1}^k\stirling{k}{m}\stirling{k}{m'}\sum_{\ell=0}^m\sum_{\ell'=0}^{m'}\binom{m}{\ell}\ell!\binom{m'}{\ell'}\ell'!\]
is fixed with respect to $N$, so the order of
\[(\cD_{N,k}\cD_{N-1,k}-\cD_{N-1,k}\cD_{N,k})S_{\rho_N}\]
is at most $N^{2k-1}$. This proves (\ref{eq:lim_commutator}), completing the proof of Theorem \ref{thm:momdiff}.

\section{Proof of Theorem \ref{thm:DMK}}
\label{sec:theproof}
Assume the hypotheses of \Cref{thm:DMK}, and as before, let $H(x)=H_\rho(x)$. For convenience of notation, define
\[m_k:=\int_\R x^k\mathbf{m}(dx)=\left.\sum_{\ell=0}^k\frac{1}{(\ell+1)!}\binom{k}{\ell}\left(\frac{\del}{\del z}\right)^\ell\left(z^k H'(z)^{k-\ell}\right)\right|_{z=1}\]
and
\[d_k := \int_\R x^k\mathbf{d}(dx)=\left.\sum_{\ell=0}^k\frac{1}{\ell!}\binom{k}{\ell}\left(\frac{\del}{\del z}\right)^\ell\left(z^k H'(z)^{k-\ell}\right)\right|_{z=1}. \]
It turns out these can be expressed as contour integrals, which will allow for more convenient computations in the proof of Theorem \ref{thm:DMK}.
\begin{proposition}
\label{prop: mcounter}
The moment $m_k$ can be expressed as the contour integral
\[m_k=\frac{1}{k+1}\oint_1\frac{1}{w}\left(wH'(w)+\frac{w}{w-1}\right)^{k+1}\frac{dw}{2\pi\i},\]
where the contour is counterclockwise around $1$. Similarly, the moment $d_k$ can be expressed as the contour integral
\[d_k = \oint_1\frac{1}{w-1}\left(wH'(w)+\frac{w}{w-1}\right)^k\frac{dw}{2\pi\i},\]
where the contour is counterclockwise around $1$.
\end{proposition}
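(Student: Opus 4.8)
The plan is to read both expressions off the stated contour integrals by expanding the integrands with the binomial theorem and matching against the sums defining $m_k$ and $d_k$. The basic input is Cauchy's differentiation formula: since $\rho$ is LLN-appropriate, $H=H_\rho$ is holomorphic in a neighborhood of $1$, so $w\mapsto w^kH'(w)^{k-\ell}$ is holomorphic on and inside a sufficiently small counterclockwise circle around $1$, and therefore
\[\left.\left(\frac{\del}{\del z}\right)^\ell\!\left(z^kH'(z)^{k-\ell}\right)\right|_{z=1}=\ell!\oint_1\frac{w^kH'(w)^{k-\ell}}{(w-1)^{\ell+1}}\,\frac{dw}{2\pi\i}.\]
All contours below are taken to be such small circles, so that term-by-term integration is legitimate.

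First I would handle $d_k$, which is the cleaner case. Substituting the formula above into the definition of $d_k$ and cancelling the $\ell!$ against $\tfrac{1}{\ell!}$ gives $d_k=\sum_{\ell=0}^k\binom{k}{\ell}\oint_1\frac{w^kH'(w)^{k-\ell}}{(w-1)^{\ell+1}}\,\frac{dw}{2\pi\i}$. On the other hand, the binomial theorem yields
\[\frac{1}{w-1}\left(wH'(w)+\frac{w}{w-1}\right)^{k}=\sum_{\ell=0}^{k}\binom{k}{\ell}\frac{w^kH'(w)^{k-\ell}}{(w-1)^{\ell+1}},\]
and integrating this identity around $1$ term by term recovers exactly $d_k$.

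For $m_k$ the same substitution gives $m_k=\sum_{\ell=0}^k\tfrac{\ell!}{(\ell+1)!}\binom{k}{\ell}\oint_1\frac{w^kH'(w)^{k-\ell}}{(w-1)^{\ell+1}}\,\frac{dw}{2\pi\i}$, and here I would use the bookkeeping identity $\tfrac{\ell!}{(\ell+1)!}\binom{k}{\ell}=\tfrac{k!}{(\ell+1)!(k-\ell)!}=\tfrac{1}{k+1}\binom{k+1}{\ell+1}$; reindexing by $j=\ell+1$ turns the sum into $\tfrac{1}{k+1}\sum_{j=1}^{k+1}\binom{k+1}{j}\oint_1\frac{w^kH'(w)^{k+1-j}}{(w-1)^{j}}\,\frac{dw}{2\pi\i}$. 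Comparing with the binomial expansion
\[\frac{1}{w}\left(wH'(w)+\frac{w}{w-1}\right)^{k+1}=\sum_{j=0}^{k+1}\binom{k+1}{j}\frac{w^kH'(w)^{k+1-j}}{(w-1)^{j}},\]
the only discrepancy is the $j=0$ term $w^kH'(w)^{k+1}$, which is holomorphic near $1$ and hence integrates to $0$ around the contour. Therefore $\tfrac{1}{k+1}\oint_1\tfrac{1}{w}\big(wH'(w)+\tfrac{w}{w-1}\big)^{k+1}\tfrac{dw}{2\pi\i}$ equals the reindexed sum, i.e. $m_k$.

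This argument is essentially a computation; I do not anticipate a genuine obstacle. The only points needing a line of justification are that the contour can be chosen inside the domain of holomorphy of $H'$ provided by LLN-appropriateness (so that Cauchy's formula and termwise integration apply), and the observation for $m_k$ that the "missing" $j=0$ term in the binomial expansion contributes nothing because it is holomorphic at $1$.
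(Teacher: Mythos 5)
Your proof is correct and follows essentially the same route as the paper's: binomial expansion of the integrand, Cauchy's differentiation formula to convert between $\ell$-th derivatives at $1$ and residues, the identity $\tfrac{1}{\ell+1}\binom{k}{\ell}=\tfrac{1}{k+1}\binom{k+1}{\ell+1}$, and the observation that the extra holomorphic term $w^kH'(w)^{k+1}$ contributes nothing. The only cosmetic difference is directional --- you build the contour integral up from the moment formula, while the paper expands the contour integral down to the moment formula --- which does not change the substance.
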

\begin{proof}
Expanding with the binomial theorem yields
\begin{align*}
     \frac{1}{k+1}\oint_1\frac{1}{w}\left(wH'(w)+\frac{w}{w-1}\right)^{k+1}\frac{dw}{2\pi\i}
     = & \frac{1}{k+1}\left(\oint_1w^kH'(w)^{k+1} +\sum_{\ell=0}^k\dbinom{k+1}{\ell+1}\dfrac{w^{k}H'(w)^{k-\ell}}{(w-1)^{\ell+1}}\frac{dw}{2\pi\i}\right)
     \\ = & \sum_{\ell = 0}^k\frac{1}{\ell+1}\dbinom{k}{\ell}\oint_1 \dfrac{w^kH'(w)^{k-\ell}}{(w-1)^{\ell+1}}\frac{dw}{2\pi\i}.
\end{align*}
By Cauchy's Differentiation Formula,
\[\oint_1 \dfrac{w^kH'(w)^{k-\ell}}{(w-1)^{\ell+1}} \frac{dw}{2\pi\i} = \dfrac{\left.\left(\dfrac{\del}{\del z}\right)^\ell\left(z^k H'(z)^{k-\ell}\right)\right|_{z=1}}{\ell!},\]
so
\begin{align*}
     \frac{1}{k+1}\oint_1\frac{1}{w}\left(wH'(w)+\frac{w}{w-1}\right)^{k+1}\frac{dw}{2\pi\i} & =\left.\sum_{\ell=0}^k\frac{1}{(\ell+1)!}\binom{k}{\ell}\left(\frac{\del}{\del z}\right)^\ell\left(z^k H'(z)^{k-\ell}\right)\right|_{z=1},
\end{align*}
which is $m_k$ by \Cref{thm:momreg}.

Similarly,
\begin{align*}
     \oint_1\frac{1}{w-1}\left(wH'(w)+\frac{w}{w-1}\right)^k\frac{dw}{2\pi\i} & = \oint_1\sum_{\ell = 0}^k\dbinom{k}{\ell} \dfrac{w^kH'(w)^{k-\ell}}{(w-1)^{\ell+1}}\frac{dw}{2\pi\i} \\
     & = \sum_{\ell = 0}^k\dbinom{k}{\ell}\oint_1 \dfrac{w^kH'(w)^{k-\ell}}{(w-1)^{\ell+1}}\frac{dw}{2\pi\i} \\
     & = \left.\sum_{\ell=0}^k\frac{1}{\ell!}\binom{k}{\ell}\left(\frac{\del}{\del z}\right)^\ell\left(z^k H'(z)^{k-\ell}\right)\right|_{z=1},
\end{align*}
which is $d_k$ by \Cref{thm:momdiff}.
\end{proof}

To simplify our  expressions, define $F(z)$ and $G(z)$ as 
\[F(z)=\sum_{k=0}^\infty z^k\int_\mathbb{R}x^k \mathbf{m}(dx)=
\sum_{k=0}^\infty m_kz^k\]
and
\[G(z)=\sum_{k=1}^\infty\frac{z^k}{k}\int_\mathbb{R}x^k \mathbf{d}(dx)=
\sum_{k=1}^\infty d_k\frac{z^k}{k},\]
respectively, for the proof of Theorem \ref{thm:DMK}. We have the following lemma that allows us to work with the contour integrals.
\begin{lemma}
\label{lem:inverse}
Let
\[y(w)=\frac{1}{wH'(w)+\frac{w}{w-1}}.\]
This function is locally invertible in a neighborhood of $1$ with inverse
\[w=e^{yF(y)}.\]
\end{lemma}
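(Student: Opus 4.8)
The statement claims two things: that $y(w)$ is locally invertible near $w=1$, and that the inverse is $w = e^{yF(y)}$. The first part should be essentially formal: I would check the behavior of $y(w)$ as $w\to 1$ and verify that $w\mapsto y(w)$ is a local biholomorphism onto a neighborhood of $0$ (since $\frac{w}{w-1}\to\infty$, we have $y(1)=0$, and I'd compute the leading term of $y(w)$ near $w=1$ to see it vanishes simply). Concretely, near $w=1$ write $wH'(w)+\frac{w}{w-1} = \frac{1}{w-1}\bigl(w + (w-1)wH'(w)\bigr)$, whose reciprocal is $y(w) = (w-1)\cdot\frac{1}{w+(w-1)wH'(w)}$; the second factor is holomorphic and nonzero at $w=1$ (it equals $1$ there), so $y(w)$ has a simple zero at $w=1$ and is therefore locally invertible by the holomorphic inverse function theorem, with $w(y)$ holomorphic near $y=0$ and $w(0)=1$.

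**Identifying the inverse.** For the identification $w = e^{yF(y)}$, the natural route is to use Proposition \ref{prop: mcounter} together with a change of variables in the contour integral for $m_k$. Starting from
\[
m_k = \frac{1}{k+1}\oint_1 \frac{1}{w}\,y(w)^{-(k+1)}\,\frac{dw}{2\pi\i},
\]
I would substitute $u = y(w)$, so that the contour becomes a small loop around $u=0$ and $dw = w'(u)\,du$. This turns $m_k$ into $\frac{1}{k+1}\oint_0 \frac{w'(u)}{w(u)}\,u^{-(k+1)}\,\frac{du}{2\pi\i}$, i.e. $m_k = \frac{1}{k+1}\cdot[\text{coeff of }u^{k}\text{ in }\frac{d}{du}\log w(u)]$, equivalently $m_k$ is $\frac{1}{k+1}$ times the $u^k$-coefficient of $(\log w(u))'$. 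Summing, $\sum_{k\ge 0} m_k (k+1) u^k = (\log w(u))'$, while $\frac{d}{du}\bigl(u F(u)\bigr) = \sum_{k\ge 0}(k+1)m_k u^k$ directly from $F(u) = \sum m_k u^k$. Hence $(\log w(u))' = \frac{d}{du}(uF(u))$, and integrating (both sides vanishing appropriately at $u=0$, since $w(0)=1$ so $\log w(0)=0$, and $uF(u)|_{u=0}=0$) gives $\log w(u) = uF(u)$, i.e. $w = e^{uF(u)}$ with $u = y(w)$, which is exactly the claim with $y$ in place of $u$.

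**Anticipated obstacles.** The main technical point to be careful about is the legitimacy of the contour change of variables and the interchange of summation with the contour integral — I need the contour for $m_k$ to be small enough that $y$ is a biholomorphism on a neighborhood containing it, and I need $F(u) = \sum m_k u^k$ to have positive radius of convergence so that the generating-function manipulations are valid as convergent series (not merely formal). The convergence of $F$ near $0$ should follow from the growth bounds on $m_k$ implicit in the LLN-appropriate hypothesis (the moments $m_k$ come from a compactly supported measure $\mathbf{m}$, as asserted in \Cref{thm:DMK}, so $|m_k|$ grows at most geometrically). A secondary subtlety is orientation: the substitution $u=y(w)$ may reverse orientation depending on the sign of $y'(1)$, but since $y(w) = (w-1)(1+O(w-1))$ has $y'(1)=1>0$, orientation is preserved and no sign issue arises. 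Once these analytic points are pinned down, the computation itself is the short generating-function identity above.
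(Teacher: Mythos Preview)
Your proposal is correct and follows essentially the same route as the paper's proof: verify $y'(1)=1$ for local invertibility, then change variables $u=y(w)$ in the contour integral for $m_k$ from Proposition~\ref{prop: mcounter} to read off the Taylor coefficients of $(\log w(u))'$ as $(k+1)m_k$, and integrate using $w(0)=1$ to obtain $\log w(u)=uF(u)$. The paper writes this with $A(y):=\log w$ in place of your $\log w(u)$, but the computation is identical; your discussion of orientation and convergence is slightly more careful than the paper's, which simply asserts that the contour orientation is preserved.
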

\begin{proof}
In a neighborhood of $1$, the function $y(w)$ behaves like $\frac{w-1}{w}$ since $H$ is holomorphic, so $y'(1)=1$, and therefore, the inverse exists by the inverse function theorem. Furthermore, a counterclockwise contour of $w$ around $1$ is equivalent to a counterclockwise contour of $y$ around $0$. Thus, there is some holomorphic function $A(y)$ such that
\[\log w=A(y)\]
for $y$ in some neighborhood of $0$. Then, $\frac{dw}{w}=A'(y)dy$, so
\[m_k=\frac{1}{k+1}\oint_0\frac{1}{y^{k+1}}\frac{A'(y)dy}{2\pi\i},\]
which implies
\[m_k=\frac{1}{(k+1)!}\left[\left(\frac{\partial}{\partial y}\right)^kA'(y)\right]_{y=0}.\]
Since $\displaystyle\sum_{k=0}^\infty m_kz^k=F(z)$, we have
\[A(z)=zF(z),\]
as desired.
\end{proof}
\begin{rmk}
This lemma is essentially the same as combining equations (4.5) and (2.5) form \cite{BuG1}. We opt to prove the lemma directly here since our setup differs slightly from \cite{BuG1}, and our proof avoids mentioning $R$-transforms.
\end{rmk}

Now, we are ready to prove Theorem \ref{thm:DMK}.

\begin{proof}[Proof of Theorem \ref{thm:DMK}]
Using the notation and statement of Lemma \ref{lem:inverse}, we have
\[d_k=\oint_0\frac{e^{yF(y)}}{e^{yF(y)}-1}y^{-k}[F(y)+yF'(y)]\frac{dy}{2\pi\i}=\oint_0y^{-(k+1)}\frac{ye^{yF(y)}}{e^{yF(y)}-1}[F(y)+yF'(y)]\frac{dy}{2\pi\i}.\]
Applying Cauchy's Differentiation Formula yields
\[ d_k=\left.\frac{1}{k!}\left(\frac{\del}{\del y}\right)^k\left[\frac{ye^{yF(y)}}{e^{yF(y)}-1}[F(y)+yF'(y)]\right]\right|_{y=0}. \]
Thus,
\[1+\sum_{k=1}^{\infty}d_kz^k=\frac{ze^{zF(z)}}{e^{zF(z)}-1}[F(z)+zF'(z)],\]
so
\[1+zG'(z)=\frac{ze^{zF(z)}}{e^{zF(z)}-1}[F(z)+zF'(z)].\]
It is easy to see that this implies $e^{G(z)}=\frac{1}{z}\left(e^{zF(z)}-1\right)$, completing the proof of Theorem \ref{thm:DMK}.
\end{proof}

\section{Quantized Markov-Krein Correspondence} \label{sec:bijection_proof}
In this section, we prove \Cref{thm:BQMK} and heuristically show that \Cref{thm:BMK} can be recovered from a semiclassical limit of \Cref{thm:BQMK}.

\subsection{Proof of Theorem \ref{thm:BQMK}}
We will need the following theorem for our proof.
\begin{theorem}[\cite{KREIN}*{Theorem A.6}]
\label{thm:A6}
Let $\cR[a,b]$ be the class of complex functions $F(z)$ that satisfy the following conditions.
\begin{itemize}
    \item $F(z)$ is holomorphic for $\Im z>0$.
    \item $\Im F(z)\ge 0$ for $\Im z>0$.
    \item $F(z)$ is holomorphic and positive in the interval $(-\infty,a)$ and holomorphic and negative in the interval $(b,\infty)$.
\end{itemize}
Then, a function $F(z)$ is in the class $\cR[a,b]$ if and only if there is a nonnegative measure $\psi$ on $[a,b]$ such that
\[F(z)=\int_a^b\frac{d\psi(t)}{t-z}.\]
Furthermore, this representation is unique.
\end{theorem}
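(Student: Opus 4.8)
The plan is to prove the two implications separately; the genuine content is the classical Nevanlinna--Herglotz integral representation, while the three conditions defining $\cR[a,b]$ serve only to pin down its parameters. For the direction where $F(z)=\int_a^b\frac{d\psi(t)}{t-z}$ with $\psi$ a finite, nonnegative, nonzero measure on $[a,b]$, I would verify the three conditions directly: differentiating under the integral shows $F$ is holomorphic on $\C\setminus[a,b]$; from $\frac{1}{t-z}=\frac{t-\bar z}{|t-z|^2}$ one gets $\Im F(z)=(\Im z)\int_a^b\frac{d\psi(t)}{|t-z|^2}\ge 0$ for $\Im z>0$; and for real $x$ the integrand $\frac{1}{t-x}$ is positive on $[a,b]$ when $x<a$ and negative when $x>b$, giving $F>0$ on $(-\infty,a)$ and $F<0$ on $(b,\infty)$.

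For the converse, take $F\in\cR[a,b]$. The first step is Schwarz reflection: since $F$ is holomorphic and real-valued on $(-\infty,a)$ and on $(b,\infty)$, it extends to a holomorphic function on $\C\setminus[a,b]$ satisfying $F(\bar z)=\overline{F(z)}$, so $\Im F\le 0$ on the lower half-plane. Together with $\Im F\ge 0$ on the upper half-plane, $F$ belongs to the Nevanlinna (Herglotz--Pick) class and hence admits the representation
\[F(z)=\alpha+\beta z+\int_\R\frac{1+tz}{t-z}\,d\nu(t),\]
with $\alpha\in\R$, $\beta\ge 0$, and $\nu$ a finite positive Borel measure recovered from $F$ by the Stieltjes--Perron inversion formula. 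Since $F$ is holomorphic --- hence has vanishing boundary imaginary part --- throughout $(-\infty,a)\cup(b,\infty)$, the inversion formula shows $\nu$ assigns no mass there, i.e.\ $\nu$ is supported on $[a,b]$. Using $\frac{1+tz}{t-z}=\frac{1+t^2}{t-z}-t$ I would rewrite this as $F(z)=c+\beta z+\int_a^b\frac{d\psi(t)}{t-z}$, where $d\psi(t):=(1+t^2)\,d\nu(t)$ is a finite nonnegative measure on $[a,b]$ and $c:=\alpha-\int_a^b t\,d\nu(t)\in\R$.

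It remains to eliminate $\beta$ and $c$ using the sign conditions. The term $\int_a^b\frac{d\psi(t)}{t-z}$ tends to $0$ as $z\to\infty$ and is, for real argument, $\ge 0$ below $a$ and $\le 0$ above $b$. If $\beta>0$ then $F(x)\to+\infty$ as $x\to+\infty$, contradicting $F<0$ on $(b,\infty)$, so $\beta=0$; then $F(x)\to c$ as $x\to\pm\infty$ along $\R$, and $F<0$ on $(b,\infty)$ forces $c\le 0$ while $F>0$ on $(-\infty,a)$ forces $c\ge 0$, hence $c=0$ and $F(z)=\int_a^b\frac{d\psi(t)}{t-z}$. (The case $\psi\equiv 0$, i.e.\ $F\equiv 0$, is excluded by the strict positivity/negativity conditions.) For uniqueness, expanding at infinity gives $F(z)=-\sum_{k\ge 0}z^{-k-1}\int_a^b t^k\,d\psi(t)$, so $F$ determines every moment of $\psi$, and a finite measure on the compact interval $[a,b]$ is determined by its moments; equivalently, Stieltjes--Perron inversion recovers $\psi$ directly from the boundary behavior of $\Im F$.

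The main obstacle, and the only place requiring real analysis, is the Nevanlinna--Herglotz representation together with the Stieltjes--Perron inversion formula: these supply the measure $\nu$ and its localization to $[a,b]$, and everything after is bookkeeping dictated by the sign conditions. If one wanted a self-contained proof that bypasses these theorems, the route would be Schwarz reflection followed by a Cauchy-integral contour collapsed onto $[a,b]$; the delicate point then becomes extracting the decay $F(z)=O(|z|^{-1})$ at infinity from the sign conditions (a Phragm\'en--Lindel\"of-type estimate), so that the outer contour drops out and the jump $F(x-\i 0)-F(x+\i 0)=-2\i\,\Im F(x+\i 0)$ assembles into a genuine finite nonnegative measure on $[a,b]$.
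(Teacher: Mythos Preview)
The paper does not supply its own proof of this statement: Theorem~\ref{thm:A6} is quoted from \cite{KREIN}*{Theorem A.6} and used as a black box in the proof of Theorem~\ref{thm:BQMK1}. Your proposal is a correct sketch of the standard argument---Nevanlinna--Herglotz representation plus Stieltjes--Perron inversion to localize the support, followed by elimination of the affine part via the sign conditions on $(-\infty,a)$ and $(b,\infty)$---and would serve perfectly well as a self-contained proof if one were desired.
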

The proof of Theorem \ref{thm:BMK} can be found in \cite{KEROV}. We present the proof of Theorem \ref{thm:BQMK}, which is similar. The following simpler theorem can be used to prove Theorem \ref{thm:BQMK}.
\begin{theorem}
\label{thm:BQMK1}
There is a bijective correspondence between $\ctM[a,b]$ and $\{\mu\in\cM[a,b]:R_\mu(u)>-1\text{ for }u<a\}$ where $\psi\leftrightarrow\mu$ if and only if
\[R_\psi(u)=R_\mu(u).\]
\end{theorem}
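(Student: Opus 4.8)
The plan is to establish Theorem \ref{thm:BQMK1} first, since the claimed correspondence between $\ctM[a,b]$ and $\ctD[a,b]$ will follow by composing it with the classical Markov-Krein bijection of Theorem \ref{thm:BMK}. Indeed, once Theorem \ref{thm:BQMK1} is known, a measure $\psi \in \ctM[a,b]$ corresponds to a unique $\mu \in \cM[a,b]$ with $R_\psi(u) = R_\mu(u)$ and $R_\mu(u) > -1$ for $u < a$; by Theorem \ref{thm:BMK} this $\mu$ corresponds to a unique $w \in \cD[a,b]$ with $R_\mu(u) = R_w(u)$; and the condition $R_\mu(u) > -1$ outside $[a,b]$ translates precisely into $w \in \ctD[a,b]$. (One should check that $R_\mu(u) > -1$ automatically holds for $u > b$ because $R_\mu(u) < 0$ there, so the only real constraint is on $(-\infty, a)$, consistent with the definition of $\ctD$.) The equality of generating-function identities $R_\psi = R_\mu = R_w$ then gives the stated relation between $\psi$ and $w$, and the reversibility of each arrow gives bijectivity.

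So the core is Theorem \ref{thm:BQMK1}. I would build the map $\psi \mapsto \mu$ directly from the relation $R_\psi(u) = R_\mu(u)$, i.e.
\[
R_\mu(u) = -1 + \exp\!\left(\int_a^b \frac{d\psi(t)}{u-t}\right).
\]
Given $\psi \in \ctM[a,b]$, set $\Phi(u) = \int_a^b \frac{d\psi(t)}{u-t}$; this is a function holomorphic off $[a,b]$, with $\Phi(u) \to 0$ as $u \to \infty$, positive for $u > b$, and satisfying $\Im \Phi(u) < 0$ for $\Im u > 0$ (it lies in the negative of the class $\cR[a,b]$ from Theorem \ref{thm:A6}). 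I then need to show that $R_\mu(u) := -1 + e^{\Phi(u)}$ is the $R$-function (Cauchy transform) of a genuine probability measure $\mu$ on $[a,b]$. To do this I would verify that $-R_\mu(u)$ belongs to the Krein--Nudelman class $\cR[a,b]$: it is holomorphic for $\Im u > 0$; it has nonnegative imaginary part there because $e^{\Phi}$ maps the upper half plane into the lower half plane (as $\Im \Phi \le 0$ there and $|e^{\Phi}| \le 1$ when $\Re \Phi \le 0$ near infinity — more precisely one argues via the explicit form $\Im e^{\Phi} = e^{\Re\Phi}\sin(\Im\Phi) \le 0$), so $\Im(-R_\mu) = -\Im e^{\Phi} \ge 0$; and on $(-\infty,a)$, $\Phi$ is real so $-R_\mu = 1 - e^{\Phi}$, which is positive exactly when $\Phi(u) < 0$. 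The hypothesis $\psi \in \ctM[a,b]$, i.e.\ density bounded by $1$, is what forces $\Phi(u) < 0$ on $(-\infty,a)$: writing $\Phi(u) = \int_a^b \frac{\psi'(t)\,dt}{u-t}$ with $0 \le \psi' \le 1$ and $u < a$, one compares with $\int_a^b \frac{dt}{u-t} = \log\frac{u-b}{u-a} < 0$, so $\Phi(u) \le \log\frac{u-b}{u-a} < 0$, giving $R_\mu(u) \le \frac{u-b}{u-a} - 1 = \frac{a-b}{u-a}$ and in particular $R_\mu(u) > -1$ as well as $R_\mu(u) < 0$. By Theorem \ref{thm:A6}, $-R_\mu = \int_a^b \frac{d\mu(t)}{t-u}$ for a unique nonnegative measure $\mu$ on $[a,b]$; the normalization $\mu_0 = \mu([a,b]) = 1$ follows from the asymptotics $R_\mu(u) \sim \Phi(u) \sim 1/u$ as $u \to \infty$ (since $\psi_0 = 1$), so $\mu \in \cM[a,b]$, and we have just seen $R_\mu(u) > -1$ for $u < a$.

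For injectivity and surjectivity of $\psi \mapsto \mu$, I would run the argument in reverse: given $\mu \in \cM[a,b]$ with $R_\mu(u) > -1$ for $u < a$, define $\Psi(u) = \log(1 + R_\mu(u))$; the condition $R_\mu > -1$ on $(-\infty,a)$ together with $R_\mu < 0$ on $(b,\infty)$ and the mapping properties of $R_\mu$ (which is in $-\cR[a,b]$) guarantee that $1 + R_\mu$ is a well-defined nonvanishing holomorphic function off $[a,b]$ with positive real part in a neighborhood of each real point outside $[a,b]$ and with $\Im$ of the appropriate sign, so a single-valued branch of the logarithm exists and $\Psi \in -\cR[a,b]$ in the sense needed; then $-\Psi = \int_a^b \frac{d\psi(t)}{t-u}$ for a unique nonnegative $\psi$ on $[a,b]$ by Theorem \ref{thm:A6}, with $\psi_0 = 1$ from the $1/u$ asymptotics. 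The bound $R_\mu(u) \le \frac{a-b}{u-a}$ for $u<a$, which is forced by $R_\mu > -1$ and a monotonicity/Herglotz-representation argument, then translates back into $\psi' \le 1$ a.e., i.e.\ $\psi \in \ctM[a,b]$. The two constructions are mutually inverse because in both directions the defining relation is the single functional equation $1 + R_\mu(u) = \exp\big(\int_a^b \frac{d\psi(t)}{u-t}\big)$, and uniqueness of the representing measure in Theorem \ref{thm:A6} pins down each side from the other.

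The main obstacle I anticipate is the careful verification of the class membership $-R_\mu \in \cR[a,b]$ — in particular the sign of $\Im e^{\Phi(u)}$ on the upper half-plane and the equivalence between the pointwise bound "$R_\mu(u) > -1$ for $u<a$" and the density bound "$\psi' \le 1$ a.e."\ This latter equivalence is the crux: one direction is the comparison estimate above, but the converse (deducing $\psi' \le 1$ from $R_\mu > -1$) requires extracting the density of $\psi$ from boundary values of $\Phi = \log(1+R_\mu)$ and showing that $R_\mu > -1$ near $a$ prevents $\psi'$ from exceeding $1$; this is where a Stieltjes-inversion / boundary-value analysis near the endpoint $a$ is needed, and it is the step I would write out most carefully. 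Everything else — the $1/u$ asymptotics giving total mass $1$, holomorphy off $[a,b]$, and the composition with Theorem \ref{thm:BMK} — is routine.
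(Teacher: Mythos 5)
Your proposal is essentially the paper's proof: reduce \Cref{thm:BQMK} to \Cref{thm:BQMK1} by composing with \Cref{thm:BMK}, and prove \Cref{thm:BQMK1} in both directions via the Krein--Nudelman representation (\Cref{thm:A6}) applied to $G(u)=\int_a^b\frac{d\psi(t)}{t-u}$ and its exponential. The forward direction is handled the same way: you verify that $\Im e^{\Phi}\le 0$ on the upper half-plane so that $-R_\mu\in\cR[a,b]$, which is exactly the paper's check that $F(u)=1-e^{-G(u)}$ satisfies the hypotheses of \Cref{thm:A6}.

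Where you diverge, though, is in what you flag as the ``crux.'' You anticipate a delicate ``Stieltjes-inversion / boundary-value analysis near the endpoint $a$'' to convert $R_\mu>-1$ on $(-\infty,a)$ into $\psi'\le 1$, and you assert that $R_\mu>-1$ ``forces'' the bound $R_\mu(u)\le\frac{a-b}{u-a}$ via ``a monotonicity/Herglotz-representation argument.'' Neither of these is how the paper argues, and the latter claim is dubious as stated. The paper's argument is uniform, not localized near $a$: the entire content is the two-sided bound $0\le\Im G(u)<\pi$ for $\Im u>0$. In the forward direction this follows from $0\le\psi'\le 1$ and the Poisson-kernel comparison $\int_{-\infty}^\infty\frac{\eta\,dt}{(t-\xi)^2+\eta^2}=\pi$. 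In the reverse direction, $F=-R_\mu$ is Herglotz with $\Im F\ge 0$ in the upper half-plane and $F<1$ on $\R\setminus[a,b]$, so $1-F$ maps the upper half-plane into $\{\Im\le 0\}$ and is positive on $\R\setminus[a,b]$; hence $G=-\log(1-F)$ is single-valued with $0\le\Im G<\pi$ throughout the upper half-plane, and Cauchy--Stieltjes inversion then gives $\psi'\le 1$ everywhere on $[a,b]$, not just near $a$. You do gesture at this via ``the mapping properties of $R_\mu$,'' but the auxiliary claimed bound $R_\mu(u)\le\frac{a-b}{u-a}$ is both unnecessary and not an obvious consequence of $R_\mu>-1$; you should drop it and replace the endpoint analysis with the uniform $\Im G<\pi$ argument, which closes the gap cleanly.
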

Note that Theorems \ref{thm:BMK} and \ref{thm:BQMK1} immediately imply Theorem \ref{thm:BQMK}.

\subsubsection{Proof of $\psi\to\mu$}
Suppose we're given $\psi\in\ctM[a,b]$. Note that for $\Im u\ge 0$,
\[0\le\Im\int_a^b\frac{d\psi(t)}{t-u}<\int_{-\infty}^{\infty}\frac{\eta dt}{(t-\xi)^2+\eta^2}=\pi,\]
as $0\le\psi'(t)\le 1$. To show that we get a unique $\mu$ satisfying $R_\mu(u)=R_\psi(u)$, it suffices to show that
\[F(u):=1-\exp\left(-G(u)\right)\]
satisfies the conditions of Theorem \ref{thm:A6}, where
\[G(u):=\int_a^b\frac{d\psi(t)}{t-u}\]
is a function satisfying the conditions of Theorem \ref{thm:A6}, along with the restriction
\[0\le \Im G(u)<\pi\]
for $\Im u\ge 0$. Indeed, this is not hard to verify. Now,
\[R_\mu(u)=\exp\left(-G(u)\right)-1>-1,\]
so the proof of this direction is complete.

\subsubsection{Proof of $\mu\to\psi$}
Suppose we're given $\mu\in\{\mu\in\cM[a,b]:R_\mu(u)>-1\text{ for }u<a\}$. Let
\[-R_\mu(u)=F(u)=\int_a^b\frac{d\mu(t)}{t-u}.\]
Define
\[G(u)=-\log(1-F(u)),\]
which exists since $F(u)<1$ whenever $F(u)$ is real. The conditions of Theorem \ref{thm:A6} are satisfied, so there exists a unique measure $\psi\in\cM[a,b]$ such that
\[G(u)=\int_a^b\frac{d\psi(t)}{t-u}.\]
Since 
\[0\le \Im G(u)<\pi,\]
we have $\psi'(t)\le 1$ (by the Cauchy-Stieljis inversion formula), so we have $\psi\in\ctM[a,b]$, as desired. This completes the proof of Theorem \ref{thm:BQMK1}, and thus Theorem \ref{thm:BQMK}.

\subsection{Semiclassical Limit}
\label{ssec:semiclassical}
In fact, the Markov-Krein correspondence (\Cref{thm:BMK}) can be obtained with a semiclassical limit of the quantized Markov-Krein correspondence (\Cref{thm:BQMK}).

Specifically, for measures $\mu\in \cM[a,b]$ and $w\in\cD[a,b]$, it's necessary for the Markov-Krein correspondence to be a bijection. To proceed, let a sequence of probability measures $\psi_\ve$ satisfy
\[\psi_\ve\in \ctM[a/\ve, b/\ve]\]
for all $\ve > 0$. Define the sequence of probability measures $\hat{\mu}_\ve$ as $d\hat{\mu}_\ve(t) = d\psi_\ve(t/\ve)$, so that $\hat{\mu}_\ve\in\cM[a, b]$ and has density bounded by $1/\ve$. Define $w_\ve\in\ctD[a/\ve, b/\ve]$  to be the corresponding measures for $\psi_\ve$ and $\hat{w}_\ve\in\cD[a,b]$ as the measure with $d\hat{w}_\ve(t) = d w_\ve(t/\ve)$.

Under the limit $\ve\rightarrow 0,$ construct the $\psi_\ve$ so that the $\hat{\mu}_\ve$ converge to $\mu$; observe that the $\hat{w}_\ve$ converge to some $w$. Alternatively, we could have started by constructing the $w_\ve$ so that the $\hat{w}_\ve$ converge to some $w$, and determine the $\psi_\ve$ from the $w_\ve$. Now, each $\psi_\ve$ satisfies the quantized Markov-Krein correspondence, or 
\begin{equation}
\label{eq:QMK_eps1}
    \frac{1}{z}\left(-1+\exp\left(z\sum_{k=0}^\infty \psi_{\ve, k} z^k\right)\right)=\exp\left(\sum_{k=1}^\infty\frac{p_k(w_\ve)}{k}z^k\right).
\end{equation}
From setting the variable $z$ as $\ve z$ in (\ref{eq:QMK_eps1}), we know that
\begin{equation}
\label{eq:QMK_eps2}
\frac{1}{\ve z}\left(-1+\exp\left(\ve z\sum_{k=0}^\infty \psi_{\ve, k} (\ve z)^k\right)\right)=\exp\left(\sum_{k=1}^\infty\frac{p_k(w_{\ve})}{k}(\ve z)^k\right).
\end{equation}
Next, to simplify the expression define $F_{\ve}(z)$ as
\[F_{\ve}(z)= \sum_{k=0}^\infty \psi_{\ve, k} (\ve z)^k.\]
Also, note that $\ve^k\psi_{\ve, k}=\hat{\mu}_{\ve, k}$ and $\ve^k p_k(w_{\ve})=p_k(\hat{w}_{\ve}).$
The first implies that
\[ F_\ve(z)=\sum_{k=0}^\infty\psi_{\ve, k} \left(\ve z\right)^k = \sum_{k=0}^\infty\hat{\mu}_{\ve, k} z^k \to \sum_{k=0}^\infty\mu_k z^k,\]
and the second implies that
\[\sum_{k=1}^\infty\frac{p_k(w_{\ve})}{k}(\ve z)^k= \sum_{k=1}^\infty\frac{p_k(\hat{w}_{\ve})}{k} z^k \to \sum_{k=1}^\infty\frac{p_k(w)}{k} z^k,\]
under the limit $\ve\rightarrow 0$. 
Now, we use the Taylor Series expansion to obtain 
\[\frac{1}{\ve z}\left(-1+\exp(\ve zF_{\ve}(z))\right) = F_{\ve}(z)+\sum_{k=2}^\infty \frac{(\ve z)^{k-1}F_{\ve}(z)^k}{k!}\rightarrow F_{\ve}(z)\]
with the limit $\ve\rightarrow 0$. Here, as $F_\ve(z)$ becomes a holomorphic function, the terms with powers of $\ve$ at least one are removed. Therefore, (\ref{eq:QMK_eps2}) becomes, under the limit,
\[F_\ve(z)=\exp\left(\sum_{k=1}^\infty\frac{p_k(w_{\ve})}{k}(\ve z)^k\right)=\exp\left(\sum_{k=1}^\infty\frac{p_k(\hat{w}_{\ve})}{k} z^k\right),\]
or
\[
\sum_{k=0}^\infty\mu_k z^k= \exp\left( \sum_{k=1}^\infty\frac{p_k(w)}{k}z^k\right),
\]
which is the Markov-Krein correspondence. 

Assume that there exist two measures $w_1, w_2\in\cD[a,b]$ satisfying the Markov-Krein correspondence. Then, the $p_k(w_1)=p_k(w_2)$ for all positive integers $k$. But, since $w_1''$ and $w_2''$ are supported on $[a,b]$, \[\sum_{k=1}^\infty\frac{p_k(w)}{k}z^k\]
would have a positive radius of convergence for $w=w_1$ and $w=w_2$. Therefore, $w_1''=w_2''$ and $w_1=w_2$. From this, there is an unique measure $w$ satisfying the Markov-Krein correspondence for each $\mu$. By constructing the $w_\ve$ first, the other direction can also be shown to hold.

\appendix\section{Example of Quantized Markov-Krein Correspondence}
\label{sec:appendixb}

\begin{figure}[!htbp]
\[\includegraphics[scale=2]{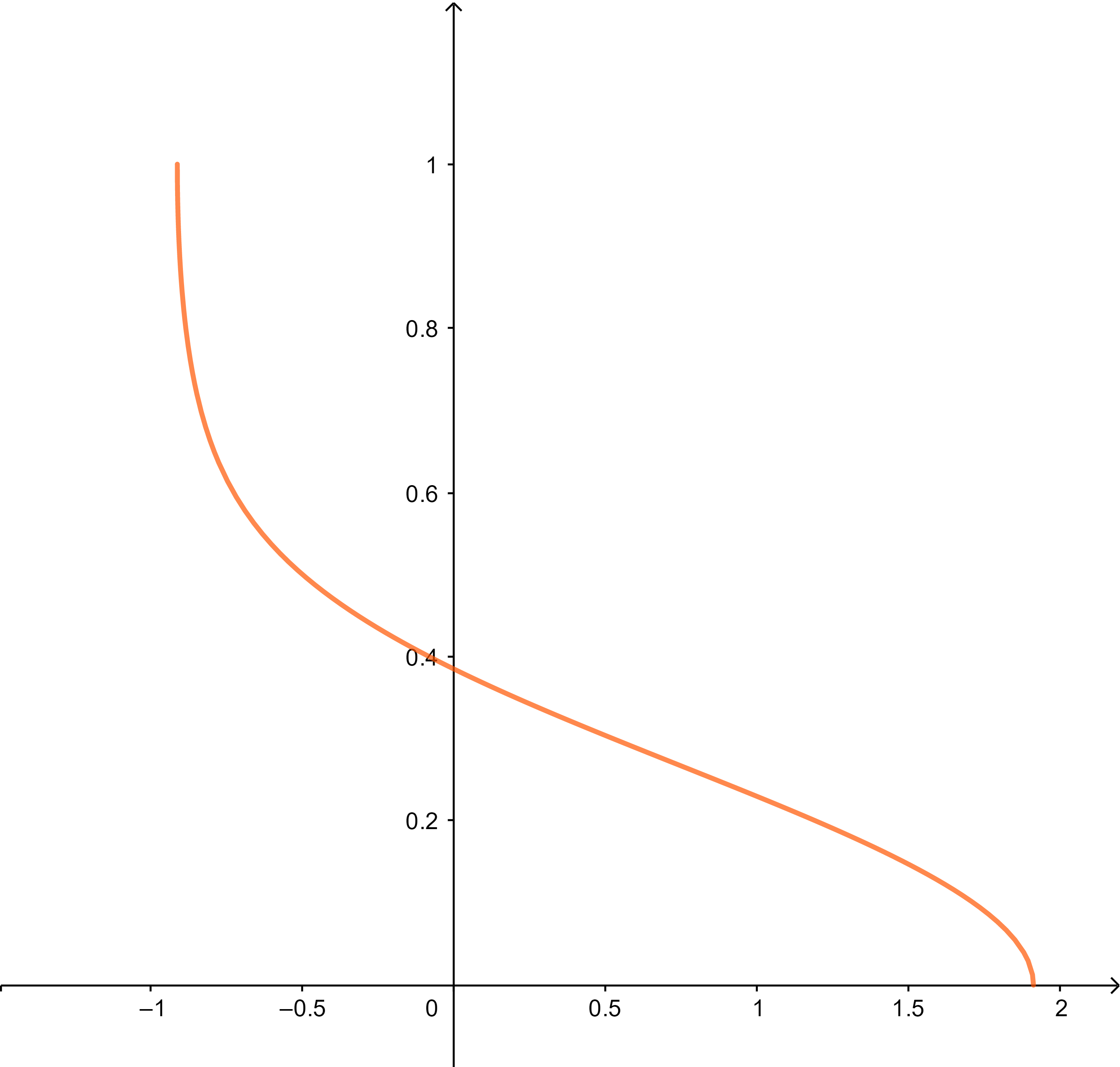}\quad\includegraphics[scale=2]{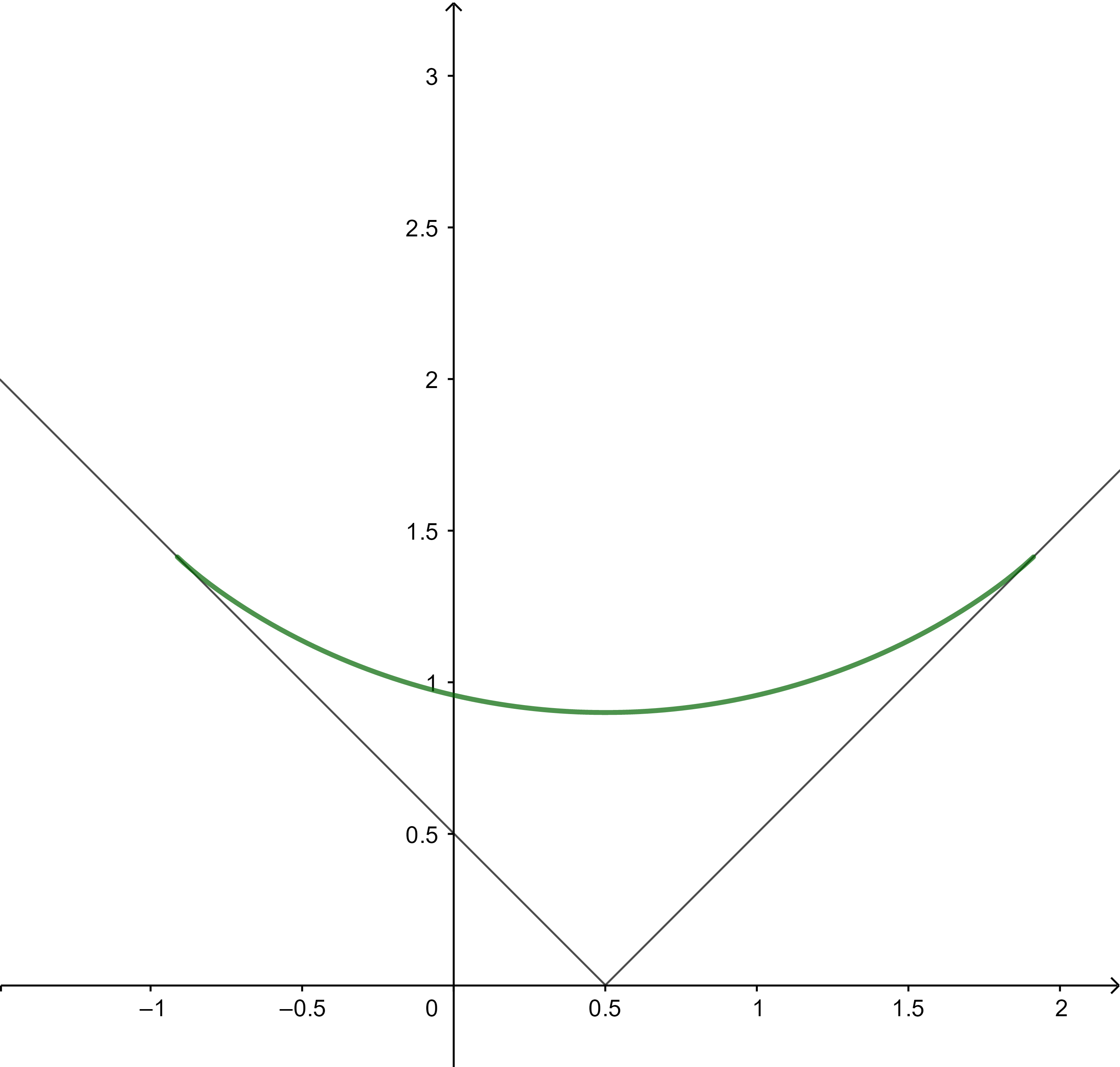}\]
\[\includegraphics[scale=1]{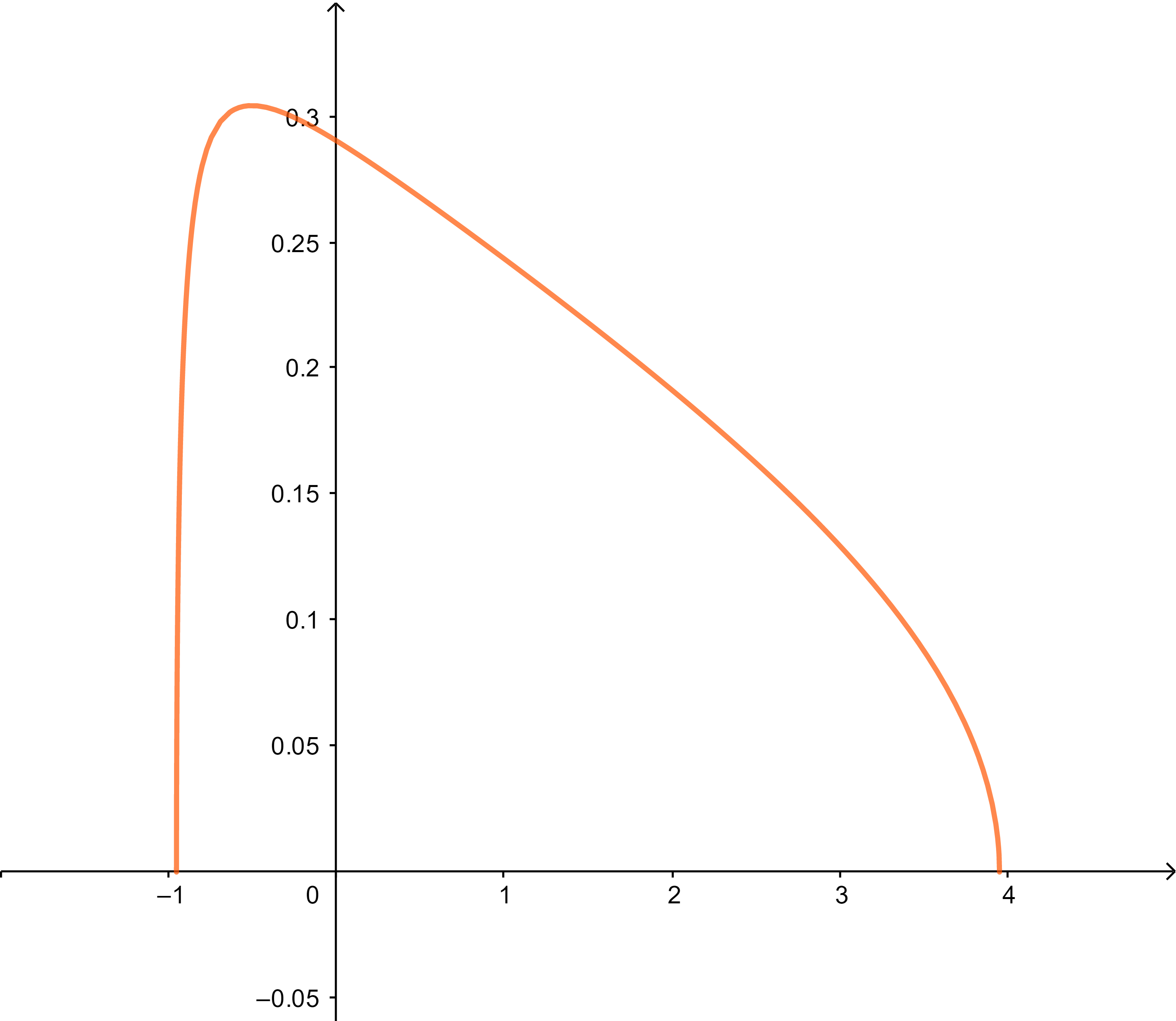}\quad\includegraphics[scale=1]{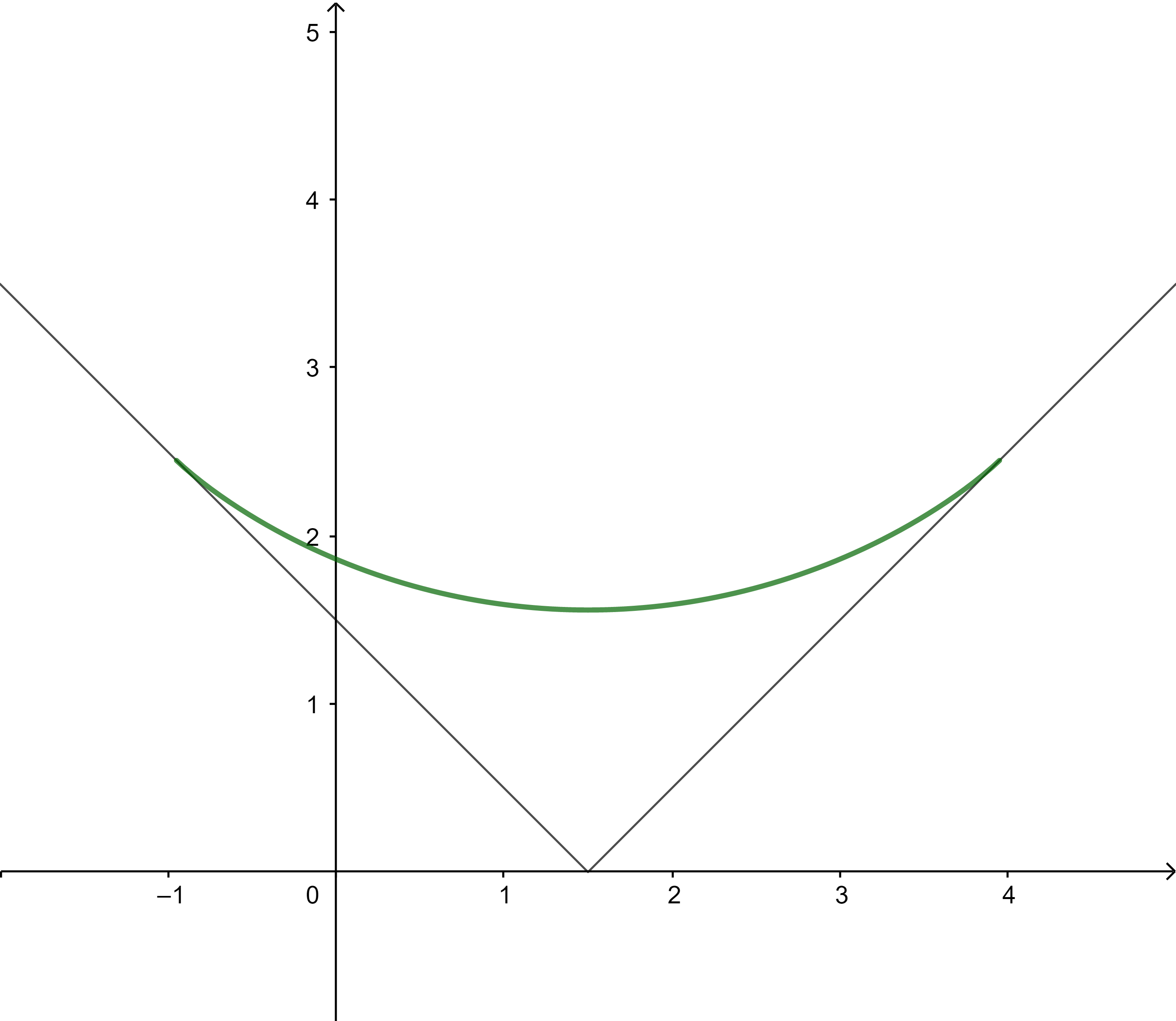}\]
\[\includegraphics[scale=0.8]{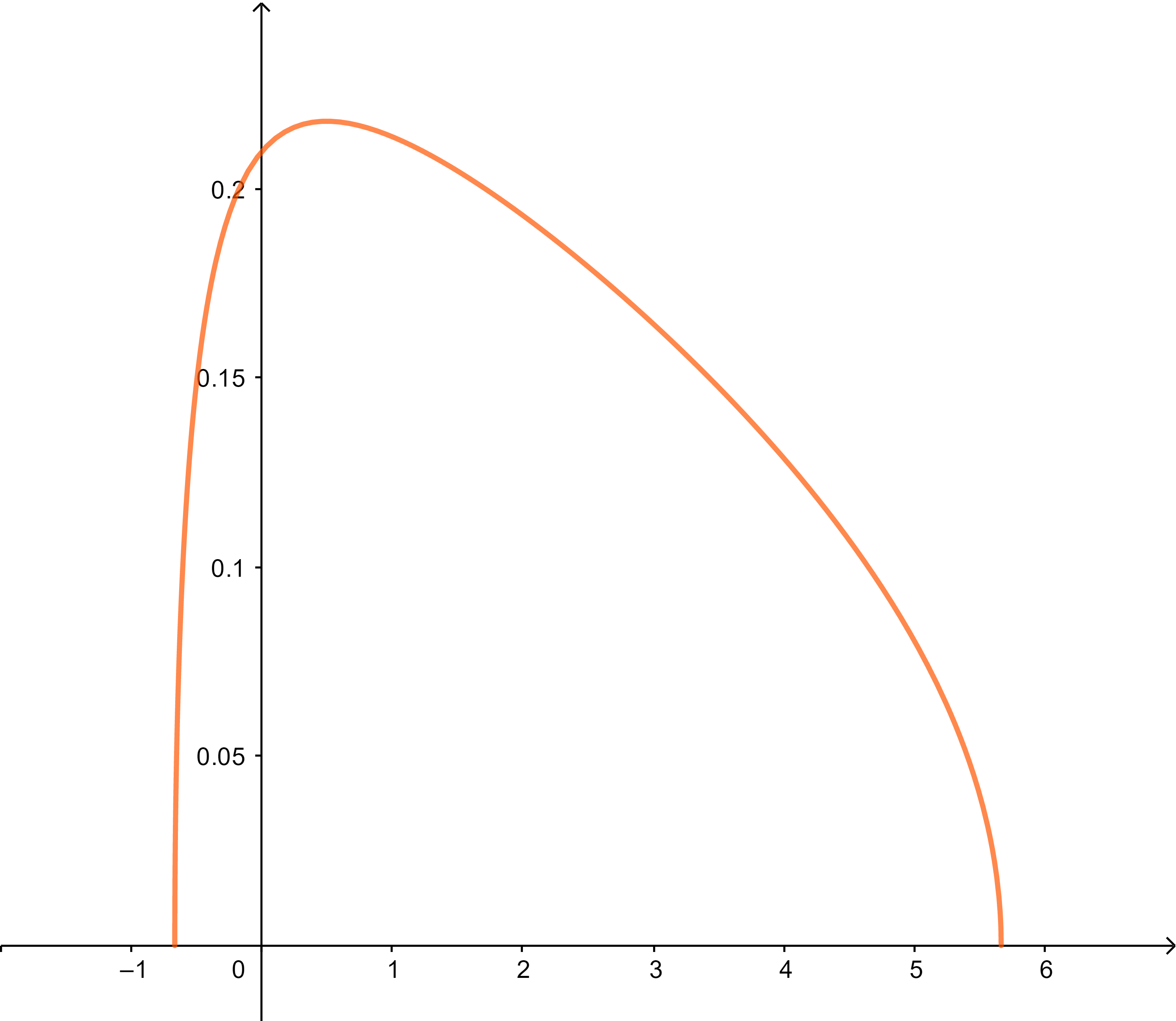}\quad\includegraphics[scale=0.8]{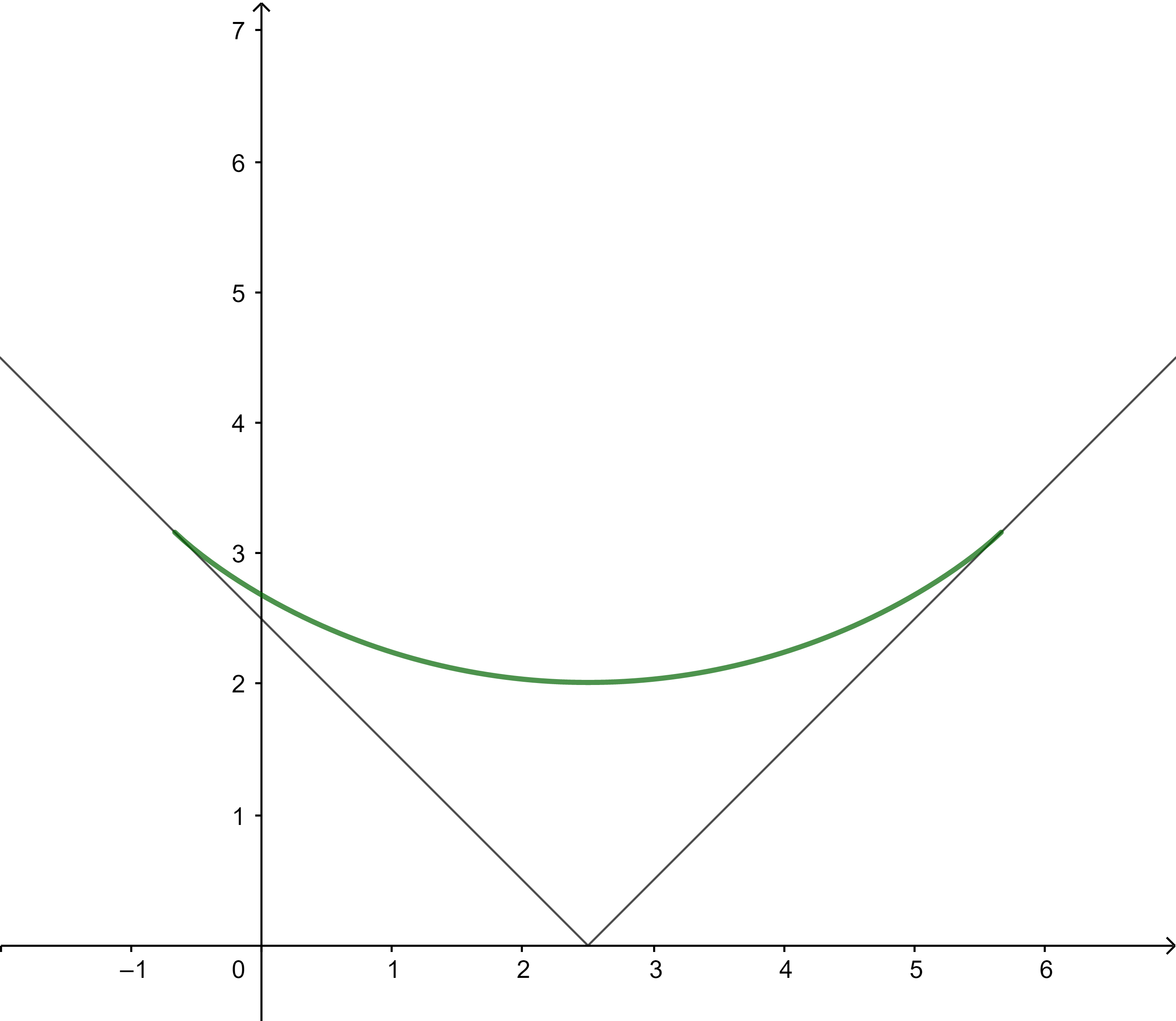}\]
\caption{One-sided Plancherel character limiting measure and corresponding diagram for $\gamma=0.5,1.5,2.5$}
\label{fig:plancherel}
\end{figure}

The semicircle law and the VKLS curve are naturally paired under the Markov-Krein correspondence. In the quantized Makrov-Krein correspondence, as the quantized analgoue of the semicircle law, we can let the measure $\psi\in\ctM[a,b]$ be the \textit{one-sided Plancherel character} with parameter $\gamma>0$ (see \cite{BBO}*{Appendix A}), and find the corresponding diagram $w\in\ctD[a,b]$ under \Cref{thm:BQMK}.

The density of $\psi$ is given by
\[\psi(x) = \frac{1}{\pi}\arccos\frac{x+\gamma}{2\sqrt{\gamma(x+1)}}\quad\text{for }x\in[\gamma-2\sqrt{\gamma},\gamma+2\sqrt{\gamma}].\]
After computations, we obtain that the corresponding diagram is given by
\[w(x) = \frac{2}{\pi}\left((x-\gamma)\arcsin\left(\frac{x-\gamma}{2\sqrt{\gamma}}\right)+\sqrt{4\gamma-(x-\gamma)^2}\right).\]
Note that this is a shifted and scaled version of the VKLS curve. There are examples of the one-sided Plancherel character and $w(x)$ in Figure \ref{fig:plancherel}.

\newpage
\bibliography{mybib.bib}

\end{document}